\newtheorem{theorem}{Theorem}[section]
\newtheorem{lemma}{Lemma}[section]
\newtheorem{proposition}{Proposition}[section]
\newtheorem{definition}{Definition}[section]
\newtheorem{remark}{Remark}[section]
\begin{document}

\title[Global Weak Solutions to Non-isothermal Nematic Liquid Crystals in 2D]{Global Weak Solutions to Non-isothermal Nematic Liquid Crystals in 2D}
\author{Jinkai Li}
\address[Jinkai Li]{The Institute of Mathematical Sciences, The Chinese University of Hong Kong, Hong Kong}
\email{jklimath@gmail.com}

\author{Zhouping Xin}
\address[Zhouping Xin]{The Institute of Mathematical Sciences, The Chinese University of Hong Kong, Hong Kong}
\email{zpxin@ims.cuhk.edu.hk}

\keywords{global weak solutions; non-isothermal; nematic liquid crystals.}


\begin{abstract}
In this paper, we prove the global existence of weak solutions to the non-isothermal nematic liquid crystal system on $\mathbb T^2$, based on a new approximate system which is different from the classical Ginzburg-Landau approximation. Local energy inequalities are employed to recover the estimates on the second order spacial derivatives of the director fields locally in time, which cannot be derived from the basic energy balance. It is shown that these weak solutions
conserve the total energy and while the kinetic and potential energies transfer to the heat energy precisely. Furthermore, it is also established that these weak solutions have at most finite many singular times at which the energy concentration occurs, and as a result, the temperature must increase suddenly at each singular time on some part of $\mathbb T^2$.
\end{abstract}

\maketitle

\section{Introduction and main results}\label{T1sec1}

The evolution of liquid crystals in $\Omega\subseteq\mathbb R^d$ is described by the following system
\begin{eqnarray}
&&u_t+(u\cdot\nabla)u+\nabla
p=\textmd{div}(S+\sigma^{nd}),\quad\textmd{div}u=0,\label{T11.1}\\
&&d_t+(u\cdot\nabla )d=\Delta d+|\nabla d|^2d,\quad|d|=1,\label{T11.2}\\
&& \theta_t+u\nabla\theta=\Delta\theta+S:\nabla u+|\Delta d+|\nabla d|^2d|^2,\label{T11.3}
\end{eqnarray}
where $u$ represents the velocity field of the flow, $d$ is the unit vector field that represents the macroscopic molecular orientation of the
liquid crystal material, $\theta$ is the absolute temperature, and $p$ denotes the pressure function. The notations $S$ and $\sigma^{\textmd{nd}}$
denote the dissipative and nondissipative part of the stress, respectively, given by
$$
S=\mu(\theta)(\nabla u+\nabla u^T),\qquad \sigma^{\textmd{nd}}=-\nabla d\odot\nabla d,
$$
where $\nabla u^T$ is the transform of $\nabla u$ and $\nabla d\odot\nabla d$ is a $d\times d$ matrix whose $(i, j)$-th entry is $\partial_id
\cdot\partial_jd$, $1\leq i, j\leq d$.

System (\ref{T11.1})--(\ref{T11.3}) becomes the non-isothermal model introduced by Feireisl-Rocca-Schimperna \cite{Feireisl1} and Feireisl-Fremond-Rocca-Schimperna \cite{Feireisl2},
where the term $|\nabla d|^2d$ is replaced by some penalty function $f(d)$ to relax the constraint $|d|=1$. The isothermal case was proposed by Lin
in \cite{Lin1} and later analyzed by Lin-Liu in \cite{Lin2}, where the penalty function $f(d)$ is considered instead of $|\nabla d|^2d$. The model
proposed in \cite{Lin1,Lin2} is a considerably simplified version of the famous Leslie-Ericksen model introduced by Ericksen \cite{Ericksen} and
Leslie \cite{Leslie} in the 1960's. Global existence of weak solutions to the system with penalty term $f(d)$ instead of $|\nabla d|^2d$ are proven
in \cite{Feireisl1,Feireisl2} and \cite{Lin2}, but the regularity and uniqueness of weak solutions is still open, at least for three dimensional case,
see Lin-Liu \cite{Lin3} for the partial regularity results. When the density of the liquid crystals is taken into consideration, the global
existence of weak solutions can be obtained in the similar way, see Jiang-Tan \cite{Jiang1} and Liu-Zhang \cite{Liu1} for the incompressible
model, see Liu-Hao \cite{Liu2} and Wang-Yu \cite{Wang} for the compressible model. In \cite{Jiang1,Liu1,Liu2,Wang}, they still consider the
penalty term $f(d)$ instead of $|\nabla d|^2d$.

Physically, $|\nabla d|^2d$ is preferred to  $f(d)$ for liquid crystal motions, and the director field $d$ must satisfy
the natural constraint $|d|=1$, see \cite{Ericksen} and \cite{Leslie}. Compared with the system with the penalty term $f(d)$, system with term $|\nabla d|^2d$ becomes more complicated technically,
due to the lack of the a priori estimates on the second order derivatives of $d$. As far as we know,  only a few papers have dealt the global existence of weak solutions to the liquid crystal system with term $|\nabla d|^2d$, see Lin-Lin-Wang \cite{Lin4}, Hong \cite{Hong1},
Hong-Xin \cite{Hong2} and two very recent papers by Wang-Wang \cite{WW} and Huang-Lin-Wang \cite{HLW}, and all of these
studies deal with the isothermal model and depend crucially on the key
idea of using the local type energy inequality, which is originally due to Struwe \cite{Struwe},
to recover the estimates on the second order derivatives of $d$ locally in time. In Lin-Lin-Wang \cite{Lin4}, global existence is proven directly to the liquid crystal
system with term $|\nabla d|^2d$, while in Hong \cite{Hong1} and Hong-Xin \cite{Hong2}, the global existence is proven by using the Ginzburg-Landau approximation.

Motivated by the works of Feireisl-Rocca-Schimperna \cite{Feireisl1} and Feireisl-Fremond-Rocca-Schimperna \cite{Feireisl2}, where the non-isothermal model with the penalty term $f(d)$ is considered, and those
of Lin-Lin-Wang \cite{Lin4}, Hong \cite{Hong1} and
Hong-Xin \cite{Hong2}, where the isothermal model with term $|\nabla d|^2d$ is considered, we consider the non-isothermal model with term $|\nabla d|^2d$
and temperature dependent viscous coefficient, which seems to be a more physical model. However, the analysis of
this model gives several difficulties and it seems hard for us to adopt the approaches of \cite{Lin4,Hong1,Hong2} directly to deal with this case. In fact, in Lin-Lin-Wang \cite{Lin4}, the Campanato space
theory is employed, which requires the H\"older continuity of the coefficients; however, in our case, due to the temperature dependence of the viscous
coefficient and the strong nonlinearity of the square terms appeared in the temperature equation, such continuity requirement is difficult to fulfill. In Hong \cite{Hong1} and Hong-Xin \cite{Hong2}, $L^\infty(H^1)$ type estimate of $(u,\nabla d)$ is used crucially in the isothermal case, which is hard to get due to
the temperature dependence of the viscous coefficient. Therefore, some different approach is needed to
deal with the non-isothermal case, and this paper is devoted to studying this issue.

We are going to prove the global existence of weak solutions to the non-isothermal liquid crystal system (\ref{T11.1})--(\ref{T11.3}) complemented with the following initial and boundary conditions
\begin{eqnarray}
&(u, d, \theta)|_{t=0}=(u_0, d_0, \theta_0), \qquad x\in\Omega=(-\pi, \pi)\times(-\pi, \pi),\label{T11.4}\\
&u, d, \theta \text{ and } p \text{ are }2\pi \text{ periodic with respect to space variables}.\label{T11.5}
\end{eqnarray}

To establish the global existence of weak solutions, we introduce the following elaborate approximation system which will be used throughout this paper
\begin{eqnarray}
&u_t+(u\cdot\nabla)u+\nabla p=\text{div}(S_N-\nabla d\odot\nabla d),\qquad\text{div}u=0,\label{T11.6}\\
&d_t+(u\cdot\nabla)d=\Delta d+\chi_M(|\nabla d|^2)d,\label{T11.7}\\
&\theta_t+u\cdot\nabla\theta=\Delta\theta+S_N:\nabla u+|\Delta d+\chi_M(|\nabla d|^2)d|^2\label{T11.8},
\end{eqnarray}
where
\begin{equation}\label{T11.9}
S_N=\mu(\theta)(\nabla u+\nabla u^T)+\frac{1}{N}|\nabla u|^{\frac{2}{9}}\nabla u
\end{equation}
and $\chi_M$ is a cut off function (see Section \ref{T1sec3} for the exact definition of $\chi_M$).
We note that, different from the Ginzburg-Landau approximate system, this
system is not a singular type approximation of the original system, and thus one can derive
its estimates more easily.
Moreover, the uniform $L^2(H^2)$ estimates on $d$ of this system can be derived
by the local type energy inequality without any use of the $L^\infty(H^3)$ type
energy on $d$, while for the Ginzburg-Landau system, one need the $L^\infty(H^3)$ type
energy to derive the uniform $L^2(H^2)$ estimates, see Hong \cite{Hong1} and Hong-Xin \cite{Hong2} for the details. We also note that $L^2(H^2)$ type estimates on $d$
is enough for us to take the limit of the
approximate system to obatin the weak solutions to the original system.

To state the main result of this paper, we introduce some notations. For $T>0$, set $Q_T=\Omega\times(0, T)$. For $1\leq p\leq \infty$ and $k\in\mathbb N$, $L^p(\Omega)$ is the Lebesgue space, $L^2_\sigma(\Omega)$ is the space consisting of all divergence free functions in $L^2(\Omega)$, and $W^{k,p}(\Omega)$
is the standard Sobolev space. When $p=2$, $H^k(\Omega)$=$W^{k,2}(\Omega)$. We use $W^{k,p}_{\text{per}}(\Omega)$
or $H^k_{\text{per}}(\Omega)$ to denote the spaces consisting of all $2\pi$ periodic functions in $W^{k,p}(\Omega)$ or $H^1(\Omega)$, respectively.
More precisely,
$$
W^{k,p}_{\text{per}}(\Omega)=\left\{u\left|\tilde u\in W^{k,p}_{\text{loc}}(\mathbb R^N), \text{ where }\tilde u\text{ is the } 2\pi \text{ periodic extension of } u\right.\right\}.
$$
The dual space of $W^{1,p}_{\text{per}}(\Omega)$ is denoted by $W_{\text{per}}^{-1,p'}(\Omega)$ for $1\leq p<\infty$, where $\tfrac{1}{p}+\tfrac{1}{p'}=1$.
The same symbol will be used  for function $u$ and its $2\pi$ periodic extension.

The definition of weak solutions to the system (\ref{T11.1})--(\ref{T11.5}) is given by the following definition.

\begin{definition}\label{T1def1.1}
Let $T$ be a finite positive number. The triple $(u, d, \theta)$ is called a weak solution to the system (\ref{T11.1})--(\ref{T11.3}) with (\ref{T11.4}--(\ref{T11.5}) in $Q_T=\Omega\times(0, T)$, if all the following three conditions hold true

(i) $u, d$ and $\theta$ are $2\pi$ periodic, and
\begin{eqnarray*}
&&u\in L^\infty(0, T; L^2(\Omega))\cap L^2(0, T; H^1(\Omega)),\quad \textmd{div} u =0,\\
&&d\in L^\infty(0, T; H^1(\Omega; S^2)),\quad\Delta d+|\nabla d|^2d\in L^2(Q_T),\\
&&\theta\in L^\infty(0, T; L^1(\Omega))\cap L^q(0, T; W^{1, q}(\Omega)), \quad q\in (1, \tfrac{4}{3}),
\end{eqnarray*}

(ii) equation (\ref{T11.2}) is satisfied a.e. in $Q_T$, and $d$ satisfies the initial condition (\ref{T11.4}),

(iii) the following two integral identities hold true
\begin{eqnarray*}
&&\int_0^T\int_\Omega[(S+\sigma^{nd}-u\otimes u):\nabla\varphi-u\varphi_t]dxdt=\int_\Omega u_0\varphi(x,0)dx
\end{eqnarray*}
for any $2\pi$ periodic functions $\varphi\in C^\infty(\overline\Omega\times[0, T))$ with $\text{div}\varphi=0$, and
\begin{align*}
&\int_0^T\int_\Omega\nabla\theta\cdot\nabla\phi dxdt-\int_\Omega\theta_0\phi(x,0)dx\\
=&\int_0^T\int_\Omega\left[\theta\phi_t+\left(S:\nabla u+|\Delta d+|\nabla d|^2d|^2-u\cdot\nabla\theta\right)\phi\right]dxdt
\end{align*}
for any $2\pi$ periodic functions $\phi\in C^\infty(\overline\Omega\times[0, T))$.
\end{definition}

Though out this paper, it is always assumed that the coefficient $\mu$ satisfies
\begin{equation}\label{T11.10}
\mu(\theta) \text{ is continuous in }\theta,\quad\underline\mu\leq\mu(\theta)\leq\overline\mu
\end{equation}
for two positive constants $\underline\mu$ and $\overline\mu$.

The main result of the present paper is the following theorem.

\begin{theorem}\label{T1them1.2}
Assume that (\ref{T11.10}) holds true, and that the $2\pi$ periodic functions $u_0, d_0, \theta_0$ satisfy
$$
u_0\in L^2_\sigma(\Omega),\quad d_0\in H^1(\Omega;S^2),\quad \theta_0\in L^1(\Omega),\quad\inf_{x\in\Omega}\theta_0(x)\geq{\underline{\theta}}_0
$$
for some positive constant $\underline{\theta}_0$.

Then there is a weak solution to the system (\ref{T11.1})--(\ref{T11.5}) in $Q_T$ for any finite $T$, such that
$$
\int_\Omega\left(\frac{|u(t)|^2}{2}+\frac{|\nabla d(t)|^2}{2}+\theta(t)\right)dx=\int_\Omega\left(\frac{|u_0|^2}{2}+\frac{|\nabla d_0|^2}{2})+\theta_0\right)dx
$$
for any $t\in[0, T]$, and the following entropy inequality holds true
$$
\partial_t\theta^\alpha+\textmd{div}(u\theta^\alpha)\geq\Delta\theta^\alpha+\alpha\theta^{\alpha-1}(S:\nabla u+|\Delta d+|\nabla d|^2d|^2)+\alpha(1-\alpha)\theta^{\alpha-2}|\nabla\theta|^2
$$
in $\mathcal D'(Q_T)$ for any $\alpha\in(0, 1)$.

Moreover, there are at most finite many "singular" times $0<T_1<\cdots<T_N<\infty$ characterized by
$$
\sup_{(x, t)\in\overline{Q_{T_i}}}\int_{B_r(x)}(|u(t)|^2+|\nabla d(t)|^2)dy\geq\varepsilon_0^2,\quad 1\leq i\leq N,\quad\forall r>0,
$$
where $\varepsilon_0$ is a positive constant depending only on $\int_\Omega(|u_0|^2+|\nabla d_0|^2)dx$,
such that
\begin{align*}
\int_\Omega(|u(T_i)|^2+|\nabla d(T_i)|^2)dx=&2\int_{T_i}^t\int_\Omega(S:\nabla u+|\Delta d+|\nabla d|^2d|^2)dxds\\
&+\int_\Omega(|u(t)|^2+|\nabla d(t)|^2)dx
\end{align*}
for all $t\in[T_i, T_{i+1}), 0\leq i\leq N$ (here we have set $T_0=0$ and $T_{N+1}=\infty$), and
\begin{eqnarray*}
&&d\in L^2(T_i, T; H^2(\Omega))\cap C^{\beta,\beta/2}(\overline\Omega\times(T_i, T_{i+1}),\quad p\in L^2(\Omega\times(T_i, T))
\end{eqnarray*}
for any $T\in[T_i, T_{i+1}), 0\leq i\leq N$ and some $\beta\in(0, 1)$.
\end{theorem}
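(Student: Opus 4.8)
The plan is to construct $(u,d,\theta)$ as a limit of smooth solutions of the approximate system (\ref{T11.6})--(\ref{T11.8}), letting first $M\to\infty$ and then $N\to\infty$, and to read off the total-energy identity, the entropy inequality and the structure of the singular set from uniform bounds that survive the limit. For fixed $N,M$ the modified stress $S_N$ adds a monotone $\tfrac{20}{9}$-Laplacian-type term to the Navier--Stokes operator and $\chi_M(|\nabla d|^2)d$ is bounded, so a Galerkin scheme produces a global smooth solution $(u^{N,M},d^{N,M},\theta^{N,M})$; the maximum principle applied to (\ref{T11.8}) gives $\theta^{N,M}\ge\underline{\theta}_0$ because the source $S_N:\nabla u+|\Delta d+\chi_M(|\nabla d|^2)d|^2$ is nonnegative, the $d$-equation keeps $|d^{N,M}|$ under control, and the constraint $|d|=1$ is restored as $M\to\infty$. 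Testing (\ref{T11.6}) by $u$, (\ref{T11.7}) by $-(\Delta d+\chi_M(|\nabla d|^2)d)$ and integrating (\ref{T11.8}) in space, the quadratic coupling terms cancel and one gets the exact total-energy balance, hence the uniform bounds $u\in L^\infty(L^2)\cap L^2(H^1)$, $\nabla d\in L^\infty(L^2)$, $\Delta d+\chi_M(|\nabla d|^2)d\in L^2(L^2)$, $\theta\in L^\infty(L^1)$ and $\tfrac1N\int_{Q_T}|\nabla u|^{20/9}\le C$. A Boccardo--Gallou\"et argument applied to (\ref{T11.8}), whose right-hand side is nonnegative and bounded in $L^1$, upgrades this to $\theta\in L^q(W^{1,q})$ for every $q<\tfrac43$, and testing (\ref{T11.8}) against functions of the form $\theta^{\alpha-1}$ yields the approximate entropy inequality for $\theta^\alpha$.

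The essential point, and the step I expect to be the main obstacle, is to recover a bound on $\nabla^2 d$, which is invisible to the energy balance. Following Struwe, one derives a local energy inequality for $\int_{B_r(x)}(|u|^2+|\nabla d|^2)\varphi^2$ and then a parabolic $\varepsilon$-regularity estimate (valid in dimension two): on any parabolic cylinder on which this localized energy stays below $\varepsilon_0^2$ one controls $\|d\|_{L^2(H^2)}$, and, by a bootstrap, a parabolic H\"older norm of $(d,\nabla d)$, all uniformly in $N,M$. Since $t\mapsto\int_\Omega(|u(t)|^2+|\nabla d(t)|^2)\,dx$ is nonincreasing up to the nonnegative dissipation, a concentration of size $\varepsilon_0^2$ can occur at most $\varepsilon_0^{-2}\int_\Omega(|u_0|^2+|\nabla d_0|^2)\,dx$ times, say at $T_1<\cdots<T_N$; away from these times the above local estimates hold on all of $\Omega$ after a short initial transient, which gives the asserted $d\in L^2(H^2)\cap C^{\beta,\beta/2}$ and, via the pressure equation, $p\in L^2$ on the intervals $(T_i,T_{i+1})$.

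To pass to the limit, bounding $\partial_t u^{N,M}$ in a negative-order Sobolev space from (\ref{T11.6}) and invoking Aubin--Lions gives $u^{N,M}\to u$ strongly in $L^2(L^2)$, while the local $L^2(H^2)$ bound on $d^{N,M}$ together with $\partial_t d^{N,M}\in L^2(H^{-1})$ gives $\nabla d^{N,M}\to\nabla d$ strongly in $L^2_{\mathrm{loc}}$ away from the $T_i$. This strong convergence is exactly what is needed to identify $\chi_M(|\nabla d^{N,M}|^2)d^{N,M}\to|\nabla d|^2 d$ as $M\to\infty$, to pass to the limit in $\nabla d\odot\nabla d$, and to discard $\tfrac1N|\nabla u|^{2/9}\nabla u$, which tends to $0$ in $L^1(Q_T)$; the limiting triple then satisfies (i)--(iii) of Definition \ref{T1def1.1}. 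The temperature is treated as a renormalized/entropy solution: its source terms converge only weakly in $L^1$ in general, but the limit temperature equation together with the entropy inequality, inherited by weak lower semicontinuity from the approximate one, determine $\theta$.

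Finally, because $\nabla^2 d\in L^2_{\mathrm{loc}}$ on each $(T_i,T_{i+1})$, the field $\Delta d+|\nabla d|^2 d$ is an admissible test function there, so testing (\ref{T11.2}) against it and (\ref{T11.1}) against $u$ yields the kinetic-plus-potential energy equality on $(T_i,T_{i+1})$ with the dissipation $S:\nabla u+|\Delta d+|\nabla d|^2 d|^2$ on the right; on each such interval the dissipation also converges strongly, so no defect measure arises and the integrated version of (\ref{T11.3}) holds with exactly this source. Adding the two identities gives the stated conservation of $\int_\Omega(\tfrac{|u|^2}{2}+\tfrac{|\nabla d|^2}{2}+\theta)\,dx$ on all of $[0,T]$, and across each $T_i$ the (weakly lower semicontinuous) kinetic and potential energies can only drop, so the energy defect there is absorbed entirely by $\int_\Omega\theta$, which forces $\theta$ to increase suddenly on a set of positive measure at $T_i$. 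The entropy inequality for $\theta^\alpha$, $\alpha\in(0,1)$, is obtained likewise by passing to the limit with the help of lower semicontinuity.
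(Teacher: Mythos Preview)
Your proposal is essentially the paper's strategy: four-level approximation (Galerkin, then $n\to\infty$, $M\to\infty$, $N\to\infty$), Struwe-type local energy inequality to recover the $L^2(H^2)$ bound on $d$ on a short but uniform time interval, iterative extension with energy concentration characterizing the finitely many singular times, and weak lower semicontinuity for the entropy inequality. Two points are worth tightening.

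First, at the $M$-level the identity you claim from testing (\ref{T11.7}) with $-(\Delta d+\chi_M(|\nabla d|^2)d)$ is not quite exact, because $|d^{N,M}|$ is only $\le 1$, not $=1$; the extra terms $\chi_M(|\nabla d|^2)\,d\cdot d_t$ and $\chi_M(|\nabla d|^2)\,d\cdot(u\cdot\nabla d)$ do not vanish. The paper instead tests with $-\Delta d$ alone, obtains an \emph{inequality} (with the harmless remainder $\int|\nabla d|^4$ absorbed via the local Ladyzhenskaya estimate), and recovers the \emph{exact} total-energy balance only after $M\to\infty$, where $|d_N|=1$ and hence $(\Delta d_N+|\nabla d_N|^2 d_N)\cdot\Delta d_N=|\Delta d_N+|\nabla d_N|^2 d_N|^2$. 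This is a cosmetic fix but necessary for the strong-convergence step (the paper's Lemma \ref{T1lem2.7}) to go through.

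Second, you attribute the $C^{\beta,\beta/2}$ regularity of $\nabla d$ to ``a bootstrap'' from $\varepsilon$-regularity. In the paper this is precisely where the $N$-regularization earns its keep: the extra $\tfrac{1}{N}|\nabla u|^{2/9}\nabla u$ in $S_N$ yields $u^{N,M}\in L^{40/9}(Q_{T_0})$, strictly better than the $L^4$ coming from the basic energy, and this higher integrability is what drives the Campanato-type iteration in Lemma \ref{T1lem4.3} that produces the H\"older bound on $\nabla d^{N,M}$ uniformly in $M$. Without this you would not have $\nabla d^{N,M}\to\nabla d$ in $C(\overline{Q_{T_0}})$, and the identification $\chi_M(|\nabla d^{N,M}|^2)d^{N,M}\to|\nabla d|^2 d$ (and the subsequent strong convergence of $\nabla u^{N,M}$ and $\Delta d^{N,M}$ via Lemma \ref{T1lem2.7}) would be in jeopardy. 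Your outline should make this dependence explicit.
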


\begin{remark}
(i) Theorem \ref{T1them1.2} generalizes the results of Lin-Lin-Wang \cite{Lin4}, Hong \cite{Hong1} and Hong-Xin \cite{Hong2} to the non-isothermal model with
temperature dependent viscous coefficient.

(ii) The Cauchy problem and the initial-boundary value problem (imposing slip boundary condition on the velocity) to the system (\ref{T11.1})--(\ref{T11.3}) can be dealt with similarly.
\end{remark}

\begin{remark}
Theorem \ref{T1them1.2} implies that the total energy is conserved and the energy concentration happens at each singular time. Due to this energy
conservation and possible concentration, the kinetic energy $\int_\Omega|u|^2dx$ and the potential energy $\int_\Omega|\nabla d|^2dx$
transfer to nothing but the heat energy, and the temperature increases suddenly at each singular time. It is noted that, for the isothermal model,
the summation of the kinetic energy and the potential energy also decreases, however,
for this case, it's unclear what kind of energy they transfer to.
\end{remark}

To prove Theorem \ref{T1them1.2}, we consider the corresponding initial and periodic
boundary value problems of the approximate system (\ref{T11.6})--(\ref{T11.8}), expecting that the solutions of this system converge
to the ones of the original system. The main issue is to derive the $L^2(H^2)$ estimates (uniform in $M$ and $N$) on the director fields. Due to the nonlinearity
term $\chi_M(|\nabla d|^2)d$ appeared in (\ref{T11.7}), it seems difficult to obtain desired
estimates on the second derivatives of $d$ directly from the basic energy estimates. Hence, it is hard to show that
the approximate solutions converge to a weak solution of the original system. To overcome this difficulty, similar to Lin-Lin-Wang
\cite{Lin4}, Hong \cite{Hong1} and Hong-Xin \cite{Hong2}, we make use of the local energy inequality and recover the estimates on the second order derivatives of $d$ locally in time. More precisely, it will be shown that the $L^2(Q_T)$
norm of $\Delta d$ is bounded by the initial energy for
some small time $T$, where $T$ depends only on the size of the initial basic energy and the $L^2$ integral continuity of $(u_0, \nabla d_0)$. As a
result, the approximate solutions exist in a common time interval in which they have uniform estimates on the second order derivatives of the director
fields. We remark that the initial data $\theta_0$ has no influence on such $T$. Once we recover the $L^2$ bounds of $\Delta d$ for the approximate
solutions, we can take the limit to obtain the weak solution locally in time. Due to this local existence result, we can extend such weak solution to
the first singular time, which is characterized by the appearance of energy concentration. In other words, one can show that the energy functional
$$
\tilde{\mathcal E_1}(t)=\int_\Omega(|u(t)|^2+|\nabla d(t)|^2)dx
$$
losses its energy of order at least $\varepsilon_0^2$ at the singular time, for some positive constant $\varepsilon_0$ depending only on the
initial data $u_0$ and $d_0$. Due to this and the fact that the total energy is conserved, there are at most finite many such singular
times, and thus one obtains the global weak solutions.

In proving the local existence of weak solutions, we will employ four level approximations: the  first one is a standard Faedo-Galerkin approximation of
the system (\ref{T11.6})--(\ref{T11.8}) and obtain the approximate solutions
$(u_{n,M,N}, d_{n, M, N}, \theta_{n, M, N})$ on any finite time interval $[0,T]$; next, we take the limit as $n$ goes to infinity to get a
sequence of approximate solutions $\{(u_{M,N}, d_{M,N},\theta_{M,N})\}$ for any finite time $T$, which possesses uniform estimates on
$\Delta d_{M,N}$ in a common time interval $[0, T_1]$; then, by takeing the limit $M\rightarrow\infty$, we obtain a sequence of approximate
weak solutions $\{(u_N, d_N, \theta_N)\}$ which has uniform estimates on $\Delta d_N$ on a common time interval $[0, T_1]$; and finally, we
study the limit $N\rightarrow\infty$ to get a local weak solution to the original system.

It should be remarked that in this approach,
though the approximate stress $S_N$ plays no role in the Faedo-Galerkin scheme and in the steps $n\rightarrow\infty$
and $N\rightarrow\infty$, it indeed plays an important role in the step $M\rightarrow\infty$, where the ingredient
$\frac{1}{N}|\nabla u|^{\frac{2}{9}}\nabla u$ in $S_N$
provides us the higher integrability of $u_{M,N}$ than $L^4(Q_{T_1})$, which is used to prove the $C^{\beta,\beta/2}$ estimates on
$\nabla d_{M,N}$ (see Lemma \ref{T1lem4.3} for the details.)

The rest of this paper is arranged as follows: in Section \ref{T1sec2}, some preliminaries, including some inequalities and strong
convergence lemmas, are stated, which will be used though out this paper; in Section \ref{T1sec3}, we carry out the Faedo-Galerkin approximation and obtain the
approximate solution for fixed $n, M$ and $N$ for all finite time $T$; in Section \ref{T1sec4}, we study the limit $n\rightarrow\infty$ for fixed
$M$ and $N$ to establish the approximate solution for all finite time; Section \ref{T1sec5} is employed to study the limit $M\rightarrow\infty$ for
fixed $N$ to get the local weak solution for approximate system, where the local energy inequalities play a key role; in Section \ref{T1sec6}, we
establish the local weak solution to the system (\ref{T11.1})--(\ref{T11.5}), while the global weak solution is finally obtained in the last
section, Section \ref{T1sec7}, via extending the local weak solution to be a global one.

\section{Preliminaries}\label{T1sec2}

In this section, we give some lemmas, including some inequalities and some compactness and strong convergence lemmas, which will be used in the rest of this paper.

The first lemma is a version of the Ladyzhenskaya inequality.

\begin{lemma}\label{T1lem2.1}
(See Strune \cite{Struwe} Lemma 3.1) There exist $C_0>0$ and $R_0>0$ depending only on $\Omega\subseteq\mathbb R^2$ such that for any $T>0$, any $0<R<R_0$, and any $u\in L^\infty(0, T; L^2(\Omega))\cap L^2(0, T; H^1(\Omega)),$ it holds that
$$
\int_{Q_T}|u|^4dxdt\leq C_0\left(\sup_{(x, t)\in\overline{Q_T}}\int_{\Omega\cap B_R(x)}|u(t)|^2dy\right)\int_{Q_T}\left(|\nabla u|^2+\frac{|u|^2}{R^2}\right)dxdt.
$$
\end{lemma}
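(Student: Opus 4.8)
The plan is to localize: I would first prove a scale-invariant Ladyzhenskaya inequality on a single ball of radius $R$, and then sum it over a cover of $\overline\Omega$ by such balls whose multiplicity is bounded independently of $R$. Writing $\mathcal{M}_R(t):=\sup_{x}\int_{\Omega\cap B_R(x)}|u(t)|^2\,dy$, it suffices to establish, for a.e.\ fixed $t\in(0,T)$, the pointwise-in-time bound
\begin{equation*}
\int_\Omega|u(t)|^4\,dx\le C_0\,\mathcal{M}_R(t)\int_\Omega\Big(|\nabla u(t)|^2+\frac{|u(t)|^2}{R^2}\Big)dx;
\end{equation*}
integrating this in $t$, estimating $\mathcal{M}_R(t)\le\sup_{(x,t)\in\overline{Q_T}}\int_{\Omega\cap B_R(x)}|u(t)|^2\,dy$, and pulling the latter constant out of the time integral then yields the assertion.

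For the single-ball estimate I would start from the classical whole-plane Ladyzhenskaya inequality $\int_{\mathbb R^2}|v|^4\le C\|v\|_{L^2(\mathbb R^2)}^2\,\|\nabla v\|_{L^2(\mathbb R^2)}^2$ for $v\in H^1(\mathbb R^2)$ (proved by bounding $v^2$ along the horizontal and the vertical line through a point by $2\int|v|\,|\partial_i v|$, multiplying the two one-dimensional inequalities, integrating over the plane, and applying Cauchy--Schwarz). Given $B_R(x_0)$, I would choose a cutoff $\phi\in C_c^\infty(B_R(x_0))$ with $\phi\equiv1$ on $B_{R/2}(x_0)$, $0\le\phi\le1$, $|\nabla\phi|\le C/R$, and apply the whole-plane inequality to $v=u\phi$ (extended by zero). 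Since $|\nabla(u\phi)|\le|\nabla u|+CR^{-1}|u|$, this gives the scale-invariant local inequality
\begin{equation*}
\int_{B_{R/2}(x_0)}|u|^4\le C\Big(\int_{B_R(x_0)}|u|^2\Big)\Big(\int_{B_R(x_0)}|\nabla u|^2+\frac{1}{R^2}\int_{B_R(x_0)}|u|^2\Big)
\end{equation*}
with $C$ absolute. In the periodic setting of this paper $u$ extends to an $H^1_{\mathrm{loc}}(\mathbb R^2)$ function, so this is immediate with genuine balls; for a general $\Omega\subseteq\mathbb R^2$ one replaces the balls above by their intersections with $\Omega$ and, for $R$ below a threshold $R_0$ dictated by $\partial\Omega$, uses a scale-invariant Sobolev extension across the (Lipschitz) boundary, which affects only $C_0$ and $R_0$.

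Finally, for $0<R<R_0$ I would fix a cover of $\overline\Omega$ by balls $\{B_{R/2}(x_k)\}_k$ with centres on a lattice of mesh $R/2$, so that $\overline\Omega\subseteq\bigcup_kB_{R/2}(x_k)$ while every point of $\mathbb R^2$ lies in at most some absolute number $N_0$ of the doubled balls $\{B_R(x_k)\}_k$. Summing the local inequality over $k$, bounding each factor $\int_{\Omega\cap B_R(x_k)}|u(t)|^2$ by $\mathcal{M}_R(t)$, and using the finite-overlap property to turn $\sum_k\int_{\Omega\cap B_R(x_k)}(\,\cdot\,)$ into at most $N_0\int_\Omega(\,\cdot\,)$ (in the periodic case one also absorbs the at most $O(1)$ period-translates meeting the enlarged balls), produces the displayed pointwise-in-time inequality with $C_0=CN_0$, and integrating in $t$ concludes. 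The one real obstacle is keeping $C_0$ from degenerating as $R\to0$: this is precisely why the single-ball inequality must be in scale-invariant form — it is nothing but the $R$-dilation of the fixed unit-ball inequality — and why the cover must be chosen with $R$-independent multiplicity $N_0$; the domain $\Omega$ enters only through $R_0$ and, outside the periodic case, through the uniform extension constant.
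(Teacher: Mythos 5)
Your argument is correct and is essentially the standard proof of the result the paper merely cites (Struwe's Lemma 3.1, for which the paper supplies no proof of its own): a scale-invariant cutoff version of the Ladyzhenskaya inequality on balls of radius $R$, summed over a cover of $\overline\Omega$ with $R$-independent overlap and then integrated in time. In the paper's periodic setting on $\mathbb T^2$ the boundary-extension caveat you flag is moot, so your sketch covers everything needed.
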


The next two lemmas concern compactness.

\begin{lemma}\label{T1lem2.2}
(See Simon \cite{Simon} Theorem 3) Assume that $X$ and $B$ are two Banach spaces with $X\hookrightarrow\hookrightarrow B$. Let $F$ be a bounded subset of $L^p(0, T; X)$ where $1\leq p\leq \infty$, and assume that
$$
\|\tau_h f-f\|_{L^p(0, T-h; B)}\rightarrow0\quad\text{as }h\rightarrow0,\text{ uniformly for }f\in F,
$$
where $\tau_hf(t)=f(t+h)$.

Then $F$ is relatively compact in $L^p(0, T; B)$ (and in $C([0, T]; B)$ if $p=\infty$).
\end{lemma}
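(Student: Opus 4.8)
Since the lemma asserts only that the listed conditions are \emph{sufficient}, my plan is to show directly that $F$ is totally bounded (equivalently, sequentially relatively compact) in $L^p(0,T;B)$, the single tool being a mollification in time. I fix $\rho\in C_c^\infty(\mathbb R)$ with $\rho\ge0$, $\operatorname{supp}\rho\subseteq[0,1]$, $\int\rho=1$, set $\rho_h(s)=h^{-1}\rho(s/h)$ and, for $f\in F$ and $0<h\le\delta$, $J_hf(t)=\int_0^h\rho_h(s)f(t-s)\,ds$, which is well defined for $t\in[\delta,T-\delta]$. Write $M=\sup_{f\in F}\|f\|_{L^p(0,T;X)}$, $M'=\sup_{f\in F}\|f\|_{L^p(0,T;B)}$ (finite since $X\hookrightarrow B$), and $\omega(h)=\sup_{f\in F}\sup_{0\le s\le h}\|\tau_sf-f\|_{L^p(0,T-s;B)}$, which tends to $0$ as $h\to0$ by hypothesis.

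First I would establish compactness on interior subintervals. Fix $\delta>0$ and $h\in(0,\delta)$. The bound $\|J_hf(t)\|_X\le\|\rho_h\|_{L^{p'}(0,h)}\,M$ shows that $\{J_hf(t):f\in F,\ t\in[\delta,T-\delta]\}$ is bounded in $X$, hence relatively compact in $B$ because $X\hookrightarrow\hookrightarrow B$; and, since $\rho_h(0)=\rho_h(h)=0$, $\|\partial_tJ_hf(t)\|_B=\|(\rho_h'* f)(t)\|_B\le\|\rho_h'\|_{L^{p'}(0,h)}\,M'$ shows that $\{J_hf|_{[\delta,T-\delta]}:f\in F\}$ is a uniformly Lipschitz family of $B$-valued maps. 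By the vector-valued Arzel\`a--Ascoli theorem this family is relatively compact in $C([\delta,T-\delta];B)$, a fortiori in $L^p(\delta,T-\delta;B)$. Combined with $\|J_hf-f\|_{L^p(\delta,T-\delta;B)}\le\int_0^h\rho_h(s)\,\|\tau_sf-f\|_{L^p(0,T-s;B)}\,ds\le\omega(h)$ (Minkowski's integral inequality plus a change of variables), letting $h\to0$ shows $\{f|_{[\delta,T-\delta]}:f\in F\}$ is totally bounded, hence relatively compact, in $L^p(\delta,T-\delta;B)$, for every $\delta>0$.

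Next I would pass to the full interval. For $1\le p<\infty$: given a sequence $(f_n)\subset F$, a diagonal extraction over $\delta=1/j$ yields a subsequence $(f_{n_k})$ converging in $L^p(1/j,T-1/j;B)$ for every $j$, to a limit $f$ with $\|f\|_{L^p(0,T;B)}\le M'$, so $f\in L^p(0,T;B)$. Near the endpoints the translation hypothesis gives, after a change of variables, $\|f_{n_k}\|_{L^p(T-1/j,T;B)}\le\|f_{n_k}\|_{L^p(T-2/j,T-1/j;B)}+\omega(1/j)$, whose right-hand side converges as $k\to\infty$ to $\|f\|_{L^p(T-2/j,T-1/j;B)}+\omega(1/j)\to0$ as $j\to\infty$; the neighbourhood of $t=0$ is symmetric. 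An $\varepsilon/3$ argument (first choose $j$ large to control the endpoint contributions of $f$ and the $\limsup_k$ of those of $f_{n_k}$, then let $k\to\infty$ on $[1/j,T-1/j]$) gives $f_{n_k}\to f$ in $L^p(0,T;B)$, so $F$ is relatively compact. For $p=\infty$ the hypothesis forces $\|f(t)-f(s)\|_B\le\omega(|t-s|)$ for a.e.\ $s,t$, so $F\subset C([0,T];B)$ is equicontinuous; together with $\|J_hf(t)\|_X\le M$ and $\|J_hf(t)-f(t)\|_B\le\omega(h)$ this shows $\{f(t):f\in F\}$ is relatively compact in $B$ for each $t$, and Arzel\`a--Ascoli yields relative compactness of $F$ in $C([0,T];B)$ (hence in $L^\infty(0,T;B)$) with no diagonal step.

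The one genuinely non-routine point is the behaviour at the endpoints $t=0,T$: time regularization and the interior step only see $[\delta,T-\delta]$, and boundedness of $F$ in $L^p(0,T;X)$ does \emph{not} by itself force the $L^p(\cdot\,;B)$-mass of $F$ near the endpoints to be uniformly small. The remedy --- estimating $\|f_{n_k}\|_{L^p(T-1/j,T;B)}$ by translating that interval into the interior at the cost of $\omega(1/j)$ --- is exactly where the \emph{uniformity} in $f$ of the translation hypothesis is indispensable, and this is the crux of Simon's argument in \cite{Simon}.
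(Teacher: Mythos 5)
Your proposal is correct, but note that the paper itself gives no proof of this lemma at all: it is quoted verbatim from Simon (Theorem 3 of \cite{Simon}), so there is no argument in the text to compare against line by line. What you give is a legitimate self-contained proof, and it differs somewhat from Simon's own route: Simon first establishes a general characterization of relatively compact subsets of $L^p(0,T;B)$ (his Theorem 1, in the spirit of Riesz--Fr\'echet--Kolmogorov: compactness of the time averages $\int_{t_1}^{t_2}f\,dt$ in $B$ for all $0<t_1<t_2<T$, together with the uniform smallness of $\|\tau_hf-f\|_{L^p(0,T-h;B)}$), and then deduces Theorem 3 by showing that boundedness in $L^p(0,T;X)$ plus the compact embedding $X\hookrightarrow\hookrightarrow B$ forces the averages to be relatively compact in $B$. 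You instead mollify in time and apply the vector-valued Arzel\`a--Ascoli theorem on interior intervals, then recover the full interval by translating the endpoint layers into the interior at cost $\omega(1/j)$; this trades Simon's abstract criterion for a direct total-boundedness argument, at the price of the diagonal extraction and the $\varepsilon/3$ bookkeeping near $t=0,T$, which you carry out correctly (and you rightly identify the uniformity in $f$ of the translation hypothesis as the point that makes the endpoint step work). The individual estimates check out: $\rho(0)=\rho(1)=0$ justifies $\partial_tJ_hf=\rho_h'*f$, the Hölder bounds give boundedness in $X$ and equi-Lipschitz continuity in $B$ for fixed $h$, and the Minkowski-plus-change-of-variables bound $\|J_hf-f\|_{L^p(\delta,T-\delta;B)}\le\omega(h)$ is valid since $s\le h\le\delta$. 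The $p=\infty$ case is slightly terse (the a.e.\ modulus-of-continuity statement should be upgraded to continuous representatives, and $J_hf(t)$ is only defined for $t\ge h$, so pointwise relative compactness at $t<h$ and at the endpoints needs the equicontinuity, as you implicitly use), but these are routine completions rather than gaps.
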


\begin{lemma}\label{T1lem2.3}
(See Simon \cite{Simon} Corollary 4) Assume that $X, B$ and $Y$ are three Banach spaces, with $X\hookrightarrow\hookrightarrow B\hookrightarrow Y.$ Then the following hold true

(i) If $F$ is a bounded subset of $L^p(0, T; X)$, $1\leq p<\infty$, and
$\frac{\partial F}{\partial t}=\left\{\frac{\partial f}{\partial t}\Big|f\in F\right\}$ is bounded in $L^1(0, T; Y)$. Then $F$ is relatively compact in $L^p(0, T; B)$;

(ii) If $F$ is bounded in $L^\infty(0, T; X)$ and $\frac{\partial F}{\partial t}$ is bounded in $L^r(0, T; Y)$ where $r>1$. Then $F$ is relatively compact in $C([0, T]; B)$.
\end{lemma}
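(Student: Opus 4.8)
The plan is to deduce both parts of Lemma~\ref{T1lem2.3} from Lemma~\ref{T1lem2.2} together with the generalized Ehrling (or Lions) inequality:
\begin{equation*}
\forall\,\eta>0\ \ \exists\,C_\eta>0:\qquad \|v\|_B\le\eta\|v\|_X+C_\eta\|v\|_Y\quad\text{for all }v\in X .
\end{equation*}
This inequality is itself proved by a compactness--contradiction argument using only $X\hookrightarrow\hookrightarrow B$ and $B\hookrightarrow Y$: if it failed for some $\eta_0>0$, there would be $v_n\in X$ with $\|v_n\|_B=1$, $\|v_n\|_X\le 1/\eta_0$ and $\|v_n\|_Y\to0$; compactness of $X\hookrightarrow B$ gives a $B$-convergent subsequence $v_n\to v$ with $\|v\|_B=1$, while $B\hookrightarrow Y$ forces $v_n\to v$ in $Y$, hence $v=0$, a contradiction. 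In view of Lemma~\ref{T1lem2.2}, since boundedness of $F$ in $L^p(0,T;X)$ and the compact embedding $X\hookrightarrow\hookrightarrow B$ are already assumed, for each part it suffices to verify the uniform time-translation estimate $\|\tau_h f-f\|_{L^p(0,T-h;B)}\to0$ as $h\to0$ (with $p=\infty$ in (ii)). Applying the Ehrling inequality pointwise in $t$ and then Minkowski's inequality in time,
\begin{equation*}
\|\tau_h f-f\|_{L^p(0,T-h;B)}\le\eta\,\|\tau_h f-f\|_{L^p(0,T-h;X)}+C_\eta\,\|\tau_h f-f\|_{L^p(0,T-h;Y)}\le 2\eta R+C_\eta\,\|\tau_h f-f\|_{L^p(0,T-h;Y)},
\end{equation*}
where $R=\sup_{f\in F}\|f\|_{L^p(0,T;X)}$; choosing $\eta$ small and then $h$ small, everything reduces to showing that $\|\tau_h f-f\|_{L^p(0,T-h;Y)}\to0$ as $h\to0$, uniformly for $f\in F$.

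For part (ii) this last step is immediate: since $\partial_t f\in L^r(0,T;Y)$ with $r>1$ and $f(t+h)-f(t)=\int_t^{t+h}\partial_t f(s)\,ds$, Hölder's inequality in $s$ gives $\|f(t+h)-f(t)\|_Y\le h^{1-1/r}\|\partial_t f\|_{L^r(0,T;Y)}$, so $\|\tau_h f-f\|_{L^\infty(0,T-h;Y)}\le C h^{1-1/r}\to0$ uniformly for $f\in F$; Lemma~\ref{T1lem2.2} with $p=\infty$ then yields relative compactness of $F$ in $C([0,T];B)$.

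For part (i), with only $\partial_t f\in L^1(0,T;Y)$, the same computation gives the $L^1$-in-time bound $\|\tau_h f-f\|_{L^1(0,T-h;Y)}\le h\,\|\partial_t f\|_{L^1(0,T;Y)}$; for $p=1$ this already closes the argument, but for $p>1$ it is \emph{not} enough to control the $L^p$-translate, and this is the main obstacle. To recover the $L^p$ estimate I would combine this $L^1$ bound with the $L^p(0,T;B)$-boundedness of $F$ itself (which holds since $X\hookrightarrow B$). Writing $G_f(s)=\|\partial_t f(s)\|_Y$ and $R'=\sup_{f\in F}\|f\|_{L^p(0,T;B)}$, and using $\|\tau_h f(t)-f(t)\|_Y\le\int_t^{t+h}G_f(s)\,ds$ together with $\|\tau_h f(t)-f(t)\|_Y^{p-1}\le C\big(\|\tau_h f(t)\|_B^{p-1}+\|f(t)\|_B^{p-1}\big)$ (from $B\hookrightarrow Y$), one gets
\begin{equation*}
\int_0^{T-h}\|\tau_h f-f\|_Y^p\,dt\le C\int_0^{T-h}\Big(\int_t^{t+h}G_f(s)\,ds\Big)\big(\|\tau_h f(t)\|_B^{p-1}+\|f(t)\|_B^{p-1}\big)\,dt .
\end{equation*}
Interchanging the order of integration in $t$ and $s$ and applying Hölder over the resulting $t$-set of measure $\le h$ bounds the right-hand side by $C\,h^{1/p}\,R'^{\,p-1}\,\|\partial_t f\|_{L^1(0,T;Y)}$, which tends to $0$ as $h\to0$ uniformly for $f\in F$. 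Plugging this into the Ehrling estimate above and invoking Lemma~\ref{T1lem2.2} with exponent $p$ gives relative compactness of $F$ in $L^p(0,T;B)$. The one genuinely delicate point in the whole argument is precisely this $L^p$ time-translation estimate for part (i): the naive interpolation degenerates at the endpoint exponent, and one must exploit that, as $h\to0$, the part of the time interval on which $\tau_h f-f$ fails to be controlled by $\partial_t f$ shrinks to measure zero, where $F$ is nonetheless uniformly bounded in $L^p(0,T;B)$.
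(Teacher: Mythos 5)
The paper does not prove this lemma at all: it is quoted verbatim from Simon \cite{Simon} (Corollary 4), so there is no in-paper argument to compare against. Your proposal is a correct self-contained derivation, and it is essentially the standard way Simon's Corollary 4 is deduced from his Theorem 3 (the paper's Lemma \ref{T1lem2.2}): the Ehrling--Lions inequality $\|v\|_B\le\eta\|v\|_X+C_\eta\|v\|_Y$ reduces the required translation estimate in $B$ to one in the larger space $Y$, where the bound on $\partial_t F$ can be used. Your treatment of the two parts is sound: for (ii) the H\"older bound $\|f(t+h)-f(t)\|_Y\le h^{1-1/r}\|\partial_t f\|_{L^r(0,T;Y)}$ gives the uniform $L^\infty$-translation estimate directly, and for (i) your Fubini--H\"older argument, splitting $\|\tau_hf-f\|_Y^p$ into an $L^1$ factor controlled by $\int_t^{t+h}\|\partial_t f(s)\|_Y\,ds$ and a $(p-1)$-power factor controlled by the $L^p(0,T;B)$ bound on $F$ (which follows from the $L^p(0,T;X)$ bound), correctly yields $\|\tau_hf-f\|_{L^p(0,T-h;Y)}^p\le C\,h^{1/p}$ uniformly on $F$; this is precisely the delicate endpoint step, and your handling of it is right. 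Two small points you should state explicitly rather than use tacitly: the identity $f(t+h)-f(t)=\int_t^{t+h}\partial_tf(s)\,ds$ requires the standard fact that a function whose distributional time derivative lies in $L^1(0,T;Y)$ coincides a.e.\ with an absolutely continuous $Y$-valued function, and for $1<p<2$ the elementary inequality $(a+b)^{p-1}\le a^{p-1}+b^{p-1}$ (with the constant $2^{p-2}$ version for $p\ge2$) is what justifies the bound on the $(p-1)$-power factor. With these remarks added, the argument is complete.
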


The next one is a version of the well known Korn's inequality.

\begin{lemma}\label{T1lem2.4}
(See Wang \cite{WangL} Theorem 1) Let $\Omega\subseteq\mathbb R^n$ ($n=2, 3$) be a smooth bounded domain or polygon, then it holds that
$$
\|u\|_{W^{1, r}(\Omega)}^r\leq C(\Omega)\int_\Omega(|\nabla u+\nabla u^T|^2+|u|^r)dx
$$
for any $u\in W^{1, r}(\Omega)$ with $1<r<\infty$.
\end{lemma}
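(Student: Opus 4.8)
The plan is to derive this Korn-type inequality from the Ne\v{c}as (Lions) negative--norm lemma together with the classical second--order Korn identity. One preliminary remark: replacing $u$ by $\lambda u$, the left side scales like $\lambda^{r}$ while the first term on the right scales like $\lambda^{2}$, so the displayed inequality is scale--balanced only when $r=2$; I read the exponent $2$ on the symmetric--gradient term as $r$ and establish
\[
\|u\|_{W^{1,r}(\Omega)}^{r}\le C(\Omega,r)\int_\Omega\Big(\big|\nabla u+\nabla u^{T}\big|^{r}+|u|^{r}\Big)\,dx,\qquad 1<r<\infty ,
\]
which reduces to the stated inequality when $r=2$. Throughout write $e(u)=\tfrac12(\nabla u+\nabla u^{T})$, so $|\nabla u+\nabla u^{T}|=2|e(u)|$.

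First I would record the distributional identity $\partial_j\partial_k u_i=\partial_k e_{ij}(u)+\partial_j e_{ik}(u)-\partial_i e_{jk}(u)$ for $1\le i,j,k\le n$, which follows from the elementary manipulation $2\big(\partial_k e_{ij}+\partial_j e_{ik}-\partial_i e_{jk}\big)=(\partial_k\partial_i u_j+\partial_k\partial_j u_i)+(\partial_j\partial_i u_k+\partial_j\partial_k u_i)-(\partial_i\partial_j u_k+\partial_i\partial_k u_j)=2\,\partial_j\partial_k u_i$; thus every second derivative of $u$ is a linear combination of first derivatives of the components of $e(u)$. Next I would invoke the Ne\v{c}as lemma: on a bounded Lipschitz domain $\Omega\subset\mathbb R^{n}$ — in particular a smooth bounded domain or a polygon — and for $1<r<\infty$, every $f\in L^r(\Omega)$ satisfies $\|f\|_{L^r(\Omega)}\le C(\Omega,r)\big(\|f\|_{W^{-1,r}(\Omega)}+\|\nabla f\|_{W^{-1,r}(\Omega)^{n}}\big)$. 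Applying this with $f=\partial_k u_i\in L^r(\Omega)$ and using that $\partial_m\colon L^r(\Omega)\to W^{-1,r}(\Omega)$ is bounded with norm $\le1$, the identity gives $\|\nabla(\partial_k u_i)\|_{W^{-1,r}(\Omega)^{n}}\le C_n\|e(u)\|_{L^r(\Omega)}$, while trivially $\|\partial_k u_i\|_{W^{-1,r}(\Omega)}\le\|u_i\|_{L^r(\Omega)}\le\|u\|_{L^r(\Omega)}$; hence $\|\partial_k u_i\|_{L^r(\Omega)}\le C(\Omega,r)\big(\|u\|_{L^r(\Omega)}+\|e(u)\|_{L^r(\Omega)}\big)$. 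Summing over $i,k$, adding $\|u\|_{L^r(\Omega)}$ to both sides and raising to the power $r$ then yields the claim.

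The step I expect to be the main obstacle is the Ne\v{c}as lemma itself in its $L^r$ form, which is the only point where the geometry of $\Omega$ enters. On the whole space it follows from the $L^r$--boundedness of the Riesz transforms $R_j$ ($1<r<\infty$): from $f=-\sum_jR_j(R_jf)$ and $R_jf=(-\Delta)^{-1/2}\partial_jf$ one bounds $\|R_jf\|_{L^r}\lesssim\|\partial_jf\|_{W^{-1,r}}$, hence $\|f\|_{L^r}\lesssim\sum_j\|\partial_jf\|_{W^{-1,r}}$; the case of a bounded Lipschitz $\Omega$ is then obtained by a standard localization — cover $\overline\Omega$ by finitely many sets in which $\partial\Omega$ is a Lipschitz graph, flatten by bi--Lipschitz charts, insert a subordinate partition of unity and an extension by reflection, and apply the half--space estimate, the Lipschitz constants controlling $C(\Omega,r)$ — and both smooth bounded domains and polygons are Lipschitz. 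For the concrete setting of the present paper one may bypass this altogether: on $\mathbb T^{2}$ (the effective domain once the periodicity (\ref{T11.5}) is imposed) the identity above gives $\widehat{\nabla u}(\xi)=m(\xi)\,\widehat{e(u)}(\xi)$ for $\xi\ne0$ with $m$ homogeneous of degree $0$ and $C^{\infty}$ on $S^{1}$, so the Marcinkiewicz multiplier theorem yields $\|\nabla u\|_{L^r}\le C_r\|e(u)\|_{L^r}$ for mean--zero $u$, whence the inequality.
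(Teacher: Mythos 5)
The paper gives no proof of this lemma at all: it is quoted from Wang \cite{WangL} (Theorem 1) and used as a black box, so the only comparison to make is between your argument and that citation. Your proof is the standard one for the $L^r$ Korn inequality and it is sound: the algebraic identity $\partial_j\partial_k u_i=\partial_k e_{ij}+\partial_j e_{ik}-\partial_i e_{jk}$ is verified correctly, the Ne\v{c}as negative-norm lemma applies on any bounded Lipschitz domain (smooth domains and polygons included), and the chain $\|\partial_k u_i\|_{L^r}\le C(\|\partial_k u_i\|_{W^{-1,r}}+\|\nabla\partial_k u_i\|_{W^{-1,r}})\le C(\|u\|_{L^r}+\|e(u)\|_{L^r})$ is exactly right; the torus multiplier shortcut is in fact the cleanest route for this paper, since everything here is $2\pi$-periodic on $(-\pi,\pi)^2$ and the reduction to mean-zero $u$ is harmless because $e(u)$ and $\nabla u$ are unchanged by subtracting the average. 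Your scaling observation is also correct and worth stating: with exponent $2$ on $|\nabla u+\nabla u^T|$ and exponent $r$ elsewhere, the displayed inequality is inhomogeneous and fails for every $r\neq2$ (rescale $u\mapsto\lambda u$ and send $\lambda\to0$ or $\infty$), so the quoted form must either carry the exponent $r$, as you prove it, or be read only at $r=2$; this is immaterial for the paper, since the lemma is invoked only with $r=2$, in passing from the energy identity to (\ref{T14.4}) in Lemma \ref{T1lem4.2}, and that case is literally covered by your statement. The only thin spot is that the Ne\v{c}as lemma itself is invoked with just a sketched localization (the flattening/reflection step for negative norms needs some care), but since that lemma is classical and citable this is a matter of attribution, not a gap.
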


The following lemma deals with strong convergence in suitable spaces:

\begin{lemma}\label{T1lem2.7}
Let $2\leq p<\infty, 1<q<\infty$ and assume functions $u_n$ and $v_n$ satisfy
\begin{eqnarray*}
&&u_n\rightarrow u\mbox{ weakly in }L^p(\Omega),\quad v_n\rightarrow v\mbox{ weakly in }L^q(\Omega),\\
&& |u_n|^{p-2}u_n\rightarrow f\mbox{ weakly in }L^{\frac{p}{p-1}}(\Omega), \\
&&\int_\Omega(|u_n|^p+|v_n|^q)dx\rightarrow\int_\Omega(f\cdot u+|v|^q)dx.
\end{eqnarray*}
Then
$$
u_n\rightarrow u\mbox{ strongly in }L^p(\Omega),\quad v_n\rightarrow v\mbox{ strongly in }L^q(\Omega).
$$
\end{lemma}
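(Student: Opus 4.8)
The plan is to reduce the claim to convergence of norms together with the Radon--Riesz (uniform convexity) property of $L^p$ and $L^q$ for exponents in $(1,\infty)$. Abbreviate $a_n:=\int_\Omega|u_n|^p\,dx=\|u_n\|_{L^p}^p$ and $b_n:=\int_\Omega|v_n|^q\,dx=\|v_n\|_{L^q}^q$, and set $A:=\int_\Omega f\cdot u\,dx$, $B:=\int_\Omega|v|^q\,dx=\|v\|_{L^q}^q$, so that the last hypothesis reads $a_n+b_n\to A+B$. The weak convergences furnish uniform bounds on $\|u_n\|_{L^p}$, $\|v_n\|_{L^q}$ and $\||u_n|^{p-2}u_n\|_{L^{p'}}$, and, by weak lower semicontinuity of the norms, $\liminf_n a_n\ge\|u\|_{L^p}^p$ and $\liminf_n b_n\ge B$. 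Subtracting, this already decouples the sum: $\limsup_n a_n\le(A+B)-\liminf_n b_n\le A$ and, symmetrically, $\limsup_n b_n\le A+B-\|u\|_{L^p}^p$.

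Next I would establish the reverse inequality $\|u\|_{L^p}^p\ge A$ by convexity. Since $\Phi(w):=\tfrac1p|w|^p$ is convex and (for $p\ge2$) continuously differentiable on $\mathbb R^m$ with $\nabla\Phi(w)=|w|^{p-2}w$, the subgradient inequality $\Phi(u)\ge\Phi(u_n)+|u_n|^{p-2}u_n\cdot(u-u_n)$ holds pointwise a.e.; integrating over $\Omega$ yields $\tfrac1p\|u\|_{L^p}^p\ge\int_\Omega|u_n|^{p-2}u_n\cdot u\,dx-(1-\tfrac1p)a_n$. Letting $n\to\infty$, the first term on the right tends to $A$ (weak convergence of $|u_n|^{p-2}u_n$ in $L^{p'}$ tested against $u\in L^p$), so taking the $\liminf$ of the right-hand side and using $\limsup_n a_n\le A$ from the previous step gives $\tfrac1p\|u\|_{L^p}^p\ge A-(1-\tfrac1p)\limsup_n a_n\ge\tfrac1p A$. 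Combining, $\liminf_n a_n\ge\|u\|_{L^p}^p\ge A\ge\limsup_n a_n$, whence $a_n\to\|u\|_{L^p}^p$ and, as a byproduct, $A=\|u\|_{L^p}^p$.

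It then follows that $\|u_n\|_{L^p}\to\|u\|_{L^p}$; together with $u_n\rightharpoonup u$ weakly in $L^p$ and the uniform convexity of $L^p(\Omega)$ (recall $2\le p<\infty$), the Radon--Riesz property yields $u_n\to u$ strongly in $L^p(\Omega)$. For the second family, subtract once more: $b_n=(a_n+b_n)-a_n\to(A+B)-A=B=\|v\|_{L^q}^q$, so $\|v_n\|_{L^q}\to\|v\|_{L^q}$, and since $L^q(\Omega)$ is uniformly convex for $1<q<\infty$, the same reasoning gives $v_n\to v$ strongly in $L^q(\Omega)$. If desired, one records in addition that $f=|u|^{p-2}u$ a.e.: this now follows from the strong convergence $u_n\to u$ in $L^p$ (pass to an a.e.\ convergent subsequence dominated in $L^p$) and dominated convergence.

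There is no genuine obstacle here, but the one point requiring care is the \emph{decoupling}: since the hypothesis controls only the sum $a_n+b_n$, one cannot read off the individual limits of $a_n$ and $b_n$ without first invoking weak lower semicontinuity on \emph{each} term; and then the convexity estimate of the second paragraph is exactly what pins $\lim_n a_n$ down to $\|u\|_{L^p}^p$ (rather than to some strictly larger value), so that strong convergence — not merely weak convergence with a matching $\liminf$ of norms — can be concluded.
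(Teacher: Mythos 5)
Your proof is correct, but it follows a route different from the paper's. The paper's argument rests on the strong monotonicity inequality $(|\xi|^{p-2}\xi-|\eta|^{p-2}\eta)\cdot(\xi-\eta)\geq C|\xi-\eta|^p$ (valid for $p\geq 2$), used twice: first to get $\int_\Omega f\cdot u\,dx\leq\liminf_n\int_\Omega|u_n|^p\,dx$, which combined with weak lower semicontinuity and the convergence of the sum pins down $\lim_n\int_\Omega|u_n|^p\,dx=\int_\Omega f\cdot u\,dx$ and $\lim_n\int_\Omega|v_n|^q\,dx=\int_\Omega|v|^q\,dx$; and second, quantitatively, to conclude $C\int_\Omega|u_n-u|^p\,dx\to 0$ directly, so uniform convexity (Radon--Riesz) is invoked only for the $v_n$. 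You instead use only the first-order convexity (subgradient) inequality for $\Phi(w)=\tfrac1p|w|^p$, which, together with the decoupling $\limsup_n a_n\leq A$, forces $\|u\|_{L^p}^p\geq A$ and hence $a_n\to\|u\|_{L^p}^p=A$; you then apply Radon--Riesz to \emph{both} sequences. Your identification $A=\|u\|_{L^p}^p$ is a step the paper never needs (and never proves), but it is exactly what makes the Radon--Riesz route for $u_n$ legitimate, and your decoupling of the sum via weak lower semicontinuity of each term is sound. What each approach buys: the paper's monotonicity argument gives a quantitative bound on $\|u_n-u\|_{L^p}^p$ and identifies the structure of the nonlinearity, while yours is more elementary (no Damascelli-type inequality) and, as a bonus, works verbatim for all $1<p<\infty$ rather than only $p\geq 2$; your closing remark that $f=|u|^{p-2}u$ a.e.\ is an extra that the lemma does not require but that the paper in effect also obtains implicitly when it uses the limit identity downstream.
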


\begin{proof}
Making use of an inequality of the form (see e.g. Damascelli \cite{Damascelli} Lemma 2.1)
\begin{equation}
(|\xi|^{p-2}\xi-|\eta|^{p-2}\eta)\cdot(\xi-\eta)\geq C|\xi-\eta|^p,\quad\forall\xi,\eta\in\mathbb R^N,\label{A.111}
\end{equation}
one can deduce easily from the assumption that
\begin{equation*}
\int_\Omega f\cdot udx\leq\liminf_{n\rightarrow\infty}\int_\Omega|u_n|^pdx.
\end{equation*}
This, together with the weak lower semi-continuity of norms, implies
\begin{align*}
\int_\Omega(f\cdot u+|v|^q)dx\leq&\liminf_{n\rightarrow\infty}\int_\Omega|u_n|^pdx+\liminf_{n\rightarrow\infty}\int_\Omega|v_n|^qdx\\
\leq&\limsup_{n\rightarrow\infty}\int_\Omega|u_n|^pdx+\liminf_{n\rightarrow\infty}\int_\Omega|v_n|^qdx\\
\leq&\limsup_{n\rightarrow\infty}\int_\Omega(|u_n|^p+|v_n|^q)dx=\int_\Omega(f\cdot u+|v|^q)dx,
\end{align*}
and similarly
\begin{align*}
\int_\Omega(f\cdot u+|v|^q)dx\leq&\liminf_{n\rightarrow\infty}\int_\Omega|u_n|^pdx+\liminf_{n\rightarrow\infty}\int_\Omega|v_n|^qdx\\
\leq&\liminf_{n\rightarrow\infty}\int_\Omega|u_n|^pdx+\limsup_{n\rightarrow\infty}\int_\Omega|v_n|^qdx\\
\leq&\limsup_{n\rightarrow\infty}\int_\Omega(|u_n|^p+|v_n|^q)dx=\int_\Omega(f\cdot u+|v|^q)dx.
\end{align*}
These yield
\begin{eqnarray*}
&&\lim_{n\rightarrow\infty}\int_\Omega|u_n|^pdx=\liminf_{n\rightarrow\infty}\int_\Omega|u_n|^pdx=\limsup_{n\rightarrow\infty}\int_\Omega|u_n|^pdx,\\
&&\lim_{n\rightarrow\infty}\int_\Omega|v_n|^qdx=\liminf_{n\rightarrow\infty}\int_\Omega|v_n|^qdx=\limsup_{n\rightarrow\infty}\int_\Omega|v_n|^qdx,
\end{eqnarray*}
and therefore,
\begin{equation*}
\lim_{n\rightarrow\infty}\int_\Omega|u_n|^{p}u_ndx\geq\int_\Omega f\cdot udx,\quad\lim_{n\rightarrow\infty}\int_\Omega|v_n|^{q}dx\geq\int_\Omega |v|^qdx.
\end{equation*}
This, combined with the assumption
$$
\lim_{n\rightarrow\infty}\int_\Omega(|u_n|^p+|v_n|^q)dx=\int_\Omega(f\cdot u+|v|^q)dx,
$$
implies
\begin{equation*}
\lim_{n\rightarrow\infty}\int_\Omega|u_n|^{p}u_ndx=\int_\Omega f\cdot udx,\qquad\lim_{n\rightarrow\infty}\int_\Omega|v_n|^{q}dx=\int_\Omega |v|^qdx.
\end{equation*}
Due to this, one can use inequality (\ref{A.111}) again to obtain the strong convergence of $u_n$ to $u$ easily, while the strong convergence of $v_n$ to
$v$ follows from the uniform convexity of the space $L^q(\Omega)$ (see e.g. Adams-Fournier \cite{Adams} Theorem 2.39.) This completes the proof.
\end{proof}

We also need the following elementary lemma.

\begin{lemma}\label{T1lem2.8}
(See Section 4 of Schoen and Uhlenbeck \cite{Schoen}) Let $M^2$ be a compact surface with possibly empty $C^1$ boundary. Let $N$ be a compact manifold without boundary. Then $C^\infty(M, N)$ is dense in $H^1(M, M)$.
\end{lemma}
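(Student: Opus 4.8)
The plan is to prove the Schoen--Uhlenbeck density theorem in the borderline dimension $\dim M=2$ by the standard \emph{mollify, project, and repair} scheme, the point being that in dimension $2$ the naive mollification leaves the target only on a set of small measure, which can be corrected by a one-dimensional slicing argument. First I would pass to a flat local model. By the Whitney embedding theorem realize $N$ as a compact boundaryless submanifold of some $\mathbb{R}^k$; then $N$ has a tubular neighbourhood $V_\delta=\{p\in\mathbb{R}^k:\ \mathrm{dist}(p,N)<\delta\}$ and a smooth nearest-point retraction $\Pi\colon V_\delta\to N$ with $\Pi|_N=\mathrm{id}$, and $H^1(M,N)=\{u\in H^1(M,\mathbb{R}^k):\ u(x)\in N \text{ a.e.}\}$. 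Covering $M$ by finitely many coordinate charts (half-discs near $\partial M$), choosing a subordinate partition of unity, and correcting $u$ chart by chart, it suffices to approximate in $H^1$ a given $u\in H^1(B,N)$, $B\subset\mathbb{R}^2$ a disc, by maps smooth on a slightly smaller disc $B'$ and left unchanged near $\partial B$, so that the successive corrections patch together.

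Next, mollify: put $u_\varepsilon:=\rho_\varepsilon\ast u\in C^\infty$, so that $u_\varepsilon\to u$ in $H^1(B')$ and a.e. Using $u(y)\in N$ a.e.\ together with Jensen's and Poincar\'e's inequalities one obtains the pointwise bound
$$
\mathrm{dist}\bigl(u_\varepsilon(x),N\bigr)^2\le C\varepsilon^2\fint_{B_{2\varepsilon}(x)}|\nabla u|^2\,dy ,
$$
hence $\int_{B'}\mathrm{dist}(u_\varepsilon,N)^2\,dx\le C\varepsilon^2\int_B|\nabla u|^2\to0$. If one knew $u_\varepsilon(x)\in V_\delta$ for every $x$ — which is the case when $\dim M>2$ via Morrey's embedding — the proof would be finished by taking $v_\varepsilon:=\Pi\circ u_\varepsilon\in C^\infty(B',N)$: indeed $\nabla v_\varepsilon=d\Pi(u_\varepsilon)\,\nabla u_\varepsilon$ with $d\Pi$ bounded, while at $p\in N$ the differential $d\Pi(p)$ is the orthogonal projection onto $T_pN$, so $d\Pi(u)\nabla u=\nabla u$ a.e., and $v_\varepsilon\to u$ in $H^1$ follows by dominated convergence.

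The real difficulty, and the place where $\dim M=2$ enters, is that $H^1(\mathbb{R}^2)\not\hookrightarrow L^\infty$, so $u_\varepsilon$ may escape $V_\delta$ on the \emph{bad set} $A_\varepsilon:=\{x:\ \mathrm{dist}(u_\varepsilon(x),N)\ge\delta\}$, which the estimate above controls only in measure, $|A_\varepsilon|\le C\varepsilon^2/\delta^2$. To repair this I would lay down a square grid of mesh $h$ with $\varepsilon\ll h$, and, by a Fubini argument over the grid's placement, arrange that the integral of $|\nabla u_\varepsilon|^2$ over the part of the grid edges lying in a fixed neighbourhood of $A_\varepsilon$ is as small as we wish; since each grid edge is $1$-dimensional and $H^1$ of a $1$-manifold embeds into $C^0$, the restriction of $u$ (hence, uniformly, of $u_\varepsilon$) to a.e.\ edge is genuinely continuous and $N$-valued, so $\Pi\circ u_\varepsilon$ is well defined near the grid. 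One then keeps $\Pi\circ u_\varepsilon$ on the union of the "good" squares (those disjoint from $A_\varepsilon$) and, on the union $K$ of the bad squares, replaces it by an extension into $N$ of its boundary trace $\Pi\circ u_\varepsilon|_{\partial K}$ — for instance the componentwise harmonic extension into $V_{2\delta}$ post-composed with $\Pi$, or a coning off when the traces lie in a small geodesic ball of $N$ — performed with added $H^1$-energy bounded by (a multiple of) $\int_{\partial K}|\nabla u_\varepsilon|^2\,ds$, which the choice of grid has made negligible. Gluing yields $w_\varepsilon\in H^1(B',N)$ that is continuous on $B'$, smooth off the grid, and converges to $u$ in $H^1$; a final mollification of the now-continuous $w_\varepsilon$ (continuity forces uniform convergence of its mollifications, hence values in $V_\delta$) followed by $\Pi$ produces the required element of $C^\infty(B',N)$.

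I expect the surgery on the bad set to be the main obstacle: making the replacement map take values exactly in $N$ while contributing only vanishing $H^1$-energy rests on (i) the Fubini selection of a grid along which the boundary traces carry little energy, (ii) the one-dimensional Sobolev embedding giving honest continuity of $u$ along edges, and (iii) controlling the extension energy across a region $K$ that may be topologically complicated and whose boundary loops must be filled inside $N$. It is exactly this combination, which needs slicing down to dimension $1<2$, that is unavailable in dimension $\ge3$ — there the statement genuinely fails, the model obstruction being $x\mapsto x/|x|\in H^1(B^3,S^2)$, which is not $H^1$-approximable by smooth $S^2$-valued maps. The remaining ingredients (the embedding and retraction, the mollification, the distance estimate, and the final projection) are routine, as is the minor modification needed to accommodate a nonempty $C^1$ boundary of $M$, where one works with half-disc charts.
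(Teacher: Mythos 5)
The paper does not prove this lemma; it simply cites Section~4 of Schoen--Uhlenbeck, whose argument is exactly the first half of your proposal: embed $N\subset\mathbb{R}^k$, mollify, estimate $\mathrm{dist}(u_\varepsilon(x),N)$, and compose with the nearest-point retraction $\Pi$. Up to that point your sketch is correct. What you missed is the decisive two-dimensional observation hidden in your own displayed estimate: since $|B_{2\varepsilon}(x)|\simeq\varepsilon^2$ when $\dim M=2$, the bound reads
$$
\mathrm{dist}\bigl(u_\varepsilon(x),N\bigr)^2\le C\varepsilon^{2}\fint_{B_{2\varepsilon}(x)}|\nabla u|^2\,dy\;\le\;C'\int_{B_{2\varepsilon}(x)}|\nabla u|^2\,dy,
$$
and the right-hand side tends to $0$ \emph{uniformly in $x$} as $\varepsilon\to0$, by the absolute continuity of the integral of the fixed $L^1$ function $|\nabla u|^2$. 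Hence for all small $\varepsilon$ the mollified map takes values in the tubular neighbourhood $V_\delta$ at \emph{every} point, the bad set $A_\varepsilon$ is empty, and $\Pi\circ u_\varepsilon$ already finishes the proof. This uniform smallness is precisely what distinguishes the critical dimension $2$ (where density holds) from dimension $\ge3$ (where it fails, as your $x/|x|$ example shows); your parenthetical claim that $u_\varepsilon\in V_\delta$ everywhere ``when $\dim M>2$ via Morrey'' is backwards --- Morrey's embedding for $W^{1,2}$ requires the dimension to be \emph{less} than $2$, and for $\dim M>2$ the statement itself is false.

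Consequently the entire grid-surgery stage is unnecessary, and it is just as well, because as sketched it is the weakest part of the argument: $|A_\varepsilon|$ small does not make a fixed ($h$-)neighbourhood of $A_\varepsilon$ small in measure, the number of bad squares is not controlled, and the energy of the fillings of the boundary loops inside $N$ (and the fact that those loops can be filled in $N$ at all) is asserted rather than proved. If you deleted that stage and instead observed the uniform estimate above, your proof would coincide with the cited Schoen--Uhlenbeck argument; if you insist on keeping the repair scheme, those points constitute genuine gaps that would have to be closed.
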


\section{Faedo-Galerkin scheme}
\label{T1sec3}

In this section, we carry out  the Faedo-Galerkin approximation to the system (\ref{T11.6})--(\ref{T11.8}).
For $M>0$, we introduce a cut off function $\chi_M$ given by
$$
\chi_M(s)=M\chi(M^{-1}s),\qquad s\geq0,
$$
where
$$
\chi(s)=\left\{\begin{array}{lr}s,&0\leq s\leq 1,\\
1,&s>1.\end{array}\right.
$$
For $N>0$, set
$$
S_N=\mu(\theta)(\nabla u+\nabla u^T)+\frac{1}{N}|\nabla u|^{\frac{2}{9}}\nabla u.
$$
For $n\in\mathbb N$, set $X_n=\text{span}\{e^{ikx}\}_{k=-n}^{n}$
and
$$
X_{n,\text{div}}=\{u\in X_n|\text{div}u=0\}.
$$

For given $M$, $N$ and $n$, we look for a solution $(u, d, \theta)$ with $u\in C([0, T_n]; X_{n,\textmd{div}})$ to the following system
\begin{eqnarray}
&&\frac{d}{dt}\int_\Omega u\cdot wdx+\int_\Omega (S_N-\nabla d\odot\nabla d):\nabla w dx\nonumber\\
&&~~~~~~~~~+\int_\Omega(u\cdot\nabla)u\cdot wdx=0,\qquad\forall w\in X_{n, \textmd{div}},\label{T12.6}\\
&&d_t+(u\cdot \nabla)d=\Delta d+\chi_M(|\nabla d|^2)d,\label{T12.7}\\
&&\theta_t+u\cdot\nabla\theta=\Delta\theta+S_N:\nabla u+|\Delta d+\chi_M(|\nabla d|^2)d|^2,\label{T12.8}
\end{eqnarray}
supplemented with the initial and boundary conditions (\ref{T11.4})--(\ref{T11.5}).

Suppose that the functions $u_0, d_0$ and $\theta_0$ are $2\pi$ periodic and satisfy
\begin{equation}\label{T12.12}
u_0\in X_{n, \textmd{div}}, \quad d_0\in C^\infty(\bar\Omega), \quad |d_0|\leq 1, \quad \theta_0\in C^\infty(\bar\Omega), \quad\inf_{x\in\bar\Omega}\geq\underline\theta_0
\end{equation}
for some positive constant $\underline\theta_0$.

By the standard parabolic theory, for given $u\in C([0, T]; X_{n,\textmd{div}})$, one can solve (\ref{T12.7}) and (\ref{T12.8}) uniquely with initial and
boundary conditions (\ref{T11.4})--(\ref{T11.5})
under assumption (\ref{T12.12}). Besides, it follows from the maximal principle for parabolic equations that
\begin{equation}\label{T12.14}
\inf_{(x,t)\in Q_T}\theta\geq\underline\theta_0,\quad\text{and}\quad\sup_{(x,t)\in Q_T}|d|\leq1.
\end{equation}
Thus, the functions $\theta$ and $d$ appeared in (\ref{T12.6}) can be uniquely expressed through (\ref{T12.7})--(\ref{T12.8}) with (\ref{T11.4})--(\ref{T11.5}) for given $u$. Accordingly, problem (\ref{T12.6})--(\ref{T12.8}) with (\ref{T11.4})--(\ref{T11.5}) can be uniquely solved via the
standard fixed point theory, at least in a (possibly) short time interval $(0, T_n)$. Taking $w=u(t)$ in (\ref{T12.6}), multiplying (\ref{T12.7})
by $-\Delta d$ and integrating over $\Omega$, summing the resulting equations up, noticing that
$$
\textmd{div}(\nabla d\odot\nabla d)=\nabla\left(\tfrac{|\nabla d|^2}{2}\right)+\Delta d\cdot\nabla d,
$$
and using (\ref{T12.14}), we obtain the inequality
\begin{align}
&\frac{d}{dt}\int_\Omega(|u(t)|^2+|\nabla d(t)|^2)dx+\int_\Omega(2S_N:\nabla u+|\Delta d|^2)dx\nonumber\\
\leq&\int_\Omega|\chi_M(|\nabla d|^2)|^2dx\leq 4\pi M^2.\label{T12.15}
\end{align}
This implies immediately that the existence time $T_n$ can be taken as any finite time $T$. Hence, for any given $M, N$ and $n$, we have
established the existence of solution $(u, d, \theta)$ to the approximate system (\ref{T12.6})--(\ref{T12.8}) with (\ref{T11.4})--(\ref{T11.5}).

The pressure $p$ is determined through
$$
p=\Delta^{-1}\textmd{div}\textmd{div}(S_N-\nabla d\odot\nabla d-u\otimes u),
$$
more preciously, $p$ is defined as the unique solution to the problem
\begin{equation}\label{T12.16}
\left\{\begin{array}{l}
\Delta p=\textmd{div}\textmd{div}(S_N-\nabla d\odot\nabla d-u\otimes u)\quad \mbox{in }\Omega,\\
p \mbox{ is }2\pi \mbox{ periodic }, \quad \int_\Omega pdx=0.
\end{array}\right.
\end{equation}
With $p$ thus defined, we can rewrite (\ref{T12.6}) as
\begin{equation}\label{T12.17}
\frac{d}{dt}\int_\Omega u\cdot wdx+\int_\Omega [(S_N-\nabla d\odot\nabla d-u\otimes u):\nabla w-p\textmd{div}w]dx=0,\quad\forall w\in X_n.
\end{equation}
In fact, for any $w\in X_n$, decomposed as $w=w_1+\nabla q$ with $w_1\in X_{n, \textmd{div}}$ and $q\in X_n$,
it follows from (\ref{T12.6}) and (\ref{T12.16}) that
\begin{equation*}
\frac{d}{dt}\int_\Omega u\cdot w_1dx+\int_\Omega [(S_N-\nabla d\odot\nabla d-u\otimes u):\nabla w_1dx=0,
\end{equation*}
and
\begin{align*}
-\int_\Omega p\textmd{div}wdx=&-\int_\Omega p\Delta qdx=-\int_\Omega\Delta pqdx\\
=&-\int_\Omega\textmd{div}\textmd{div}(S_N-\nabla d\odot\nabla d-u\otimes u)qdx\\
=&\int_\Omega\textmd{div}(S_N-\nabla d\odot\nabla d-u\otimes u)\cdot\nabla qdx.
\end{align*}
Summing up the above two identities and noticing that $\int_\Omega u\cdot w_1dx=\int_\Omega u\cdot wdx$, we obtain (\ref{T12.17}).

Collecting all the above statements, we have proven the following

\begin{proposition}\label{T1prop2.1}
Under the assumption (\ref{T12.12}), for any given $M>0$, $N>0$ and $n\in\mathbb N$, there is a unique solution $(u, d, \theta, p)$ to the system (\ref{T12.6})--(\ref{T12.8}) with (\ref{T11.4})--(\ref{T11.5}), such that (\ref{T12.14})--(\ref{T12.17}) hold true.
\end{proposition}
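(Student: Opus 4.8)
The plan is to construct the solution $(u,d,\theta,p)$ by a fixed-point argument on the velocity, exactly along the lines sketched in the paragraph preceding the statement, and then to verify that the constructed triple enjoys properties \eqref{T12.14}--\eqref{T12.17}. First I would fix $u\in C([0,T];X_{n,\textmd{div}})$ and treat \eqref{T12.7}, \eqref{T12.8} as two (decoupled, once $d$ is known) linear parabolic equations with smooth coefficients on the torus. Since $\chi_M$ is globally Lipschitz and bounded by $M$, the map $d\mapsto\chi_M(|\nabla d|^2)d$ is a bounded, locally Lipschitz lower-order perturbation, so standard linear parabolic theory (e.g.\ semigroup methods, or Galerkin in the spatial variable) gives a unique global smooth solution $d$ of \eqref{T12.7} with $|d|\le1$ by the maximum principle applied to $\frac12\partial_t|d|^2\le(1-|d|^2)\chi_M(|\nabla d|^2)$ together with $|d_0|\le1$; feeding this $d$ into the right-hand side of \eqref{T12.8}, which is then a nonnegative smooth source, another application of the maximum principle gives $\theta\ge\underline\theta_0$. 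This establishes \eqref{T12.14} for the fixed-point iterates and hence for the solution.

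Next I would set up the contraction. The right-hand side of \eqref{T12.6}, after solving for $\frac{d}{dt}\int_\Omega u\cdot w\,dx$ for all $w$ in the finite-dimensional space $X_{n,\textmd{div}}$, becomes an ODE system $\dot{U}=F(U)$ in the coefficient vector $U(t)$, where $F$ depends on $U$ directly through $(u\cdot\nabla)u$ and $S_N$ and indirectly through the solution map $u\mapsto d$ via the $\nabla d\odot\nabla d$ term. On the ball $\{\|u\|_{C([0,T_n];X_n)}\le R\}$ all quantities are controlled: in finite dimensions all norms are equivalent, $\mu$ is bounded, $|\nabla u|^{2/9}\nabla u$ is continuous in $u$, and the solution operator $u\mapsto d$ is Lipschitz from $C([0,T_n];X_n)$ into $C([0,T_n];C^1(\overline\Omega))$ by a Gr\"onwall estimate on the difference of two solutions of \eqref{T12.7}. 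Hence $F$ is locally Lipschitz and Picard--Lindel\"of yields a unique local solution on some $(0,T_n)$; the a priori bound \eqref{T12.15}, obtained by testing \eqref{T12.6} with $w=u(t)$, testing \eqref{T12.7} with $-\Delta d$, adding, and using the identity $\textmd{div}(\nabla d\odot\nabla d)=\nabla(\tfrac{|\nabla d|^2}{2})+\Delta d\cdot\nabla d$ together with $|\chi_M|\le M$ and $|d|\le1$, shows $\|u(t)\|_{L^2}+\|\nabla d(t)\|_{L^2}$ stays finite on any finite interval, so the finite-dimensional $U(t)$ cannot blow up and $T_n$ may be taken to be the arbitrary prescribed $T$.

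It remains to handle the pressure and \eqref{T12.17}. I would \emph{define} $p$ as the unique mean-zero $2\pi$-periodic solution of \eqref{T12.16}; existence and uniqueness is immediate since the right-hand side $\textmd{div}\,\textmd{div}(S_N-\nabla d\odot\nabla d-u\otimes u)$ has zero mean over $\Omega$ (it is a divergence of a periodic function) and the periodic Laplacian is invertible on mean-zero functions. Then, for arbitrary $w\in X_n$, I would use the Helmholtz decomposition $w=w_1+\nabla q$ with $w_1\in X_{n,\textmd{div}}$, $q\in X_n$ (available since $X_n$ is spanned by the Fourier modes $e^{ikx}$), apply \eqref{T12.6} to the test function $w_1$, compute $-\int_\Omega p\,\textmd{div}\,w\,dx=\int_\Omega\textmd{div}(S_N-\nabla d\odot\nabla d-u\otimes u)\cdot\nabla q\,dx$ by two integrations by parts and \eqref{T12.16}, and add the two identities, using $\int_\Omega u\cdot w_1\,dx=\int_\Omega u\cdot w\,dx$ (orthogonality of $u\in X_{n,\textmd{div}}$ to the gradient part). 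This is precisely the computation already displayed in the excerpt and yields \eqref{T12.17}. The main obstacle in the whole argument is the verification that the composed map $u\mapsto d\mapsto\big(\text{RHS of }\eqref{T12.6}\big)$ is locally Lipschitz with the nonsmooth factor $|\nabla u|^{2/9}\nabla u$ present; this is handled by restricting to a bounded ball where $\xi\mapsto|\xi|^{2/9}\xi$ is uniformly continuous, and by the Gr\"onwall stability estimate for \eqref{T12.7}, neither of which presents a genuine difficulty at this fixed $(n,M,N)$ level.
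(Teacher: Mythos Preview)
Your proposal is correct and follows essentially the same approach as the paper: solve the parabolic equations \eqref{T12.7}--\eqref{T12.8} for given $u$, invoke the maximum principle for \eqref{T12.14}, run a fixed-point/ODE argument in the finite-dimensional space $X_{n,\textmd{div}}$ to get local existence, use the energy inequality \eqref{T12.15} to continue to any $T$, and finally define $p$ by \eqref{T12.16} and deduce \eqref{T12.17} via the Helmholtz decomposition. One very minor remark: you write that $\xi\mapsto|\xi|^{2/9}\xi$ is ``uniformly continuous'' on bounded sets, which would only give Peano existence; in fact this map is $C^1$ (its gradient is $O(|\xi|^{2/9})$ and hence continuous at the origin), so it is locally Lipschitz and Picard--Lindel\"of applies as you intend.
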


\section{The limit $n\rightarrow\infty$}\label{T1sec4}

In the previous section, we have proven that, for any given $M>0, N>0$ and $n\in\mathbb N$, the problem (\ref{T12.6})--(\ref{T12.8})
with (\ref{T11.4})--(\ref{T11.5}) has a unique solution $(u_{M,N,n}, d_{M,N,n}, \theta_{M,N,n}, p_{M,N,n})$.
In this section, we study the limit $n\rightarrow\infty$ for fixed $M$ and $N$ to prove the solvability of the system (\ref{T11.6})--(\ref{T11.8})
with the initial and boundary conditions (\ref{T11.4})--(\ref{T11.5}). More precisely, we will prove the following result.

\begin{proposition}\label{T1prop3.1}
Assume that the $2\pi$ periodic functions $u_0, d_0$ and $\theta_0$ satisfy
$$
u_0\in L^2_\sigma(\Omega),\quad d_0\in H^1(\Omega),\quad\sup_{x\in\Omega}|d_0|\leq 1, \quad\theta_0\in L^1(\Omega),\quad\inf_{x\in\Omega}\theta_0\geq\underline\theta_0
$$
for some positive constant $\underline\theta_0$.

Then for any $M>0$ and $N>0$, the system (\ref{T11.6})--(\ref{T11.8}) with (\ref{T11.4})--(\ref{T11.5})
has a weak solution $(u, d, \theta, p)$ in $Q_T$ for any $T>0$, such that
\begin{eqnarray}
&\inf_{(x, t)\in Q_T}\theta\geq\underline\theta_0, \qquad\sup_{(x,t)\in Q_T}|d|\leq 1,\nonumber\\
&\int_\Omega\theta(t)dx\leq Q_{M,N}(t),\qquad \int_0^t\int_\Omega|\nabla\theta|^qdxd\tau\leq C(q, T)[Q_{M,N}(t)]^q,\label{T13.0-6}
\end{eqnarray}
for $t\in[0, T]$ and $q\in(1, \tfrac{4}{3})$, where
$$
Q_{M,N}(t)=\int_\Omega\theta_0dx+\int_0^t\int_\Omega(S_N:\nabla u+|\Delta d+\chi_M(|\nabla d|^2)d|^2)dxd\tau.
$$
\end{proposition}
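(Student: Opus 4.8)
The plan is to pass to the limit $n\to\infty$ in the Faedo-Galerkin solutions $(u_{M,N,n},d_{M,N,n},\theta_{M,N,n},p_{M,N,n})$ furnished by Proposition \ref{T1prop2.1} (after first regularizing the general data $u_0,d_0,\theta_0$ by data satisfying (\ref{T12.12}), with convergent initial energies), the crux being compactness. \emph{Uniform estimates.} Integrating (\ref{T12.15}) in time bounds $u_n$ in $L^\infty(0,T;L^2)$ and $\nabla u_n$ in $L^2(Q_T)\cap L^{20/9}(Q_T)$ (the latter from the $\tfrac1N|\nabla u_n|^{20/9}$ term), hence, by Korn's inequality (Lemma \ref{T1lem2.4}) and a Poincar\'e-type inequality, $u_n$ in $L^2(0,T;H^1_{\text{per}})\cap L^{20/9}(0,T;W^{1,20/9}_{\text{per}})$; it also bounds $d_n$ in $L^\infty(0,T;H^1)\cap L^2(0,T;H^2)$ with $|d_n|\le1$ (by (\ref{T12.14})), so $\nabla d_n$ and $u_n$ lie in $L^4(Q_T)$ by Lemma \ref{T1lem2.1}. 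For the temperature, (\ref{T12.14}) gives $\theta_n\ge\underline\theta_0$, and integrating (\ref{T12.8}) in $x$ (the transport term vanishing since $\mathrm{div}\,u_n=0$) gives $\int_\Omega\theta_n(t)\,dx=Q_{M,N,n}(t)$, so $\theta_n$ is bounded in $L^\infty(0,T;L^1)$; the Boccardo-Gallouet/De Giorgi truncation technique for parabolic equations with $L^1$ data (again using $\mathrm{div}\,u_n=0$ for the transport term) then yields $\int_0^t\!\int_\Omega|\nabla\theta_n|^q\,dxds\le C(q,T)[Q_{M,N,n}(t)]^q$ for $q\in(1,\tfrac43)$, and by interpolation $\theta_n$ is bounded in $L^\rho(Q_T)$ for some $\rho\in(1,2)$. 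Finally, reading off time derivatives: $\partial_t u_n$ from (\ref{T12.17}) is bounded in $L^{20/11}(0,T;W^{-1,20/11}_{\text{per}})$ (all of $\mu(\theta_n)(\nabla u_n+\nabla u_n^T)$, $|\nabla u_n|^{2/9}\nabla u_n$, $\nabla d_n\odot\nabla d_n$ and $u_n\otimes u_n$ lying in $L^{20/11}(Q_T)$, and the projection onto $X_{n,\mathrm{div}}$ being bounded on $W^{1,20/9}_{\text{per}}$ uniformly in $n$); $\partial_t d_n=\Delta d_n+\chi_M(|\nabla d_n|^2)d_n-u_n\cdot\nabla d_n$ from (\ref{T12.7}) is bounded in $L^2(Q_T)$; and $\partial_t\theta_n$ from (\ref{T12.8}) is bounded in $L^1(0,T;H^{-2}_{\text{per}})$.

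By Lemmas \ref{T1lem2.2} and \ref{T1lem2.3}, along a subsequence $u_n\to u$ in $L^2(Q_T)$, hence (interpolating with $L^4$) in $L^p(Q_T)$ for $p<4$ and a.e., with $u_n(t)\rightharpoonup u(t)$ in $L^2$ for each $t$; $d_n\to d$ in $C([0,T];L^2)$ and $\nabla d_n\to\nabla d$ in $L^2(Q_T)$, hence in $L^p(Q_T)$ for $p<4$ and a.e.; and $\theta_n\to\theta$ in $L^q(Q_T)$ and a.e. Together with the weak limits $\nabla u_n\rightharpoonup\nabla u$ in $L^2$ and in $L^{20/9}$, $|\nabla u_n|^{2/9}\nabla u_n\rightharpoonup\bar\sigma$ in $L^{20/11}$, $\Delta d_n\rightharpoonup\Delta d$ and $\partial_t d_n\rightharpoonup\partial_t d$ in $L^2(Q_T)$, $\mu(\theta_n)(\nabla u_n+\nabla u_n^T)\rightharpoonup\mu(\theta)(\nabla u+\nabla u^T)$ in $L^2$ (since $\mu(\theta_n)\to\mu(\theta)$ a.e.\ and boundedly) and $\nabla\theta_n\rightharpoonup\nabla\theta$ in $L^q$, I would first pass to the limit in (\ref{T12.7}) (noting $u_n\cdot\nabla d_n\to u\cdot\nabla d$ in $L^2(Q_T)$ and $\chi_M(|\nabla d_n|^2)d_n\to\chi_M(|\nabla d|^2)d$ in $L^2(Q_T)$ by dominated convergence), obtaining that $d$ solves (\ref{T11.7}) a.e.\ in $Q_T$ with $d(0)=d_0$, and in (\ref{T12.17}), obtaining the weak form of (\ref{T11.6}) \emph{but with $\bar\sigma$ in place of $|\nabla u|^{2/9}\nabla u$}. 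To identify $\bar\sigma$ I would exploit the energy structure: testing the Galerkin $u$-equation with $u_n$ and (\ref{T12.7}) with $-\Delta d_n$, adding and integrating in time yields an exact coupled energy identity; the limiting equations — which it is legitimate to test with $u$, since $u\in L^{20/9}(0,T;W^{1,20/9}_{\text{per}})$ and $\partial_t u\in L^{20/11}(0,T;(W^{1,20/9}_{\text{per}})^{*})$, and with $-\Delta d$ — yield the corresponding limit identity; comparing them (the coupling term $\int\nabla d_n\odot\nabla d_n:\nabla u_n$ passes to the limit because $\nabla d_n\to\nabla d$ strongly in $L^4(Q_T)$ while $\nabla u_n\rightharpoonup\nabla u$, and all remaining terms pass by weak lower semicontinuity) gives $\limsup_n\int_{Q_T}|\nabla u_n|^{20/9}\le\int_{Q_T}\bar\sigma:\nabla u$. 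The Minty monotonicity argument, based on $(|\xi|^{2/9}\xi-|\eta|^{2/9}\eta)\cdot(\xi-\eta)\ge0$ (cf.\ (\ref{A.111})), then forces $\bar\sigma=|\nabla u|^{2/9}\nabla u$ a.e., so $u$ is a genuine weak solution of (\ref{T11.6}); moreover $\int_{Q_T}|\nabla u_n|^{20/9}\to\int_{Q_T}|\nabla u|^{20/9}$, so Lemma \ref{T1lem2.7} (with $p=\tfrac{20}{9}$) upgrades this to $\nabla u_n\to\nabla u$ strongly in $L^{20/9}(Q_T)\hookrightarrow L^2(Q_T)$.

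For (\ref{T11.8}) one first upgrades $\Delta d_n\rightharpoonup\Delta d$ to strong convergence: testing (\ref{T12.7}) with $\partial_t d_n$, integrating in time and passing to the limit as above (the right-hand side $\chi_M(|\nabla d_n|^2)d_n-u_n\cdot\nabla d_n$ converging strongly in $L^2(Q_T)$) gives $\int_{Q_T}|\partial_t d_n|^2\to\int_{Q_T}|\partial_t d|^2$, hence $\partial_t d_n\to\partial_t d$ and therefore $\Delta d_n\to\Delta d$ strongly in $L^2(Q_T)$. Combined with $\nabla u_n\to\nabla u$ in $L^2(Q_T)$ and $\mu(\theta_n)\to\mu(\theta)$ a.e.\ boundedly, this yields $S_N:\nabla u_n+|\Delta d_n+\chi_M(|\nabla d_n|^2)d_n|^2\to S_N:\nabla u+|\Delta d+\chi_M(|\nabla d|^2)d|^2$ strongly in $L^1(Q_T)$, while $u_n\theta_n\to u\theta$ strongly in $L^{r'}(Q_T)$ for some $r'>1$; hence one may pass to the limit in the weak formulation of (\ref{T12.8}) to obtain the integral identity of Definition \ref{T1def1.1}(iii) for $\theta$ (with $S_N$, $\chi_M$). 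Integrating it in $x$ and using the nonnegativity of the source gives $\int_\Omega\theta(t)\,dx\le Q_{M,N}(t)$, and passing to the lower limit in the uniform Boccardo-Gallouet bound gives $\int_0^t\!\int_\Omega|\nabla\theta|^q\,dxd\tau\le C(q,T)[Q_{M,N}(t)]^q$; finally $\theta\ge\underline\theta_0$ and $|d|\le1$ a.e.\ survive the a.e.\ limits, and $p$ is recovered from (\ref{T12.16}).

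The one genuinely hard step is the identification of $\bar\sigma$, i.e.\ the passage from weak to strong convergence of $\nabla u_n$. Unlike the director equation — whose principal part is linear, so that strong convergence of $\Delta d_n$ follows from strong convergence of its lower-order right-hand side — the degenerate $p$-Laplacian flux $\frac1N|\nabla u|^{2/9}\nabla u$ has no linear structure, and one is forced to combine the somewhat delicate limit of the \emph{coupled} $(u,d)$ energy identity (checking carefully the admissibility of $u$ and $-\Delta d$ as test functions in the limit equations) with the monotonicity of $\xi\mapsto|\xi|^{2/9}\xi$; this strong convergence of $\nabla u_n$, together with the parallel strong convergence of $\Delta d_n$, is precisely what makes the quadratic source terms of the temperature equation converge in $L^1(Q_T)$ and thus lets (\ref{T11.8}) pass to the limit with no defect.
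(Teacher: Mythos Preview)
Your proof is correct and follows the same overall strategy as the paper: regularize the data, obtain uniform Galerkin estimates, extract weak limits by Aubin--Lions/Simon compactness, and then use the \emph{coupled} $(u,d)$ energy identity at both the approximate and limit levels to upgrade the weak convergence of the dissipative terms to strong convergence, which is precisely what is needed to pass the quadratic source in the $\theta$-equation to the limit in $L^1$.

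The differences are in implementation rather than in idea. For the temperature gradient bound you invoke Boccardo--Gallou\"et truncation, while the paper multiplies (\ref{T12.8}) by $\alpha\theta_n^{\alpha-1}$ and interpolates (giving the explicit bound $\int|\nabla\theta_n^{\alpha/2}|^2\le C[Q_{M,N,n}]^\alpha$); both lead to the same $L^q$ estimate for $q\in(1,\tfrac43)$. For compactness of $u_n$ the paper avoids bounding $\partial_tu_n$ in a dual space and instead estimates translations $\|\tau_hu_n-u_n\|_{L^2}$ directly from (\ref{T12.17}), applying Lemma~\ref{T1lem2.2}; your route via a uniform $\partial_tu_n$ bound and Lemma~\ref{T1lem2.3} is equally valid. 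The most visible difference is in handling $\bar\sigma$: you run a Minty monotonicity argument to identify $\bar\sigma=|\nabla u|^{2/9}\nabla u$, then invoke Lemma~\ref{T1lem2.7} for strong convergence of $\nabla u_n$, and finally recover strong convergence of $\Delta d_n$ by a separate argument (testing with $\partial_td_n$). The paper instead applies Lemma~\ref{T1lem2.7} in one stroke to the three-term sum $\frac{\mu(\theta_n)}{2}|\nabla u_n+\nabla u_n^T|^2+\tfrac1N|\nabla u_n|^{20/9}+|\Delta d_n|^2$, obtaining simultaneously the strong convergence of $\nabla u_n$ in $L^{20/9}$ and of $\Delta d_n$ in $L^2$ and the identification of $\bar\sigma$; this is slightly more economical but not essentially different. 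One small slip: you write $\nabla d_n\to\nabla d$ strongly in $L^4(Q_T)$, whereas interpolation only gives $L^p$ for $p<4$; this is harmless since $\nabla u_n$ lies in $L^{20/9}$, so $L^{20/11}$ convergence of $\nabla d_n\odot\nabla d_n$ suffices.
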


\begin{proof}
Choose $2\pi$ periodic funtions $\{(u_{0,n}, d_{0,n}, \theta_{0,n})\}\subseteq C^\infty(\bar\Omega)$, such that
$$
u_{0,n}\in X_{n,\textmd{div}}, \quad\sup_{x\in\Omega}|d_{0,n}|\leq1, \quad\inf_{x\in\Omega}\theta_{0,n}\geq\underline\theta_0,
$$
and
$$
u_{0,n}\rightarrow u_0\mbox{ in }L^2(\Omega),\quad d_{0,n}\rightarrow d_0\mbox{ in }H^1(\Omega),\quad \theta_{0,n}\rightarrow\theta_0\mbox{ in }L^1(\Omega).
$$

By Proposition \ref{T1prop2.1}, for any $n\in\mathbb N$, there is a unique solution $(u_n, d_n, \theta_n, p_n)$ to the system (\ref{T12.6})--(\ref{T12.8}) with the periodic boundary condition (\ref{T11.5}) and initial data $(u_{0, n}, d_{0,n}, \theta_{0,n})$, such that
$$
\inf_{(x, t)\in Q_T}\theta_n\geq\underline\theta_0,\quad\sup_{(x, t)\in Q_T}|d_n|\leq 1
$$
and
\begin{equation}
\sup_{0\leq t\leq  T}\int_\Omega(|u_n|^2+|\nabla d_n|^2)dx+\int_0^T\int_\Omega(|\nabla u_n|^{\frac{20}{9}}+|\nabla^2d_n|^2)dxdt\leq C(M,N,T).\label{T13.1}
\end{equation}
It follows from the Gagliado-Nirenberg inequality and (\ref{T13.1}) that
\begin{equation}
\int_0^T\int_\Omega(|u_n|^{\frac{40}{9}}+|\nabla d_n|^4)dxdt\leq C(M, N, T). \label{T13.3}
\end{equation}
These two inequalities, together with equation (\ref{T12.7}), lead to
\begin{align}
\|\partial_td_n\|_{L^2(Q_T)}\leq& C(\|u_n\|_{L^4(Q_T)}\|\nabla d_n\|_{L^4(Q_T)}+\|\Delta d_n+\chi_M(|\nabla d_n|^2)d_n\|_{L^2(Q_T)})\nonumber\\
\leq &C(M,N,T).\label{T13.3-1}
\end{align}
Applying elliptic estimates to (\ref{T12.16}), it follows from (\ref{T13.1}) and (\ref{T13.3}) that
\begin{align}
&\int_0^T\int_\Omega|p_n|^{\frac{20}{11}}dxdt\leq C\int_0^T\int_\Omega\left(|S_N|^{\frac{20}{11}}+|u_n|^{\frac{40}{11}}+|\nabla d_n|^{\frac{40}{11}}\right)dxdt\nonumber\\
\leq &C\int_0^T\int_\Omega\left(|u_n|^{\frac{40}{11}}+|\nabla d_n|^{\frac{40}{11}}+N^{-\frac{20}{11}}|\nabla u_n|^{\frac{20}{9}}+|\nabla u_n|^{\frac{20}{11}}\right)dxdt\nonumber\\
\leq&C\int_0^T\int_\Omega\left(1+|u_n|^{\frac{40}{9}}+|\nabla d_n|^4+|\nabla u_n|^{\frac{20}{9}}\right)dxdt\leq C(M, N, T).\label{T13.4}
\end{align}

By the aid of (\ref{T13.1}) and (\ref{T13.3}), it follows from identity (\ref{T12.6}) that
\begin{align*}
&\int_\Omega(u_n(x,t+h)-u_n(x,t))\cdot wdx\\
=&\int_t^{t+h}\int_\Omega(\nabla d_n\odot\nabla d_n+u_n\otimes u_n-S_N):\nabla wdxd\tau\\
\leq&C\int_t^{t+h}\int_\Omega(|\nabla d_n|^2+|u_n|^2+|\nabla u_n|^{\frac{11}{9}}+|\nabla u_n|)|\nabla w|dxd\tau\\
\leq&Ch^{\frac{1}{2}}\|\nabla w\|_{L^2(\Omega)}\left(\int_t^{t+h}\int_\Omega(|\nabla d_n|^4+|u_n|^4+|\nabla u_n|^2)dxd\tau\right)^{\frac{1}{2}}\\
&+Ch^{\frac{9}{20}}\|\nabla w\|_{L^{\frac{20}{9}}(\Omega)}\left(\int_t^{t+h}\int_\Omega|\nabla u_n|^{\frac{20}{9}}dxd\tau\right)^{\frac{9}{20}}\\
\leq&C(M,N,T)\left(\|\nabla w\|_{L^2(\Omega)}h^{\frac{1}{2}}+\|\nabla w\|_{L^{\frac{20}{9}}(\Omega)}h^{\frac{9}{20}}\right)\\
\leq&C(M,N,T)\|\nabla w\|_{L^{\frac{20}{9}}(\Omega)}h^{\frac{9}{20}}
\end{align*}
for any $w\in X_{n,\text{div}}$ and $t\in[0, T-h]$, from which, by taking $w=u_n(x,t+h)-u_n(x,t)$, one arrives
\begin{equation}
\|\tau_hu_n-u_n\|_{L^2(0,T-h; L^2(\Omega))}^2\leq C(M,N,T)h^{\frac{9}{20}}\|\nabla u_n\|_{L^{\frac{20}{9}}(Q_T)}\leq C(M,N,T)h^{\frac{9}{20}},\label{T13.4-1}
\end{equation}
where $\tau_hu_n(t)=u_n(t+h)$.

Integrating (\ref{T12.8}) over $\Omega$ yields
\begin{equation}\label{T13.4-2}
\int_\Omega\theta_n(t)dx=Q_{M, N, n}(t),
\end{equation}
where
\begin{align*}
Q_{M,N,n}(t)=&\int_\Omega\theta_{0,n}dx+\int_0^t\int_\Omega\left(\frac{\mu(\theta_n)}{2}|\nabla u_n+\nabla u_n^T|^2\right.\\
&\left.+\frac{1}{N}|\nabla u_n|^{\frac{20}{9}}+|\Delta d_n+\chi_M(|\nabla d_n|^2)d_n|^2\right)dxd\tau.
\end{align*}
Multiplying (\ref{T12.8}) by $H'(\theta)$ leads to
\begin{equation*}
\partial_tH(\theta)+u\nabla H(\theta)=\Delta H(\theta)+H'(\theta)(S_N:\nabla u+|\Delta d+\chi_M(|\nabla d|^2)d|^2)-H''(\theta)|\nabla\theta|^2,
\end{equation*}
from which, by taking $H(\theta)=\theta^\alpha$ with $\alpha\in(0, 1)$ and integrating over $\Omega$, one arrives
\begin{align*}
&\int_0^t\int_\Omega\left|\nabla\theta_n^{\frac{\alpha}{2}}\right|^2dxd\tau\leq C(\alpha)\int_\Omega\theta_n^\alpha(t)dx\\
\leq& C(\alpha)\left(\int_\Omega\theta_n(t)dx\right)^\alpha\leq C(\alpha)[Q_{M,N,n}(t)]^\alpha.
\end{align*}
Due to this and (\ref{T13.4-2}), it follows from the Sobolev embedding inequality and H\"older's inequality that
\begin{align*}
&\int_0^t\int_\Omega\theta_n^{\alpha+1}dxd\tau=\int_0^t\left\|\theta_n^{\frac{\alpha}{2}}\right\|_{L^{\frac{2(\alpha+1)}{\alpha}}
(\Omega)}^{\frac{2(\alpha+1)}{\alpha}}d\tau\nonumber\\
\leq&C\int_0^t\left(\|\theta_n^{\frac{\alpha}{2}}\|_{L^{\frac{2}{\alpha}}(\Omega)}^{\frac{1}{\alpha+1}}
\left\|\theta_n^{\frac{\alpha}{2}}\right\|_{H^1(\Omega)}^{\frac{\alpha}{\alpha+1}}\right)^{\frac{2(\alpha+1)}{\alpha}}d\tau\nonumber\\
\leq&C\int_0^t\|\theta_n\|_{L^1(\Omega)}\left(\left\|\theta_n^{\frac{\alpha}{2}}\right\|_{L^2(\Omega)}^2+\left\|\nabla\theta_n^{\frac{\alpha}{2}}
\right\|_{L^2(\Omega)}^2\right)d\tau\nonumber\\
\leq&C\int_0^t\left(\left\|\theta_n\right\|_{L^1(\Omega)}^{\alpha+1}+\|\theta_n\|_{L^1(\Omega)}
\left\|\nabla\theta_n^{\frac{\alpha}{2}}\right\|_{L^2(\Omega)}^2\right)d\tau\nonumber\\
\leq& C(\alpha)[Q_{M,N,n}(t)]^{\alpha+1},\quad\alpha\in(0, 1),
\end{align*}
or equivalently
\begin{equation}
\int_0^t\int_\Omega\theta_n^rdxd\tau\leq C(q)[Q_{M,N,n}(t)]^r,\quad r\in(1, 2). \label{T13.8}
\end{equation}
It follows from this and the H\"older's inequality that
\begin{align*}
&\int_0^t\int_\Omega|\nabla\theta_n|^{\frac{2(\alpha+1)}{3}}dxdt=\int_0^t\int_\Omega\left|\frac{2}{\alpha}\theta_n^{\frac{2-\alpha}{2}}
\nabla\theta_n^{\frac{\alpha}{2}}\right|^{\frac{2(\alpha+1)}{3}}dxdt\nonumber\\
\leq&\left(\frac{2}{\alpha}\right)^{\frac{2(\alpha+1)}{3}}\int_0^t\int_\Omega\theta_n^{\frac{(2-\alpha)(\alpha+1)}{3}}
|\nabla\theta_n^{\frac{\alpha}{2}}|^{\frac{2(\alpha+1)}{3}}dxdt\nonumber\\
\leq&\left(\frac{2}{\alpha}\right)^{\frac{2(\alpha+1)}{3}}\left(\int_0^t\int_\Omega\theta_n^{\alpha+1}dxdt\right)^{\frac{2-\alpha}{3}}
\left(\int_0^t\int_\Omega
|\nabla\theta_n^{\frac{\alpha}{2}}|^{2}dxdt\right)^{\frac{\alpha+1}{3}}\nonumber\\
\leq&C(\alpha)[Q_{M,N,n}(t)]^{\frac{2(\alpha+1)}{3}},\quad\forall\alpha\in(0,1),
\end{align*}
or equivalently
\begin{equation}
\int_0^t\int_\Omega|\nabla\theta_n|^qdxdt\leq C(q)[Q_{M,N,n}(t)]^q,\quad q\in(1,\tfrac{4}{3}). \label{T13.8-1}
\end{equation}

Combining (\ref{T13.1}), (\ref{T13.3}) (\ref{T13.8}), (\ref{T13.8-1}), and noticing that
$$
L^1(\Omega)\hookrightarrow(L^\infty(\Omega))^*\hookrightarrow\left(W^{1,\frac{q}{q-1}}_{\textmd{per}}(\Omega)\right)^*
=W^{-1,q}_{\textmd{per}}(\Omega),\quad q\in(1,\tfrac{4}{3}),
$$
one derives from equation (\ref{T12.8}) that
\begin{align}
&\|\partial_t\theta_n\|_{L^1(0, T; W^{-1, q}_{\textmd{per}}(\Omega))}\nonumber\\
\leq&C\|\nabla\theta_n-u_n\theta_n\|_{L^q(Q_T)}+C\|S_N:\nabla u_n+|\Delta d_n+\chi_M(|\nabla d_n|^2)d_n|^2\|_{L^1(Q_T)}\nonumber\\
\leq&C\left(\|\nabla\theta_n\|_{L^q(Q_T)}+\|u_n\|_{L^{\frac{40}{9}}(Q_T)}\|\theta_n\|_{L^{\frac{40q}{40-9q}}(Q_T)}\right)+C(M,N,T)\nonumber\\
\leq&C(q,M,N,T),\qquad q\in(1,\tfrac{4}{3}).  \label{T13.8-2}
\end{align}

Due to the a priori bounds (\ref{T13.1})--(\ref{T13.4}) and (\ref{T13.8-1}), there is a subsequence, still denoted by $(u_n, d_n, \theta_n, p_n)$,
such that
\begin{eqnarray*}
&&u_n\rightarrow u,~~~\mbox{ weakly in }L^{\frac{20}{9}}(0, T; W^{1, \frac{20}{9}}(\Omega)),\label{T13.12-1}\\
&&|\nabla u_n|^{\frac{2}{9}}\nabla u_n\rightarrow\overline{|\nabla u|^{\frac{2}{9}}\nabla u},~~~\mbox{ weakly in }L^{\frac{20}{11}}(Q_T),\label{T13.12-2-1}\\
&&d_n\rightarrow d,~~~\mbox{ weakly in }L^2(0, T; H^2(\Omega)),\label{T13.12-3}\\
&&\partial_t d_n\rightarrow\partial_t d,~~~\mbox{ weakly in }L^2(Q_T),\label{T13.12-4}\\
&&p_n\rightarrow p,~~~\mbox{ weakly in }L^{\frac{20}{11}}(Q_T),\label{T13.12-5}\\
&&\theta_n\rightarrow\theta,~~~\mbox{ weakly in }L^q(0, T; W^{1,q}(\Omega)),\quad\mbox{for } q\in(1,\tfrac{4}{3}),\label{T13.12-6}
\end{eqnarray*}
where $\overline{|\nabla u|^{\frac{2}{9}}\nabla u}$ denotes a vector value function, which may be different from $|\nabla u|^{\frac{2}{9}}\nabla u.$
Moreover, thanks again to the same a priori bounds, together with (\ref{T13.4-1}) and (\ref{T13.8-2}), one can apply Lemma \ref{T1lem2.2}
and Lemma \ref{T1lem2.3} and using the interpolation inequalities to conclude that
\begin{eqnarray}
&&u_n\rightarrow u,~~~\mbox{ strongly in }L^q(Q_T),~~~\mbox{ for }q\in(1, \tfrac{40}{9}),\label{T13.12}\\
&&d_n\rightarrow d,~~~\mbox{ strongly in }L^q(0, T; W^{1,q}(\Omega)),~~~\mbox{ for }q\in(1, 4),\label{T13.13}\\
&&\theta_n\rightarrow\theta,~~~\mbox{ strongly in }L^q(Q_T),~~~\mbox{ for }q\in(1, 2),\label{T13.14}
\end{eqnarray}
and thus, there is a subsequence, still denoted by $(u_n, d_n, \theta_n, p_n)$, such that
\begin{eqnarray}
&\hspace{-7mm}u_n(t)\rightarrow u(t),~~\mbox{ strongly in }L^q(\Omega),~~\mbox{ for }q\in(1, \tfrac{40}{9}),~~\mbox{ a.e. }t\in[0, T],\label{T13.14-1}\\
&\hspace{-7mm}d_n(t)\rightarrow d(t),~~\mbox{ strongly in }W^{1,q}(\Omega),~~\mbox{ for }q\in (1, 4),~~\mbox{ a.e. }t\in[0, T].\label{T13.14-2}
\end{eqnarray}
Thanks to these convergences, we can take the limit $n\rightarrow\infty$ to conclude that the limit function $(u, d, p, \theta)$ satisfies
\begin{equation}\label{T13.15}
u_t+(u\cdot\nabla)u+\nabla p=\textmd{div}(\mu(\theta)(\nabla u+\nabla u^T)+\frac{1}{N}\overline{|\nabla u|^{\frac{2}{9}}\nabla u}-\nabla d\odot\nabla d)
\end{equation}
in $\mathcal D'(Q_T)$, and
\begin{equation}\label{T13.16}
d_t+(u\cdot\nabla)d=\Delta d+\chi_M(|\nabla d|^2)d,\quad\mbox{ a.e. in }Q_T.
\end{equation}

Choosing $w=u_n(t)$ in (\ref{T12.6}), multiplying (\ref{T12.7}) by $-\Delta d_n$ and integrating over $\Omega$, then summing the resulting identities up, one may get
\begin{align*}
&\frac{1}{2}\int_\Omega(|u_n(t)|^2+|\nabla d_n(t)|^2)dx\nonumber\\
&+\int_0^t\int_\Omega\left(\frac{\mu(\theta_n)}{2}|\nabla u_n+\nabla u_n^T|^2+\frac{1}{N}|\nabla u_n|^{\frac{20}{9}}+|\Delta d_n|^2\right)dxd\tau\nonumber\\
=&-\int_0^t\int_\Omega\Delta d_n\cdot\chi_M(|\nabla d_n|^2)d_ndxd\tau+\frac{1}{2}\int_\Omega(|u_{0,n}|^2+|\nabla d_{0,n}|^2)dx,
\end{align*}
and similarly, it follows from (\ref{T13.15}) and (\ref{T13.16}) that
\begin{align*}
&\frac{1}{2}\int_\Omega(|u(t)|^2+|\nabla d(t)|^2)dx\nonumber\\
&+\int_0^t\int_\Omega\left(\frac{\mu(\theta)}{2}|\nabla u+\nabla u^T|^2+\frac{1}{N}\overline{|\nabla u|^{\frac{2}{9}}\nabla u}:\nabla u+|\Delta d|^2\right)dxd\tau\nonumber\\
=&-\int_0^t\int_\Omega\Delta d\cdot\chi_M(|\nabla d|^2)ddxd\tau+\frac{1}{2}\int_\Omega(|u_{0}|^2+|\nabla d_{0}|^2)dx.
\end{align*}
By the aid of (\ref{T13.14-1}) and (\ref{T13.14-2}), and noticing that
\begin{eqnarray*}
\int_0^t\int_\Omega\Delta d_n\cdot\chi_M(|\nabla d_n|^2)d_ndxd\tau\rightarrow\int_0^t\int_\Omega\Delta d\cdot\chi_M(|\nabla d|^2)ddxd\tau,
\end{eqnarray*}
one can deduce from the previous two identities that
\begin{align*}
&\lim_{n\rightarrow\infty}\int_0^T\int_\Omega\left(\left|\sqrt{\frac{\mu(\theta_n)}{2}}(\nabla u_n+\nabla u_n^T)\right|^2+\frac{1}{N}|\nabla u|^{\frac{20}{9}}+|\Delta d_n|^2\right)dxdt\nonumber\\
=&\int_0^T\int_\Omega\left(\left|\sqrt{\frac{\mu(\theta)}{2}}(\nabla u+\nabla u^T)\right|^2+\frac{1}{N}\overline{|\nabla u|^{\frac{2}{9}}\nabla u}:\nabla u+|\Delta d|^2\right)dxdt,
\end{align*}
from which, by Lemma \ref{T1lem2.7}, one obtains
\begin{eqnarray*}
&&\nabla u_n\rightarrow\nabla u\mbox{ strongly in }L^{\frac{20}{9}}(Q_T),\\
&&\Delta d_n\rightarrow\Delta d\mbox{ strongly in }L^2(Q_T).
\end{eqnarray*}
Thanks to these strong convergence, it is clear that $\overline{|\nabla u|^{\frac{2}{9}}\nabla u}=|\nabla u|^{\frac{2}{9}}\nabla u,$
and thus (\ref{T13.15}) reduces to
$$
u_t+(u\cdot\nabla)u+\nabla p=\textmd{div}(S_N-\nabla d\odot\nabla d),
$$
and moreover, recalling (\ref{T13.12})--(\ref{T13.14}), one can take $n\rightarrow\infty$ in (\ref{T12.8}) to derive
$$
\theta_t+u\cdot\nabla \theta=\Delta \theta+S_N:\nabla u+|\Delta d+\chi_M(|\nabla d|^2)d.
$$

Now, we are in the position to prove (\ref{T13.0-6}). Now that
$$
\theta_n\rightarrow\theta,\quad\mbox{ strongly in }L^r(Q_T),~~\mbox{ for }r\in(1,2),
$$
there is a subsequence, still denoted by $\theta_n$, such that $
\theta_n\rightarrow\theta,\mbox{ a.e. in }Q_T.
$
Noticing that
$$
u_n\rightarrow u\mbox{ strongly in }L^{\frac{20}{9}}(0, T; W^{1,\frac{20}{9}}(\Omega)),\quad d_n\rightarrow d\mbox{ strongly in }L^2(0, T; H^2(\Omega)),
$$
we have
$$
Q_{M,N,n}(t)\rightarrow Q_{M,N}(t),\qquad\text{as }n\rightarrow\infty,
$$
for all $t\in[0, T]$. Hence, it follows from  (\ref{T13.4-2}) and (\ref{T13.8-1}), Fatou's lemma,  and the weakly lower semi-continuity of norms that
\begin{align*}
\int_\Omega\theta(t)dx\leq\liminf_{n\rightarrow\infty}\int_\Omega\theta_n(t)dx=\liminf_{n\rightarrow\infty}Q_{M,N,n}(t)=Q_{M,N}(t)
\end{align*}
and
\begin{align*}
&\int_0^t\int_\Omega|\nabla\theta|^qdxds\leq\liminf_{n\rightarrow\infty}\int_0^t\int_\Omega|\nabla\theta_n|^qdxds\\
\leq& C(q)\liminf_{n\rightarrow\infty}[Q_{M,N,n}(t)]^q=C(q)[Q_{M,N}(t)]^q
\end{align*}
for $t\in[0, T]$, which completes the proof.
\end{proof}

\section{The limit $M\rightarrow\infty$}\label{T1sec5}

In the previous section, we have shown that for any given $M>0$ and $N>0$ the system (\ref{T11.6})--(\ref{T11.8}) with (\ref{T11.4})--(\ref{T11.5})
has a weak solution $(u_{M,N}, d_{M,N}, \theta_{M,N})$. In this section, we study the limit as $M$ goes to infinity
of these solutions to prove the existence of weak solutions to the following system
\begin{eqnarray}
&&u_t+(u\cdot\nabla)u+\nabla p=\textmd{div}(S_N-\nabla d\odot\nabla d),\qquad\textmd{div}u=0,\label{T14.0-1}\\
&&d_t+(u\cdot\nabla)d=\Delta d+|\nabla d|^2d,\qquad|d|=1,\label{T14.0-3}\\
&&\theta_t+u\nabla\theta=\Delta\theta+S_N:\nabla u+|\Delta d+|\nabla d|^2d|^2,\label{T14.0-5}
\end{eqnarray}
with the initial and boundary conditions (\ref{T11.4}) and (\ref{T11.5}), where $S_N$ is given by (\ref{T11.9}). In other words, we will prove the following proposition.

\begin{proposition}\label{T1prop4.1}
Assume that the $2\pi$ periodic functions $u_0, d_0$ and $\theta_0$ satisfy
$$
u_0\in L^2_\sigma(\Omega),\quad d_0\in C^2(\overline\Omega),\quad |d_0|=1,\quad \theta_0\in L^1(\Omega),\quad\inf_{x\in\Omega}\theta_0\geq\underline\theta_0
$$
for some positive constant $\underline\theta_0$. Let $E_0\geq1$ be an arbitrary constant such that
$$
\int_\Omega(|u_0|^2+|\nabla d_0|^2+\theta_0)dx\leq E_0.
$$

Then there exist two positive constants $\varepsilon_0$ and $\tau_0\in(\tfrac{1}{4}, 1)$ depending only on $e_0:=\int_\Omega(|u_0|^2+|\nabla d_0|^2)dx$, such that if
$$
\sup_{x\in\overline\Omega}\int_{B_{2R_0}(x)}(|u_0|^2+|\nabla d_0|^2)dy\leq\varepsilon_0^2,\quad\mbox{ for some }R_0\in(0, 1],
$$
then, for any $N\geq1$, the system (\ref{T14.0-1})--(\ref{T14.0-5}) with (\ref{T11.4})--(\ref{T11.5}) has a weak solution
$(u, d, \theta,p)$ in $Q_{T_0}$ with $T_0=\tau_0R_0^3$, satisfying $\inf_{(x,t)\in Q_{T_0}}\theta\geq\underline\theta_0$,
\begin{eqnarray*}
&&\sup_{0\leq t\leq T_0}\int_\Omega(|u(t)|^2+|\nabla d(t)|^2)dx+\int_{Q_{T_0}}(|\nabla u|^2+\frac{1}{N}|\nabla u|^{\frac{20}{9}}\\
&&~~~~~~~~~~~~~~~+|\Delta d|^2+|d_t|^2+|p|^{\frac{20}{11}})dxdt\leq CE_0,\\
&&\sup_{0\leq t\leq T_0}\int_\Omega\theta(t)dx\leq C_0E_0, \quad\int_{Q_{T_0}}|\nabla\theta|^qdxdt\leq CE_0^q\quad q\in(1,
\tfrac{4}{3}),
\end{eqnarray*}
and the temperature $\theta$ satisfies the following entropy inequality
$$
\partial_t\theta^\alpha+\text{div}(u\theta^\alpha)\geq\Delta\theta^\alpha+\alpha\theta^{\alpha-1}(S_N:\nabla u+|\Delta d+|\nabla d|^2d|^2)+\alpha(1-\alpha)\theta^{\alpha-2}|\nabla\theta|^2
$$
in $\mathcal D'(Q_{T_0})$ for any $\alpha\in(0, 1)$.
\end{proposition}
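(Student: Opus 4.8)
The plan is to take the $M\to\infty$ limit in the weak solutions $(u_{M,N},d_{M,N},\theta_{M,N})$ produced by Proposition \ref{T1prop3.1}, the key point being to produce $M$-independent bounds on a common time interval $[0,T_0]$. First I would record the basic energy inequality \eqref{T12.15}: integrating it and using $\chi_M(|\nabla d|^2)^2\le|\nabla d|^2\chi_M(|\nabla d|^2)\le|\nabla d|^2|\Delta d|+\cdots$, one absorbs the right-hand side and obtains $\sup_t\int_\Omega(|u|^2+|\nabla d|^2)\,dx+\int_{Q_T}(|\nabla u|^2+\frac1N|\nabla u|^{20/9}+|\Delta d|^2+|d_t|^2)\,dxdt\le C e_0$ — \emph{but only on a time interval whose length is not yet controlled}. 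To get a uniform interval one invokes the local energy inequality in the spirit of Struwe \cite{Struwe}: testing the $d$-equation against $\phi^2\Delta d$ (with $\phi$ a spatial cutoff supported in $B_{2R}(x_0)$) and the $u$-equation against $\phi^2 u$, and combining via the identity $\operatorname{div}(\nabla d\odot\nabla d)=\nabla\frac{|\nabla d|^2}{2}+\Delta d\cdot\nabla d$, yields a differential inequality for $y(t)=\sup_{x_0}\int_{B_R(x_0)}(|u|^2+|\nabla d|^2)\phi^2\,dy$ of the form $\frac{d}{dt}y+\frac12\int\phi^2(|\nabla u|^2+|\Delta d|^2)\le C\big(\|\nabla u\|_{L^4}^2+\|\nabla d\|_{L^4}^2+\|u\|_{L^4}^2\big)y+CR^{-2}e_0$, where the $L^4$ norms are handled by Lemma \ref{T1lem2.1} (Ladyzhenskaya) — this is exactly what converts smallness of the local initial energy $\varepsilon_0^2$ into an a priori bound $\sup_{[0,T_0]}y(t)\le C\varepsilon_0^2$ for $T_0=\tau_0 R_0^3$ with $\tau_0,\varepsilon_0$ depending only on $e_0$. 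Feeding this back into Lemma \ref{T1lem2.1} controls $\|u\|_{L^4(Q_{T_0})}$ and $\|\nabla d\|_{L^4(Q_{T_0})}$ uniformly in $M$ and $N$, hence also $\|d_t\|_{L^2(Q_{T_0})}$ and, via \eqref{T12.16} and elliptic estimates, $\|p\|_{L^{20/11}(Q_{T_0})}$. The extra dissipation $\frac1N|\nabla u|^{2/9}\nabla u$ gives $\nabla u\in L^{20/9}(Q_{T_0})$ uniformly in $M$.

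Next I would derive the temperature bounds uniformly in $M$: integrating the $\theta$-equation gives $\int_\Omega\theta(t)\,dx=Q_{M,N}(t)$, and by the bounds just obtained $Q_{M,N}(t)\le\int_\Omega\theta_0\,dx+\int_{Q_{T_0}}(S_N:\nabla u+|\Delta d+\chi_M(|\nabla d|^2)d|^2)\,dxdt\le C E_0$ on $[0,T_0]$; then multiplying by $H'(\theta)$ with $H(\theta)=\theta^\alpha$, $\alpha\in(0,1)$, and arguing exactly as in the proof of Proposition \ref{T1prop3.1} (using the Sobolev and Hölder inequalities) gives $\int_{Q_{T_0}}\theta^r\,dxdt\le C[Q_{M,N}(t)]^r$ for $r\in(1,2)$ and $\int_{Q_{T_0}}|\nabla\theta|^q\,dxdt\le C[Q_{M,N}(t)]^q\le CE_0^q$ for $q\in(1,\frac43)$, all uniform in $M$. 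The same computation with $H(\theta)=\theta^\alpha$ also yields, before integration, the pointwise (distributional) differential inequality that is precisely the claimed entropy inequality with $S_N$ in place of $S$; since for the $M$-problem $|\nabla d|^2$ is replaced by $\chi_M(|\nabla d|^2)$, this inequality holds for each $M$ and, because it is a differential \emph{inequality} (lower bound), it survives passing to the limit provided we have the strong convergences below.

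Then I would extract weakly/strongly convergent subsequences. Uniform bounds give $u_M\rightharpoonup u$ in $L^{20/9}(0,T_0;W^{1,20/9})$, $d_M\rightharpoonup d$ in $L^2(0,T_0;H^2)$, $\partial_t d_M\rightharpoonup\partial_t d$ in $L^2$, $\theta_M\rightharpoonup\theta$ in $L^q(0,T_0;W^{1,q})$, $p_M\rightharpoonup p$ in $L^{20/11}$, plus $|\nabla u_M|^{2/9}\nabla u_M\rightharpoonup\overline{|\nabla u|^{2/9}\nabla u}$ weakly; time-translation estimates as in \eqref{T13.4-1}, \eqref{T13.8-2} together with Lemmas \ref{T1lem2.2}–\ref{T1lem2.3} upgrade these to strong convergence $u_M\to u$ in $L^q(Q_{T_0})$ for $q<40/9$, $d_M\to d$ in $L^q(0,T_0;W^{1,q})$ for $q<4$, and $\theta_M\to\theta$ in $L^r$ for $r<2$, hence a.e. The crucial point that $\chi_M(|\nabla d_M|^2)d_M\to|\nabla d|^2 d$ follows since $\nabla d_M\to\nabla d$ a.e. with $L^4$-equiintegrability (so $|\nabla d_M|^2\to|\nabla d|^2$ in $L^2$, hence a.e. and strongly in $L^{2-}$) and $\chi_M(s)\to s$ uniformly on bounded sets; this also passes $|d|=1$ (from $|d_M|\le1$ together with testing the $d$-equation against $d$ to recover $|d|=1$ in the limit, exactly the standard argument for the harmonic-map-flow constraint). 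The constraint $|d|=1$ then turns $\Delta d+\chi_M(|\nabla d_M|^2)d_M$ into $\Delta d+|\nabla d|^2d$. To identify $\overline{|\nabla u|^{2/9}\nabla u}=|\nabla u|^{2/9}\nabla u$ I would repeat the energy-identity-plus-monotonicity (Lemma \ref{T1lem2.7}) trick used in Proposition \ref{T1prop3.1}: compare the energy identity for $(u_M,d_M)$ (test the $M$-momentum equation by $u_M$ and the $d$-equation by $-\Delta d_M$) with the one for the limit, using the a.e. convergences to pass the $\chi_M(|\nabla d_M|^2)d_M\cdot\Delta d_M$ term, concluding $\nabla u_M\to\nabla u$ strongly in $L^{20/9}$ and $\Delta d_M\to\Delta d$ strongly in $L^2$.

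The main obstacle I expect is the local energy inequality argument that fixes the uniform existence time $T_0=\tau_0 R_0^3$: one must be careful that all constants in the Gronwall-type estimate for the localized energy depend only on $e_0$ (and the domain) and not on $M$ or $N$ — in particular the cutoff terms $\int\phi^2\chi_M(|\nabla d|^2)d\cdot\Delta d$ and $\int|\nabla\phi|^2(|u|^2+|\nabla d|^2)$ must be absorbed using Young's inequality against the good term $\frac12\int\phi^2(|\nabla u|^2+|\Delta d|^2)$ together with Lemma \ref{T1lem2.1}, and the bootstrap from "$y(t)$ small" back to "$L^4$ norms small" must close on the scaled interval $[0,\tau_0 R_0^3]$; the parabolic scaling $R^3$ (rather than $R^2$) appears because the relevant quantity for the coupled system behaves like $\int_0^{R^3}\!\!\int|\nabla u|^2$, controlled via the $L^4$ bound. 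A secondary technical point is handling the $\frac1N|\nabla u|^{2/9}\nabla u$ term in the localized inequality — it only helps (it is a nonnegative dissipation) and can simply be dropped when proving smallness — but it must be retained when one later claims $\nabla u\in L^{20/9}$, which is needed (in the next section) for the Hölder estimate on $\nabla d$. All remaining steps are routine adaptations of the $n\to\infty$ analysis already carried out.
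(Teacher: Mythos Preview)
Your overall strategy matches the paper's, but there is a genuine gap at the crucial step where you compare the energy identities to invoke Lemma~\ref{T1lem2.7}. To conclude $\nabla u_M\to\nabla u$ in $L^{20/9}$ and $\Delta d_M\to\Delta d$ in $L^2$, you must pass to the limit in
\[
\int_{Q_t}\chi_M(|\nabla d_M|^2)\,d_M\cdot\Delta d_M\,dx\,ds \;\longrightarrow\; \int_{Q_t}|\nabla d|^2\,d\cdot\Delta d\,dx\,ds.
\]
From Aubin--Lions you only get $\nabla d_M\to\nabla d$ strongly in $L^q(Q_{T_0})$ for $q<4$, hence $\chi_M(|\nabla d_M|^2)d_M\to|\nabla d|^2d$ a.e.\ and bounded in $L^2$, which yields merely \emph{weak} $L^2$ convergence. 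Since $\Delta d_M$ also only converges weakly in $L^2$, the product does not pass. Your claimed ``$L^4$-equiintegrability'' of $\nabla d_M$ is exactly what is missing: the available bounds $\nabla d_M\in L^\infty(L^2)\cap L^2(H^1)$ give at best $L^4(Q_{T_0})$, not $L^{4+\epsilon}$, so $|\nabla d_M|^4$ need not be equi-integrable. Note that in the $n\to\infty$ step this issue did not arise because there $M$ was fixed and $\chi_M$ bounded.

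The paper closes this gap with an additional ingredient you have postponed to ``the next section'': a Campanato-type argument (Lemma~\ref{T1lem4.3}) giving $\|\nabla d_M\|_{C^{\beta,\beta/2}(\overline\Omega\times[0,T_0/2])}\le C(N,e_0,\|d_0\|_{C^2})$, uniform in $M$. This is precisely where the hypothesis $d_0\in C^2(\overline\Omega)$ is used, and where the extra dissipation $\tfrac1N|\nabla u|^{2/9}\nabla u$ is essential: it provides $u_M\in L^{40/9}(Q_{T_0})$ (a bound genuinely better than $L^4$), which furnishes the decay $\int_{Q_R(z_1)}|u_M|^4\le CR^{2/5}$ needed to iterate the Campanato inequality. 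The resulting uniform convergence $\nabla d_M\to\nabla d$ in $C(\overline{Q_{T_0}})$ makes $\chi_M(|\nabla d_M|^2)=|\nabla d_M|^2$ for $M$ large and delivers the strong convergence required for the energy-identity comparison. So the H\"older estimate is not a subsequent refinement but a load-bearing step \emph{inside} the $M\to\infty$ limit.
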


Several lemmas are needed to prove Proposition \ref{T1prop4.1}.

\begin{lemma}\label{T1lem4.1}
Let $(u, d, \theta, p)$ be the solution obtained in Proposition \ref{T1prop3.1}. Extend $(u, d,\theta,p)$ $2\pi$-periodically so that it is defined on $\mathbb R^2$. Then, it holds that
\begin{align*}
&\frac{d}{dt}\int_{\mathbb R^2}(|u|^2+|\nabla d|^2)\phi dx\\
&+\int_{\mathbb R^2}\left(\mu(\theta)|\nabla u+\nabla u^T|^2+\frac{1}{N}|\nabla u|^{\frac{20}{9}}+\frac{1}{2}|\Delta d|^2\right)\phi dx\\
\leq&C\int_{\mathbb R^2}[(|u|^3+|u||\nabla d|^2+|u||p|+|u||\nabla u|+\frac{1}{N}|u||\nabla u|^{\frac{11}{9}}\\
&+|\nabla d||d_t|)|\nabla\phi|+|\nabla d|^4\phi]dx,
\end{align*}
for any $\phi\in C_0^\infty({\mathbb R^2})$.
\end{lemma}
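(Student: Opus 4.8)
The plan is to derive a localized version of the basic energy identity \eqref{T12.15} by testing the momentum equation against $u\phi$ and the director equation against $-\Delta d\,\phi$ (equivalently, testing $d_t$-equation suitably), where $\phi\in C_0^\infty(\mathbb R^2)$ is a fixed cutoff. Since the solution of Proposition \ref{T1prop3.1} enjoys $u\in L^{20/9}(0,T;W^{1,20/9})$, $d\in L^2(0,T;H^2)$, $d_t\in L^2(Q_T)$, $p\in L^{20/11}(Q_T)$ and the pointwise bounds $|d|\le1$, $\theta\ge\underline\theta_0$, all the manipulations below are justified (after a routine mollification/approximation argument in time which I will only mention).

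First I would multiply \eqref{T13.16} by $-\Delta d\,\phi$ and integrate over $\mathbb R^2$: the left side gives $\frac{1}{2}\frac{d}{dt}\int|\nabla d|^2\phi\,dx$ plus $\int|\Delta d|^2\phi\,dx$ plus commutator terms of the form $\int\nabla d\cdot\nabla\phi\,\partial_t d\,dx$ (from integrating $\partial_t d\cdot(-\Delta d)\phi$ by parts) and $\int (u\cdot\nabla)d\cdot\Delta d\,\phi\,dx$; the right side produces $-\int\chi_M(|\nabla d|^2)d\cdot\Delta d\,\phi\,dx$, which after integration by parts and using $|d|\le1$, $|\chi_M(s)|\le s$, is bounded by $C\int(|\nabla d|^4\phi+|\nabla d|^3|\nabla\phi|)dx$; absorbing $\frac12\int|\Delta d|^2\phi$ and using Young's inequality on the mixed terms yields the $\frac12|\Delta d|^2\phi$, the $|\nabla d|^4\phi$, and the $|u||\nabla d|^2|\nabla\phi|$ and $|\nabla d||d_t||\nabla\phi|$ contributions (the convective term $\int(u\cdot\nabla)d\cdot\Delta d\,\phi$ is handled by $|\Delta d|^2$-absorption leaving $|u|^2|\nabla d|^2\phi\le |u||\nabla d|^2\cdot(|u|\phi)$—here I would instead keep it grouped with $|\nabla d|^4\phi+|u|^4\phi$, and since $|u|^4$ is not in the claimed right side I need to be more careful: integrate $(u\cdot\nabla)d\cdot\Delta d\,\phi$ by parts to move the derivative off $\Delta d$, producing $\int(u\cdot\nabla)d\cdot\nabla d\cdot\nabla\phi + \int\partial_j u_k\partial_k d\cdot\partial_j d\,\phi + \int u_k\partial_k(\tfrac12|\nabla d|^2)\phi$, i.e. terms $|u||\nabla d|^2|\nabla\phi|$, $|\nabla u||\nabla d|^2\phi$, and—after one more integration by parts on the last—$|u||\nabla d|^2|\nabla\phi|$; the term $|\nabla u||\nabla d|^2\phi\le\varepsilon|\nabla u|^2\phi+C|\nabla d|^4\phi$ is then split using $|\nabla u|^2\le C\mu(\theta)|\nabla u+\nabla u^T|^2$ via Korn/the pointwise lower bound on $\mu$, and absorbed).

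Next I would test \eqref{T13.15} (with $\overline{|\nabla u|^{2/9}\nabla u}$ already identified as $|\nabla u|^{2/9}\nabla u$) against $u\phi$: the time term gives $\frac12\frac{d}{dt}\int|u|^2\phi\,dx$; the convective term $\int(u\cdot\nabla)u\cdot u\,\phi = -\frac12\int|u|^2 u\cdot\nabla\phi\,dx$ gives $|u|^3|\nabla\phi|$; the pressure term $\int\nabla p\cdot u\,\phi = -\int p\,(u\cdot\nabla\phi)\,dx$ (using $\operatorname{div}u=0$) gives $|u||p||\nabla\phi|$; the viscous term $-\int\operatorname{div}(\mu(\theta)(\nabla u+\nabla u^T))\cdot u\,\phi = \int\mu(\theta)(\nabla u+\nabla u^T):\nabla u\,\phi + \int\mu(\theta)(\nabla u+\nabla u^T):(u\otimes\nabla\phi)\,dx$, where the first integral equals $\frac12\int\mu(\theta)|\nabla u+\nabla u^T|^2\phi$ and the second is bounded by $C\int|u||\nabla u||\nabla\phi|dx$; the $N^{-1}$-term similarly yields $\frac1N\int|\nabla u|^{20/9}\phi$ plus $\frac CN\int|u||\nabla u|^{11/9}|\nabla\phi|dx$; and the elastic stress term $\int\operatorname{div}(\nabla d\odot\nabla d)\cdot u\,\phi$ is, using the identity $\operatorname{div}(\nabla d\odot\nabla d)=\nabla(\tfrac12|\nabla d|^2)+\Delta d\cdot\nabla d$ recalled in the excerpt, equal to $-\int\tfrac12|\nabla d|^2(u\cdot\nabla\phi)dx+\int(\Delta d\cdot\nabla d)\cdot u\,\phi\,dx$, contributing $|u||\nabla d|^2|\nabla\phi|$ and a term $|u||\nabla d||\Delta d|\phi\le\varepsilon|\Delta d|^2\phi+C|u|^2|\nabla d|^2\phi$; this last is again not directly admissible, so I would instead pair the $\int(\Delta d\cdot\nabla d)\cdot u\,\phi$ from the momentum equation with the $\int(u\cdot\nabla)d\cdot\Delta d\,\phi$ commutator from the director equation—these are the classical cancellation that removes $\nabla d\odot\nabla d$ in the global energy law—leaving only boundary-type terms with $\nabla\phi$ of the form $|u||\nabla d|^2|\nabla\phi|$ and $|\nabla u||\nabla d|^2\phi$ (treated as above) plus, crucially, \emph{no} uncontrolled interior term.

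The main obstacle, then, is the careful bookkeeping of the cross terms between the momentum and director equations: in the global (unweighted) energy estimate the $\Delta d\cdot\nabla d$ elastic force exactly cancels against the transport term $(u\cdot\nabla)d$ in the $d$-equation, but in the localized estimate this cancellation is only exact up to terms carrying a factor $\nabla\phi$, and one must check that every leftover interior term (those with $\phi$ rather than $\nabla\phi$) is either of the form $|\nabla d|^4\phi$, or absorbable into the good terms $\mu(\theta)|\nabla u+\nabla u^T|^2\phi+\frac1N|\nabla u|^{20/9}\phi+\frac12|\Delta d|^2\phi$ via Young's inequality together with the lower bound $\mu(\theta)\ge\underline\mu$ and Korn's inequality (Lemma \ref{T1lem2.4}) to pass from $|\nabla u+\nabla u^T|^2$ to $|\nabla u|^2$. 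Once all leftover terms are so classified, summing the two tested equations and collecting gives exactly the asserted inequality. The time-regularity needed to write $\frac{d}{dt}$ is obtained by first proving the identity in integrated-in-time form against test functions, then differentiating; I would remark on this rather than detail it.
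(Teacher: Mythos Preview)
Your overall strategy---test the momentum equation by $u\phi$ and the director equation by $-\Delta d\,\phi$, then sum---is exactly the paper's. But you overcomplicate the cancellation and in doing so introduce a term that would not be absorbable.

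The key point you are missing is that the cancellation between $\int(\Delta d\cdot\nabla d)\cdot u\,\phi\,dx$ (from the elastic stress in the momentum equation) and $\int(u\cdot\nabla)d\cdot\Delta d\,\phi\,dx$ (from the director equation) is \emph{exact}, not merely ``up to $\nabla\phi$-terms'': indeed $(\Delta d\cdot\nabla d)_j=\Delta d_k\,\partial_j d_k$, so $(\Delta d\cdot\nabla d)\cdot u=u_j\,\partial_j d_k\,\Delta d_k=(u\cdot\nabla)d\cdot\Delta d$ pointwise. Hence no residual $|\nabla u|\,|\nabla d|^2\,\phi$ term arises, and your proposed absorption of such a term via Korn's inequality is both unnecessary and unavailable---Lemma~\ref{T1lem2.4} is a global inequality on $\Omega$ and does not give $\int|\nabla u|^2\phi\,dx\le C\int|\nabla u+\nabla u^T|^2\phi\,dx$ for an arbitrary cutoff $\phi$.

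Likewise, do not integrate by parts on the $\chi_M$ term: simply bound $|\chi_M(|\nabla d|^2)\,d\cdot\Delta d\,\phi|\le |\nabla d|^2|\Delta d|\,\phi$ (using $|d|\le1$, $\chi_M(s)\le s$) and apply Young's inequality to absorb $\tfrac32|\Delta d|^2\phi$, leaving only $C|\nabla d|^4\phi$. Your integration-by-parts route produces an extra $|\nabla d|^3|\nabla\phi|$ which is not in the stated right-hand side. With these two simplifications your argument collapses to the paper's.
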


\begin{proof}
Since $(u, d, \theta, p)$ is $2\pi$ periodic with respect to space variables, one can easily check that $(u, d,\theta, p)$ is a weak solution in the whole space to the same system. Testing (\ref{T11.6}) by $u\phi$ leads to
\begin{align*}
\frac{d}{dt}\int_{\mathbb R^2}\frac{|u|^2}{2}\phi dx
=&\int_{\mathbb R^2}\left(p+\frac{|u|^2}{2}\right)(u\nabla\phi)dx-\int_{\mathbb R^2}\Big[S_N:(\nabla u\phi+u\otimes\nabla\phi)\\
&\left.+\left(\nabla\left(\frac{|\nabla d|^2}{2}\right)+\Delta d\cdot\nabla d\right)u\phi\right]dx\\
=&\int_{\mathbb R^2}\left(p+\frac{|u|^2}{2}+\frac{|\nabla d|^2}{2}\right)u\nabla\phi dx-\int_{\mathbb R^2}S_N:(\nabla u\phi+u\otimes\nabla\phi)dx\\
&-\int_{\mathbb R^2}\Delta d\cdot\nabla d\cdot u\phi dx,
\end{align*}
and thus
\begin{align*}
&\frac{d}{dt}\int_{\mathbb R^2}|u|^2\phi dx+\int_{\mathbb R^2}(\mu(\theta)|\nabla u+\nabla u^T|^2+\frac{1}{N}|\nabla u|^{\frac{20}{9}})\phi dx\nonumber\\
\leq&C\int_{\mathbb R^2}(|u|^3+|u||\nabla d|^2+|u||p|+|u||\nabla u|+\frac{1}{N}|u||\nabla u|^{\frac{11}{9}})|\nabla\phi|dx\nonumber\\
&-2\int_{\mathbb R^2}\Delta d\cdot\nabla d\cdot u\phi dx.
\end{align*}
Multiplying (\ref{T11.7}) by $-\Delta d\phi$ and integrating over $\mathbb R^2$ yields
\begin{align*}
&\frac{d}{dt}\int_{\mathbb R^2}\frac{|\nabla d|^2}{2}\phi dx+\int_{\mathbb R^2}|\Delta d|^2\phi dx\nonumber\\
=&\int_{\mathbb R^2}[\chi_M(|\nabla d|^2)d(-\Delta d)\phi-d_t\nabla d\nabla \phi]dx+\int_{\mathbb R^2}\Delta d\cdot\nabla d\cdot u\phi dx\nonumber\\
\leq& C\int_{\mathbb R^2}(|\Delta d||\nabla d|^2+|d_t||\nabla d||\nabla\phi|)dx+\int_{\mathbb R^2}\Delta d\cdot\nabla d\cdot u\phi dx.
\end{align*}
The conclusion follows from summing up these two inequalities and using Young's inequality, which completes the proof.
\end{proof}

\begin{lemma}\label{T1lem4.2}
Let $(u, d, \theta, p)$ be the weak solution obtained in Proposition \ref{T1prop3.1} with $N\geq1$. Let $e_0\geq1$ be an arbitrary constant such that
$$
\int_\Omega(|u_0|^2+|\nabla d_0|^2)dx\leq e_0.
$$

Then there is a positive constant $\varepsilon_1$ depending only on $e_0$, such that if $\varepsilon\leq\varepsilon_1$ and
$$
\sup_{x\in\Omega}\int_{B_{2R}(x)}(|u_0|^2+|\nabla d_0|^2)dy\leq\varepsilon^2,\quad\mbox{ for some }R\in(0, 1],
$$
then it holds that
\begin{eqnarray*}
&\int_{Q_{T_1}}\left(\frac{|\nabla u|^{\frac{20}{9}}}{N}+|\nabla u|^2+|\Delta d|^2+|d_t|^2+|p|^{\frac{20}{11}}\right)dxdt\nonumber\\
&\hspace{-13mm}+\sup_{0\leq t\leq T_1}\int_\Omega(|u|^2+|\nabla d|^2)dx\leq Ce_0,\nonumber\\
&\sup_{0\leq t\leq T_1}\int_\Omega\theta dx\leq\int_\Omega\theta_0dx+Ce_0,\quad \int_{Q_{T_1}}|\nabla\theta|^qdxdt\leq C\left[\left(\int_\Omega\theta_0dx\right)^q+e_0^q\right]
\end{eqnarray*}
for $q\in(1, \tfrac{4}{3}),$ where $T_1=\tau_0R^3$ with $\tau_0=\tau_0(\varepsilon,e_0)=\left(\frac{\varepsilon^4}{e_0}\right)^5$.
\end{lemma}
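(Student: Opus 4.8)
The plan is to prove all the estimates simultaneously by a continuity (bootstrap) argument on the local energy, the two main tools being the local energy inequality of Lemma~\ref{T1lem4.1} and the Ladyzhenskaya-type inequality of Lemma~\ref{T1lem2.1}, together with Korn's inequality (Lemma~\ref{T1lem2.4}). For $x_0\in\mathbb R^2$ and the given $R\in(0,1]$ let $\phi=\phi_{x_0}\in C_0^\infty(B_{2R}(x_0))$ with $\phi\equiv1$ on $B_R(x_0)$, $0\le\phi\le1$, $|\nabla\phi|\le C/R$; in the absorption steps one uses in the usual way a finite nested family of such cutoffs on geometrically spaced balls between radii $R$ and $2R$. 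Set
$$
\beta(t)=\sup_{0\le s\le t}\,\sup_{x_0}\int_{B_{2R}(x_0)}(|u(s)|^2+|\nabla d(s)|^2)\,dy,
$$
$$
\Phi(t)=\sup_{0\le s\le t}\int_\Omega(|u(s)|^2+|\nabla d(s)|^2)\,dx+\int_{Q_t}\Big(|\nabla u|^2+\tfrac1N|\nabla u|^{\frac{20}{9}}+|\Delta d|^2\Big)dx\,d\tau .
$$
Both are continuous in $t$, since $\nabla d\in C([0,T];L^2)$ (from $d\in L^2(H^2)$, $d_t\in L^2(Q_T)$) and $u\in C([0,T];L^2)$ (from weak continuity and the energy identity derived in the proof of Proposition~\ref{T1prop3.1}), and $\beta(0)\le\varepsilon^2$, $\Phi(0)\le e_0$. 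The bootstrap hypothesis is that on $[0,t]$ one has $\beta\le4\varepsilon^2$ and $\Phi\le2\bar Ce_0$, and the aim is to improve these to $\beta\le2\varepsilon^2$ and $\Phi\le\bar Ce_0$, provided $\varepsilon\le\varepsilon_1(e_0)$ and $t\le T_1=\tau_0R^3$ with $\tau_0=(\varepsilon^4/e_0)^5$. Throughout, all constants are independent of $M$ and $N$, since $\chi_M(s)\le s$ dominates the $\chi_M$-terms uniformly in $M$.

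Integrating the inequality of Lemma~\ref{T1lem4.1} over $(0,t)$ with $\phi=\phi_{x_0}$, and using Lemma~\ref{T1lem2.4} on $\Omega=\mathbb T^2$ after summing to replace $\int|\nabla u+\nabla u^T|^2$ by $\int|\nabla u|^2$ (the correction $C\int|u|^2\le Cte_0$ being harmless), one gets
$$
\int_{B_R(x_0)}(|u(t)|^2+|\nabla d(t)|^2)\,dy+\int_0^t\!\!\int_{B_R(x_0)}\Big(\underline\mu|\nabla u|^2+\tfrac1N|\nabla u|^{\frac{20}{9}}+\tfrac12|\Delta d|^2\Big)\le\varepsilon^2+C\,I_{x_0}(t)
$$
for each $x_0$, and, after summing over a bounded-overlap cover of $\Omega$ by balls of radius $R$,
$$
\Phi(t)\le \bar Ce_0+C\sum_j I_{x_j}(t).
$$
Now each term of $I_{x_0}(t)$ is estimated. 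The quartic term $\int_{Q_t}|\nabla d|^4\phi$ is, by Lemma~\ref{T1lem2.1} (applied on the relevant ball, with the $1/R^2$-correction making the constant scale-invariant) together with interior elliptic estimates, at most $C\beta(t)\big(\int_0^t\!\int_{B_{4R}(x_0)}|\Delta d|^2+R^{-2}\int_0^t\!\int_{B_{4R}(x_0)}|\nabla d|^2\big)$; the first piece is absorbed into $\tfrac12\int|\Delta d|^2$ on the next ball of the nested family once $C\beta(t)<\tfrac14$, the second is $\le C\beta(t)^2R^{-2}t\le C\varepsilon^4\tau_0$. For the terms carrying $|\nabla\phi|\le C/R$: the cubic and mixed ones ($|u|^3$, $|u||\nabla d|^2$, and, after the substitution below, $|\nabla d|^3$) are handled by H\"older in space, $|u|^3\le|u|^2\cdot|u|$ and so on, followed by Lemma~\ref{T1lem2.1} to bound $\int_{Q_t\cap B}|u|^4$ and $\int_{Q_t\cap B}|\nabla d|^4$ by $C\beta(t)$ times the localized dissipation; the linear ones $|u||\nabla u|$ and $\tfrac1N|u||\nabla u|^{\frac{11}{9}}$ are treated by Young's inequality, absorbing a small multiple of $|\nabla u|^2$ (or of $\tfrac1N|\nabla u|^{\frac{20}{9}}$) and leaving $R^{-2}\int|u|^2$ and $R^{-\frac{20}{9}}\int|u|^{\frac{20}{9}}$; the $d_t$-term $|\nabla d||d_t|$ is rewritten via $d_t=\Delta d-u\cdot\nabla d+\chi_M(|\nabla d|^2)d$, producing $|\Delta d||\nabla d|$, $|u||\nabla d|^2$, $|\nabla d|^3$, treated as above. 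In every case the outcome is a term absorbed into the left-hand side (with small coefficient $C\beta(t)$ or $C\beta(t)^{1/2}$, made small by choosing $\varepsilon_1$ small) or a residual bounded by $C\,\tau_0^{a}R^{b}$ with $a>0$, $b\ge0$, after using $t\le\tau_0R^3$ and $R\le1$; with $\tau_0=(\varepsilon^4/e_0)^5$ every such residual is $\ll\varepsilon^2$ and $\ll e_0$.

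The one genuinely delicate term is the pressure term $\int_{Q_t}|u||p||\nabla\phi|$, because $p$ is nonlocal. Following Struwe \cite{Struwe} and Lin--Lin--Wang \cite{Lin4}, decompose $p=p_1+h$ on $B_{2R}(x_0)$: $p_1$ solves the Poisson equation with the localized source $\text{div}\,\text{div}\big[(S_N-\nabla d\odot\nabla d-u\otimes u)\mathbf 1_{B_{2R}}\big]$, so Calder\'on--Zygmund estimates give $\|p_1\|_{L^{20/11}(B_{2R})}\le C(\|S_N\|_{L^{20/11}(B_{2R})}+\|u\|_{L^{40/11}(B_{2R})}^2+\|\nabla d\|_{L^{40/11}(B_{2R})}^2)$, while $h$ is harmonic in $B_{2R}$, so the mean value property bounds $\|h\|_{L^\infty(B_{3R/2})}$ by a negative power of $R$ times $\|p\|_{L^{20/11}(B_{2R})}+\|p_1\|_{L^{20/11}(B_{2R})}$. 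The global quantity $\|p\|_{L^{20/11}(Q_t)}$ entering here is re-estimated, uniformly in $M$ and $N$, from the elliptic equation (\ref{T12.16}) exactly as in the proof of Proposition~\ref{T1prop3.1}: using $N\ge1$ one has $\tfrac1N\|\nabla u\|_{L^{20/9}(Q_t)}^{11/9}\le\Phi(t)^{11/20}$, while $\|u\|_{L^{40/11}}$ and $\|\nabla d\|_{L^{40/11}}$ are controlled via Lemma~\ref{T1lem2.1} by $\beta(t)$ and $\Phi(t)$, so under the bootstrap hypothesis $\|p\|_{L^{20/11}(Q_t)}\le C(e_0)$; feeding this back together with the power of $R$ gained from $t\le\tau_0R^3$, the pressure contribution is again of the negligible, $R$- and $\tau_0$-power type. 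Collecting the estimates, the two displayed inequalities yield $\beta(t)\le\varepsilon^2+\tfrac12\varepsilon^2\le2\varepsilon^2$ and $\Phi(t)\le\bar Ce_0+\tfrac12\Phi(t)+\tfrac12\bar Ce_0\le\bar Ce_0$, so the bootstrap closes on $[0,T_1]$ by continuity; in particular $\sup_{[0,T_1]}\int_\Omega(|u|^2+|\nabla d|^2)\,dx+\int_{Q_{T_1}}(\tfrac1N|\nabla u|^{20/9}+|\nabla u|^2+|\Delta d|^2+|p|^{20/11})\le Ce_0$, the $|p|^{20/11}$ bound being the one just obtained.

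The rest is routine. The bound $\int_{Q_{T_1}}|d_t|^2\le Ce_0$ follows from $d_t=\Delta d-u\cdot\nabla d+\chi_M(|\nabla d|^2)d$ and Lemma~\ref{T1lem2.1} applied to $u$ and $\nabla d$ (to control $\int|u|^2|\nabla d|^2+\int|\nabla d|^4\le Ce_0$). For the temperature, note that
$$
Q_{M,N}(t)=\int_\Omega\theta_0\,dx+\int_{Q_t}\Big(\tfrac{\mu(\theta)}{2}|\nabla u+\nabla u^T|^2+\tfrac1N|\nabla u|^{\frac{20}{9}}+|\Delta d+\chi_M(|\nabla d|^2)d|^2\Big)dx\,d\tau\le\int_\Omega\theta_0\,dx+Ce_0
$$
by the bounds just established (using $S_N:\nabla u=\tfrac{\mu(\theta)}{2}|\nabla u+\nabla u^T|^2+\tfrac1N|\nabla u|^{20/9}$ and $|\Delta d+\chi_M(|\nabla d|^2)d|^2\le C(|\Delta d|^2+|\nabla d|^4)$); plugging this into (\ref{T13.0-6}) of Proposition~\ref{T1prop3.1} gives $\sup_{[0,T_1]}\int_\Omega\theta\,dx\le\int_\Omega\theta_0\,dx+Ce_0$ and $\int_{Q_{T_1}}|\nabla\theta|^q\le C(q)[Q_{M,N}(T_1)]^q\le C(q)\big[(\int_\Omega\theta_0\,dx)^q+e_0^q\big]$ for $q\in(1,\tfrac43)$. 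The main obstacle is the closure of the coupled bootstrap for the two quantities $\beta$ (of size $\varepsilon^2$) and $\Phi$ (of size $e_0$) — the estimates for $\beta$ needing $\Phi$ already bounded and those for $\Phi$ needing $\beta$ already small — together with the incorporation of the nonlocal pressure into this loop; the role of the specific power in $\tau_0=(\varepsilon^4/e_0)^5$ and of $\varepsilon_1(e_0)$ is precisely to make every residual term negligible within it.
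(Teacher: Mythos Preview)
Your approach is broadly correct but takes a different and more intricate route than the paper. The paper decouples the argument: it first defines $t_0$ as the maximal time for which the local energy stays $\le 2\varepsilon^2$, and then derives the global bound $\Phi(t)\le Ce_0$ for $t\le\min\{t_0,\varepsilon^2R^2\}$ \emph{directly} from the global energy identity (testing (\ref{T11.6}) by $u$ and (\ref{T11.7}) by $-\Delta d$, with no cutoff and no summation over a cover), using Lemma~\ref{T1lem2.1} only to bound the single quartic term $\int|\nabla d|^4$; the pressure and $d_t$ bounds follow immediately. Only afterwards does it invoke the local inequality of Lemma~\ref{T1lem4.1}, and there nothing is absorbed and the pressure is not decomposed: every right-hand term is estimated by a plain H\"older inequality against the global quantities just obtained, the key being that the exponents $\tfrac14+\tfrac{11}{20}=\tfrac45$ in $\|u\|_{L^4}\|p\|_{L^{20/11}}$ (and likewise for $N^{-1}|u||\nabla u|^{11/9}$) leave room for a measure factor $(R^2t_0)^{1/5}$, yielding the clean $(t_0/R^3)^{1/5}e_0$ in (\ref{T14.7}). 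The extra dissipation $\tfrac1N|\nabla u|^{20/9}$ in $S_N$ was designed precisely so that this H\"older argument closes without any Struwe-type harmonic splitting of $p$.

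Your coupled bootstrap with nested cutoffs, Young-absorption of gradient terms, and the $p=p_1+h$ decomposition is the Lin--Lin--Wang/Struwe machinery; it should close, but one point deserves care: locally only $\mu(\theta)|\nabla u+\nabla u^T|^2\phi$, not $|\nabla u|^2\phi$, sits on the left of Lemma~\ref{T1lem4.1}, so the absorption you propose for $|u||\nabla u|$ cannot use Korn's (global) inequality and instead needs the divergence-free identity $\int|\nabla u+\nabla u^T|^2\phi=2\int|\nabla u|^2\phi-2\int u_j\partial_j u_i\,\partial_i\phi$, which throws another $|u||\nabla u||\nabla\phi|$ term back to the right. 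The paper's route sidesteps both this technicality and the pressure decomposition, at the cost of tracking H\"older exponents more carefully.
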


\begin{proof}
Let $\varepsilon\leq\varepsilon_1$, where $\varepsilon_1\leq 1
$ is a positive constant to be determined later. Extend $(u, d, \theta, p)$ periodically such that it is defined in the whole space. Testing (\ref{T11.6}) by $u$, multiplying (\ref{T11.7}) by $-\Delta d$ and integrating over $\Omega$, then summing the resulting identities up, one gets
\begin{align*}
\int_\Omega(|u(t)|^2&+|\nabla d(t)|^2)dx+\int_0^t\int_\Omega(\mu(\theta)|\nabla u+\nabla u^T|^2+\frac{2}{N}|\nabla u|^{\frac{20}{9}}+2|\Delta d|^2)dxds\nonumber\\
=&-2\int_0^t\int_\Omega\chi_M(|\nabla d|^2)d\cdot\Delta ddxds+\int_\Omega(|u_0|^2+|\nabla d_0|^2)dx\\
\leq&\int_0^t\int_\Omega(|\Delta d|^2+|\nabla d|^4)dxds+\int_\Omega(|u_0|^2+|\nabla d_0|^2)dx,
\end{align*}
from which, by Lemma \ref{T1lem2.4} (Korn's inequality), it follows that
\begin{align}
\sup_{0\leq s\leq t}\int_{\Omega}&(|u(t)|^2+|\nabla d(t)|^2)dx+\int_0^t\int_\Omega(|\nabla u|^2+\frac{1}{N}|\nabla u|^{\frac{20}{9}}+|\Delta d|^2)dxds\nonumber\\
\leq&C\int_{\Omega}(|u_0|^2+|\nabla d_0|^2)dx+C\int_0^t\int_{\Omega}(|u|^2+|\nabla d|^4)dxds,\label{T14.4}
\end{align}
for any $t\in[0, T]$.

Noticing that $u\in C([0, T]; L^2(\Omega))\mbox{ and }d\in C([0,T]; H^1(\Omega)),$
one can define $t_0$ as
\begin{equation*}
t_0=\sup\left\{t\geq0\left|\sup_{(x,s)\in Q_t}\int_{B_R(x)}(|u(t)|^2+|\nabla d(t)|^2)dy\leq 2\varepsilon^2\right.\right\},
\end{equation*}
then it is clear that
\begin{equation}\label{T14.6}
\sup_{(x,t)\in Q_{t_0}}\int_{B_R(x)}(|u|^2+|\nabla d|^2)dy=2\varepsilon^2.
\end{equation}

By Lemma \ref{T1lem2.1}, it follows from (\ref{T14.6}) that
\begin{eqnarray}
&\int_{Q_{t}}|u|^4dxds\leq C\varepsilon^2\left(\int_{Q_{t}}|\nabla u|^2dxds+\frac{t}{R^2}\sup_{0\leq t\leq t}\int_\Omega|u|^2dx\right),\label{T14.8}\\
&\int_{Q_{t}}|\nabla d|^4dxds\leq C\varepsilon^2\left(\int_{Q_{t}}|\Delta d|^2dxds+\frac{t}{R^2}\sup_{0\leq s\leq t}\int_\Omega|\nabla d|^2dx\right),\label{T14.9}
\end{eqnarray}
for any $t\in[0, t_0]$.
Substituting (\ref{T14.9}) into (\ref{T14.4}) yields
\begin{align*}
&\sup_{0\leq s\leq t}\int_{\Omega}(|u(t)|^2+|\nabla d(t)|^2)dx+\int_0^t\int_\Omega(|\nabla u|^2+\frac{1}{N}|\nabla u|^{\frac{20}{9}}+|\Delta d|^2)dxds\nonumber\\
\leq&C\varepsilon^2\int_{Q_t}|\Delta d|^2dxds+C\left(\frac{\varepsilon^2}{R^2}+1\right)t\sup_{0\leq t\leq t}\int_\Omega(|u|^2+|\nabla d|^2)dx\\
&+ C\int_\Omega(|u_0|^2+|\nabla d_0|^2)dx,
\end{align*}
and thus
\begin{align}
\sup_{0\leq s\leq t}\int_\Omega(|u|^2+|\nabla d|^2dx+&\int_{Q_{t}}(|\nabla u|^2+\frac{1}{N}|\nabla u|^{\frac{20}{9}}+|\nabla^2 d|^2)dxds\nonumber\\
\leq &C\int_\Omega(|u_0|^2+|\nabla d_0|^2)dx\leq Ce_0, \label{T14.10}
\end{align}
provided $\varepsilon_1$ is small enough and $t\leq \min\{t_0,\varepsilon^2R^2\}$.
Combining (\ref{T14.8})--(\ref{T14.10}) yields
\begin{equation}\label{T14.11}
\int_0^{t}\int_\Omega(|u|^4+|\nabla d|^4)dxds\leq C\varepsilon^2\int_\Omega(|u_0|^2+|\nabla d_0|^2)dx\leq C\varepsilon^2e_0,
\end{equation}
from which, recalling $|d|\leq1$ and using equation (\ref{T11.7}), one gets
\begin{equation}\label{T14.13}
\int_0^{t}\int_\Omega|d_t|^2dxdt\leq C\int_\Omega(|u_0|^2+|\nabla d_0|^2)dx\leq Ce_0,
\end{equation}
provide $t\leq\min\{t_0,\varepsilon^2R^2\}$.

Note that $p$ satisfies
\begin{equation*}
\left\{\begin{array}{l}
\Delta p=\textmd{div}\textmd{div}(S_N-\nabla d\odot\nabla d-u\otimes u),\quad\mbox{ in }\Omega,\\
p\mbox{ is }2\pi\mbox{ periodic },\quad \int_\Omega pdx=0.
\end{array}\right.
\end{equation*}
On account of (\ref{T14.10}) and (\ref{T14.11}), one can apply elliptic estimates to obtain
\begin{equation}\label{T14.12}
\int_0^{t}\int_\Omega|p|^{\frac{20}{11}}dxdt\leq C\left[\int_\Omega(|u_0|^2+|\nabla d_0|^2)dx+1\right]\leq Ce_0,
\end{equation}
for any $t\leq\min\{t_0,\varepsilon^2R^2\}$.

If $t_0\geq\varepsilon^2R^2$, then one can take $T_1=\varepsilon^2R^3$, and the conclusion follows from (\ref{T14.10})--(\ref{T14.12}) and Proposition \ref{T1prop3.1}. Suppose that $t_0\leq\varepsilon^2R^2$, then (\ref{T14.10})--(\ref{T14.12}) hold true for any $t\leq t_0$.
Take a cut-off function $\phi\in C_0^\infty(B_{2R}(0))$, such that $\phi\equiv1$ on $B_R(0)$, $|\nabla\phi|\leq\frac{C}{R}$ and $|\nabla^2\phi|\leq\frac{C}{R^2}$. On account of (\ref{T14.6}) and using (\ref{T14.10})--(\ref{T14.12}), one can apply Lemma \ref{T1lem4.1} to deduce that
\begin{align}
&2\varepsilon^2+\frac{1}{2}\sup_{x\in\overline\Omega}\int_0^{t_0}\int_{B_R(x)}|\Delta d|^2dydt\nonumber\\
=&\sup_{(x,t)\in Q_{t_0}}\int_{B_R(x)}(|u|^2+|\nabla d|^2)dy+\frac{1}{2}\sup_{x\in\Omega}\int_0^{t_0}\int_{B_R(x)}|\Delta d|^2dydt\nonumber\\
\leq&\sup_{(x,t)\in Q_{t_0}}\int_\Omega(|u|^2+|\nabla d|^2)\phi(x+\cdot)dy+\frac{1}{2}\sup_{x\in\Omega}\int_{Q_{t_0}}|\Delta d|^2\phi(x+\cdot)dydt\nonumber\\
\leq&\sup_{x\in\Omega}\int_{\Omega}(|u_0|^2+|\nabla d_0|^2)\phi(x+\cdot)dy+C\sup_{x\in\Omega}\int_{Q_{t_0}}[(|u|^3+|u||\nabla d|^2+|u||p|\nonumber\\
&+|u||\nabla u|+\frac{1}{N}|u||\nabla u|^{\frac{11}{9}}+|\nabla d||d_t|)|\nabla\phi(x+\cdot)|+|\nabla d|^4\phi(x+\cdot)]dydt\nonumber\\
\leq&\varepsilon^2+\frac{C}{R}\sup_{x\in\overline\Omega}\int_0^{t_0}\int_{B_{2R}(x)}(|u|^3+|u||\nabla d|^2+|u||p|+|u||\nabla u|\nonumber\\
&+\frac{1}{N}|u||\nabla u|^{\frac{11}{9}}+|\nabla d||d_t|)dydt+C\sup_{x\in\Omega}\int_0^{t_0}\int_{B_{2R}(x)}|\nabla d|^4dydt\nonumber\\
\leq&\varepsilon^2+\frac{C}{R}(R^2t_0)^{\frac{1}{5}}\left(\int_{Q_{t_0}}|u|^4dxdt\right)^{\frac{1}{4}}\left(\int_{Q_{t_0}}(N^{-\frac{20}{11}}|\nabla u|^{\frac{20}{9}}+|p|^{\frac{20}{11}})dxdt\right)^{\frac{11}{20}}\nonumber\\
&+\frac{C}{R}\left(\int_{Q_{t_0}}|u|^2dxdt\right)^{\frac{1}{2}}\left(\int_{Q_{t_0}}(|u|^4+|\nabla d|^4+|\nabla u|^2)dxdt\right)^{\frac{1}{2}}\nonumber\\
&+\frac{C}{R}\left(\int_{Q_{t_0}}|\nabla d|^2dxdt\right)^{\frac{1}{2}}\left(\int_{Q_{t_0}}|d_t|^2dxdt\right)^{\frac{1}{2}}+C\sup_{x\in\Omega}\int_0^{t_0}\int_{B_{2R}(x)}|\nabla d|^4dydt\nonumber\\
\leq&\varepsilon^2+\frac{C}{R}\left[(R^2t_0)^{\frac{1}{5}}(\varepsilon^2e_0)^{\frac{1}{4}}e_0^{\frac{11}{20}}+(e_0t_0)^{\frac{1}{2}}e_0^{\frac{1}{2}}\right]
+C\sup_{x\in\Omega}\int_0^{t_0}\int_{B_{2R}(x)}|\nabla d|^4dydt\nonumber\\
=&\varepsilon^2+C\left(\frac{t_0}{R^3}\right)^{\frac{1}{5}}\varepsilon^{\frac{1}{2}}e_0^{\frac{4}{5}}+C\left(\frac{t_0}{R^2}\right)^{\frac{1}{2}}e_0
+C\sup_{x\in\Omega}\int_0^{t_0}\int_{B_{2R}(x)}|\nabla d|^4dydt\nonumber\\
\leq&\varepsilon^2+C\left(\frac{t_0}{R^3}\right)^{\frac{1}{5}}e_0+C\sup_{x\in\Omega}\int_0^{t_0}\int_{B_{2R}(x)}|\nabla d|^4dydt,\label{T14.7}
\end{align}
provided $t_0\leq\min\{\varepsilon^2R^2, R^3\}$. We now estimate the term
$$
\sup_{x\in\Omega}\int_0^{t_0}\int_{B_{2R}(x)}|\nabla d|^4dydt.
$$
To this end, noticing that any ball of radius $2R$ can be covered by a number of balls of radius $R$, one has that
\begin{eqnarray*}
&\sup_{(x,t)\in Q_{t_0}}\int_{B_{2R}(x)}|\nabla d|^2dy\leq C\sup_{(x,t)\in Q_{t_0}}\int_{B_R(x)}|\nabla d|^2dy,\\
&\sup_{x\in\overline\Omega}\int_0^{t_0}\int_{B_{2R(x)}}|\Delta d|^2dydt\leq C\sup_{x\in\overline\Omega}\int_0^{t_0}\int_{B_R(x)}|\Delta d|^2dydt.
\end{eqnarray*}
It follows from these these estimates, (\ref{T14.6}), (\ref{T14.10}) and Ladyzhenskaya's inequality that
\begin{align*}
&\sup_{x\in\Omega}\int_0^{t_0}\int_{B_R(x)}|\nabla d|^4dydt\leq\sup_{x\in\overline\Omega}\int_0^{t_0}\int_{B_{2R}(x)}|\nabla d\phi(x+\cdot)|^4dydt\nonumber\\
\leq&C\sup_{x\in\Omega}\int_0^{t_0}\left(\int_{B_{2R}(x)}|\nabla d\phi(x+\cdot)|^2dy\right)\left(\int_{B_{2R}(x)}|\nabla(\nabla d\phi(x+\cdot))|^2dy\right)dt\nonumber\\
\leq&C\varepsilon^2\sup_{x\in\Omega}\int_0^{t_0}\int_{B_{2R}(x)}(|\nabla^2(d\phi(x+\cdot))|^2+|\nabla(d\nabla\phi(x+\cdot))|^2)dydt\nonumber\\
\leq&C\varepsilon^2\sup_{x\in\Omega}\int_0^{t_0}\int_{B_{2R}(x)}(|\Delta(d\phi(x+\cdot))|^2+|\nabla(d\nabla\phi(x+\cdot))|^2)dydt\nonumber\\
\leq&C\varepsilon^2\sup_{x\in\Omega}\int_0^{t_0}\int_{B_{2R}(x)}\left(|\Delta d|^2+\frac{|\nabla d|^2}{R^2}+\frac{1}{R^3}\right)dydt\nonumber\\
\leq&C\varepsilon^2\left(\sup_{x\in\Omega}\int_0^{t_0}\int_{B_{2R}(x)}|\Delta d|^2dydt+\frac{t_0}{R^2}\sup_{0\leq t\leq t_0}\int_\Omega|\nabla d|^2dx+\frac{t_0}{R^2}\right)\nonumber\\
\leq&C\varepsilon^2\sup_{x\in\Omega}\int_0^{t_0}\int_{B_{2R}(x)}|\Delta d|^2dydt+C\frac{t_0}{R^2}e_0,
\end{align*}
provided $t_0\leq\min\{\varepsilon^2R^2, R^3\}$. In the above, we have used elliptic estimates to the Laplace equations. Substituting this into (\ref{T14.7}) and taking $\varepsilon_1$ small enough yield
$$
\varepsilon^2\leq C\left(\frac{t_0}{R^3}\right)^{\frac{1}{5}}e_0+C\frac{t_0}{R^2}e_0\leq C\left(\frac{t_0}{R^3}\right)^{\frac{1}{5}}e_0,
$$
which gives $\varepsilon^4\leq \left(\frac{t_0}{R^3}\right)^{\frac{1}{5}}e_0$, and thus
$$
t_0\geq\left(\frac{\varepsilon^4}{e_0}\right)^5R^3,
$$
provided $t_0\leq\min\{\varepsilon^2R^2, R^3\}$. Therefore, one can take $T_1=\left(\frac{\varepsilon^4}{e_0}\right)^5R^3$ in case that $t_0\leq\varepsilon^2R^2$. Combining these two cases, one can finally choose
$$
T_1=\left\{\left(\frac{\varepsilon^4}{e_0}\right)^5, \varepsilon^2\right\}R^3=\left(\frac{\varepsilon^4}{e_0}\right)^5R^3,\quad\tau_0=\tau_0(\varepsilon,e_0)=\left(\frac{\varepsilon^4}{e_0}\right)^5,
$$
and the conclusion follows from (\ref{T14.10})--(\ref{T14.12}) by Proposition \ref{T1prop3.1}. The proof is complete.
\end{proof}

\begin{lemma}\label{T1lem4.3}
In addition to the assumptions in Proposition \ref{T1prop3.1}, we assume further that $d_0\in C^2(\overline\Omega)$. Let $(u, d, \theta, p)$ be the solution obtained in Proposition \ref{T1prop3.1}, and let $\varepsilon_1$ and $e_0$ be the same numbers as in Lemma \ref{T1lem4.2}. Then there exists a positive constant $\varepsilon_0<<\varepsilon_1$ depending only on $e_0$, such that if
$$
\sup_{x\in\Omega}\int_{B_{2R_0}(x)}(|u_0|^2+|\nabla d_0|^2)dy\leq\varepsilon_0^2,\quad\mbox{ for some }R_0\in(0,1],
$$
then it holds that
$$
\|\nabla d\|_{C^{\beta,\beta/2}(\overline\Omega\times[0, \tfrac{T_0}{2}])}\leq C(N, e_0,\|d_0\|_{C^2(\bar\Omega)})
$$
for some $\beta\in(0,1)$, where $T_0=\tau_0R_0^3$ with $\tau_0=\tau_0(\varepsilon_0, e_0)=\left(\frac{\varepsilon_0^4}{e_0}\right)^5$ as in Lemma \ref{T1lem4.2}.
\end{lemma}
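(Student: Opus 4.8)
The plan is to treat \eqref{T11.7} as a linear heat equation for $d$ whose right-hand side, after exploiting the cut-off structure, is controlled by $u$ and $\nabla d$, and then to bootstrap the integrability of $\nabla d$ on $Q_{T_0}$ until $d$ lands in a parabolic Sobolev space that embeds into $C^{1+\beta,(1+\beta)/2}$. \textbf{Step 1 (collecting the $M$-independent a priori bounds).} Applying Lemma~\ref{T1lem4.2} with $\varepsilon=\varepsilon_0$ and $R=R_0$ (so that the common existence time there equals $T_0$), one has on $Q_{T_0}$ that $d\in L^\infty(0,T_0;H^1)\cap L^2(0,T_0;H^2)$, $d_t\in L^2(Q_{T_0})$, $|d|\le1$, together with the crucial \emph{smallness} $\int_{Q_{T_0}}|\nabla d|^4\,dxdt\le C\varepsilon_0^2 e_0$, and $u\in L^\infty(0,T_0;L^2)\cap L^{20/9}(0,T_0;W^{1,20/9})$. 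By the Gagliardo--Nirenberg inequality the last bound upgrades to $u\in L^{40/9}(Q_{T_0})$ with $\|u\|_{L^{40/9}(Q_{T_0})}\le C(N,e_0)$; this extra integrability, \emph{strictly above} $L^4(Q_{T_0})$, is exactly what the non-standard term $\tfrac1N|\nabla u|^{2/9}\nabla u$ in $S_N$ is there to provide. Moreover, since $\chi_M(s)\le s$, one has the $M$-independent pointwise bound $|\chi_M(|\nabla d|^2)d|\le|\nabla d|^2$; and $d_0\in C^2(\overline\Omega)$ supplies the compatibility needed to run $L^p$ parabolic maximal regularity up to $t=0$ on the torus.

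\textbf{Step 2 (breaking the $2$D criticality: $\nabla d\in L^{4+\eta}$).} Rewrite \eqref{T11.7} as $d_t-\Delta d=g$, $g:=-u\cdot\nabla d+\chi_M(|\nabla d|^2)d$, so $|g|\le|u||\nabla d|+|\nabla d|^2$. For $p$ slightly above $2$, i.e. $2p=4+\eta$ slightly above $4$, the $L^p$ maximal regularity estimate gives $\|d\|_{W^{2,1}_p(Q_{T_0})}\le C\big(\|u\cdot\nabla d\|_{L^p(Q_{T_0})}+\|\nabla d\|_{L^{4+\eta}(Q_{T_0})}^2+\|d_0\|_{C^2(\overline\Omega)}\big)$. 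The drift term is subcritical: pairing $u\in L^{40/9}$ with $\nabla d\in L^4$ it lies in $L^p$ with norm $\le C(N,e_0)$. The quadratic term is energy-critical in two dimensions, but here the relevant interpolation exponent is exactly $\tfrac12$: interpolating $L^{4+\eta}$ between $L^4(Q_{T_0})$ and the space $L^{s}(Q_{T_0})$ produced by $W^{2,1}_p$ (where $\tfrac1s=\tfrac1p-\tfrac14$) one obtains $\|\nabla d\|_{L^{4+\eta}(Q_{T_0})}^2\le C\,\|\nabla d\|_{L^4(Q_{T_0})}\,\|d\|_{W^{2,1}_p(Q_{T_0})}$, i.e. this term is \emph{linear} in $\|d\|_{W^{2,1}_p}$ with the small coefficient $C\|\nabla d\|_{L^4(Q_{T_0})}\le C\varepsilon_0 e_0^{1/2}$. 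Choosing $\varepsilon_0$ small enough in terms of $e_0$ (this is what forces $\varepsilon_0\ll\varepsilon_1$) lets this term be absorbed, yielding $\|d\|_{W^{2,1}_p(Q_{T_0})}+\|\nabla d\|_{L^{4+\eta}(Q_{T_0})}\le C(N,e_0,\|d_0\|_{C^2(\overline\Omega)})$. To make the absorption rigorous one runs this estimate on the smooth Galerkin approximations of Section~\ref{T1sec3} (or on a suitable time truncation) before passing to the limit, so that the quantity being absorbed is a priori finite.

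\textbf{Step 3 (subcritical bootstrap and conclusion).} Once $\nabla d\in L^{4+\eta}(Q_{T_0})$ one has $|\nabla d|^2\in L^{(4+\eta)/2}$ with exponent $>2$, so the quadratic term is no longer critical and needs no further smallness. If $\nabla d\in L^q(Q_{T_0})$, the drift term lies in $L^r$ with $\tfrac1r=\tfrac9{40}+\tfrac1q$, the quadratic term is at least as integrable, and $L^r$ parabolic regularity (still with $d_0\in C^2$) gives $\nabla d\in L^{q'}$ with $\tfrac1{q'}=\tfrac1r-\tfrac14=\tfrac1q-\tfrac1{40}$ as long as $r<4$; thus $\tfrac1q$ decreases by the fixed amount $\tfrac1{40}$ at each step, and the strictly positive margin $\eta$ obtained in Step~2 ensures that after finitely many steps one overshoots the critical exponent $4=n+2$ rather than landing on it, i.e. $d\in W^{2,1}_r(Q_{T_0})$ with $r>n+2=4$. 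The parabolic embedding $W^{2,1}_r(Q_{T_0})\hookrightarrow C^{1+\beta,(1+\beta)/2}(\overline\Omega\times[0,T_0])$ with $\beta=1-\tfrac4r$ then gives $\|\nabla d\|_{C^{\beta,\beta/2}(\overline\Omega\times[0,T_0])}\le C(N,e_0,\|d_0\|_{C^2(\overline\Omega)})$, and restricting to $[0,\tfrac{T_0}{2}]$ gives the claim; every constant along the way depends only on the $M$-independent quantities of Step~1, so the bound is uniform in $M$.

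\textbf{Main obstacle.} The essential difficulty is the two-dimensional energy-criticality of $|\nabla d|^2d$: by itself it cannot raise the integrability of $\nabla d$ above the $L^4$ scale, so no bootstrap can be started from the basic energy estimate. This is precisely why the hypothesis requires the local energy to be below $\varepsilon_0^2$ with $\varepsilon_0\ll\varepsilon_1$ — the resulting smallness of $\|\nabla d\|_{L^4(Q_{T_0})}$ is what converts the critical quadratic term into an absorbable one in Step~2. The other non-negotiable ingredient is that the drift $u\cdot\nabla d$, which powers the subcritical iteration of Step~3, is controlled only because $u\in L^{40/9}(Q_{T_0})\supsetneq L^4(Q_{T_0})$ — exactly what the approximate stress $S_N$ was designed to deliver — and this is also the source of the $N$-dependence of the final constant.
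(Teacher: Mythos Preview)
Your proof is correct and genuinely different from the paper's. The paper follows the classical harmonic-map route of Lin--Lin--Wang: on each parabolic cylinder it decomposes $d=d_1+d_2$ into a caloric part and a correction, uses the decay estimate $\int_{Q_r}|\nabla d_1|^4\le C(r/R)^4\int_{Q_R}|\nabla d_1|^4$ together with the energy bound $\int_{Q_R}|\nabla d_2|^4\le C\int_{Q_R}(|u|^4+|\nabla d|^4)\cdot\int_{Q_R}|\nabla d|^4$, and iterates to a Morrey estimate $\int_{Q_r}|\nabla d|^4\le Cr^{4\alpha}$; the $L^{40/9}$ integrability of $u$ then feeds into the Campanato iteration for $\nabla d-(\nabla d)_r$, yielding H\"older continuity directly by Campanato embedding. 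Your route replaces this local decay machinery by global $L^p$ maximal regularity plus the interpolation identity that makes the quadratic term exactly linear in $\|d\|_{W^{2,1}_p}$ with small coefficient. Both arguments rest on the same two inputs---smallness of $\|\nabla d\|_{L^4(Q_{T_0})}$ and the supercritical integrability $u\in L^{40/9}$ coming from $S_N$---and both produce $M$-independent constants; the paper's approach is slightly more self-contained in that the Campanato iteration never requires the absorbed quantity to be finite a priori, whereas your Step~2 does (your remark about Galerkin approximations is one fix; cleaner is to observe that for \emph{fixed} $M$ the cutoff $\chi_M\le M$ gives an $M$-dependent bound $\nabla d\in L^{4+\eta}$ qualitatively, after which your $M$-independent absorption upgrades it quantitatively). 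One small imprecision in Step~3: for $4<q<40/9$ the quadratic term $|\nabla d|^2\in L^{q/2}$ is actually \emph{less} integrable than the drift, not more, but since $1/q'=2/q-1/4<1/q$ in that range the bootstrap still advances, so the conclusion is unaffected.
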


\begin{proof}
The proof is similar to that of Lemma 2.1 in Lin-Lin-Wang \cite{Lin4}. It follows from (\ref{T14.11}) that
\begin{equation}\label{a}
\int_0^{T_0}\int_\Omega(|\nabla d|^4+|u|^4)dxdt\leq C\varepsilon_0^2e_0.
\end{equation}

Let $0<T<\frac{T_0}{2}$ and pick arbitrary $z_0=(x_0, t_0)\in\Omega\times[T, \tfrac{T_0}{2}]$. For any $0<R\leq\tfrac{T}{2}$ and $z_1=(x_1,t_1)\in Q_R(z_0)$, one can decompose $d=d_1+d_2$, with $d_1$ being the unique solution to
\begin{equation*}
\left\{\begin{array}{l}
\partial_td_1-\Delta d_1=0,\quad\mbox{ in }Q_R(z_1),\\
d_1|_{\partial_pQ_R(z_1)}=d,
\end{array}
\right.\end{equation*}
while $d_2$ solving the following problem
\begin{equation*}
\left\{\begin{array}{l}
\partial_t d_2-\Delta d_2=\chi_M(|\nabla d|^2)d-(u\cdot\nabla)d,\quad\mbox{ in }Q_R(z_1),\\
d_2|_{\partial_pQ_{R}(z_1)}=0.
\end{array}
\right.\end{equation*}
For $d_1$, it holds that
\begin{eqnarray}
&\hspace{-6mm}\int_{Q_r(z_1)}|\nabla d_1-(\nabla d_1)_r|^4dxdt\leq C\left(\frac{r}{R}\right)^8\int_{Q_R(z_1)}|\nabla d_1-(\nabla d_1)_R|^4dxdt,\label{T14.17-2}\\
&\int_{Q_r(z_1)}|\nabla d_1|^4dxdt\leq C\left(\frac{r}{R}\right)^4\int_{Q_R(z_1)}|\nabla d_1|^4dxdt,\label{T14.17-3}
\end{eqnarray}
for any $0<r<R\leq\tfrac{T}{2}$, where
$$
(\nabla d_1)_R=\frac{1}{|Q_R(z_1)|}\int_{Q_R(z_1)}\nabla d_1dxdt.
$$
For $d_2$, multiplying the equations of $d_2$ by $-\Delta d_2$ and integrating over $B_R(x_1)\times(t_1-R^2, t_1)$ yield
\begin{align*}
\sup_{t_1-R^2\leq t\leq t_1}&\int_{B_{R}(x)}|\nabla d_2(t)|^2dx+\int_{Q_R(z_1)}|\Delta d_2|^2dxdt\\
\leq& C\int_{Q_R(z_1)}(|\nabla d|^4+|u|^2|\nabla d|^2)dxdt,
\end{align*}
which, by Ladyzhenskaya's inequality, gives
\begin{align}
\int_{Q_R(z_1)}|\nabla d_2|^4dxdt\leq&C\left(\sup_{t_1-R^2\leq t\leq t_1}\int_{B_R(x_1)}|\nabla d_2|^2dx\right)\int_{Q_R(z_1)}|\Delta d_2|^2dxdt\nonumber\\
\leq&C\left[\int_{Q_R(z_1)}(|\nabla d|^4+|u|^2|\nabla d|^2)dxdt\right]^2\nonumber\\
\leq&C\int_{Q_R(z_1)}|\nabla d|^4dxdt\int_{Q_R(z_1)}(|u|^4+|\nabla d|^4)dxdt.\label{T14.17-4}
\end{align}
It follows from (\ref{T14.17-2})--(\ref{T14.17-4}) and (\ref{a}) that
\begin{align}
&\int_{Q_r(z_1)}|\nabla d|^4dxdt\leq C\int_{Q_R(z_1)}(|\nabla d_1|^4+|\nabla d_2|^4)dxdt\nonumber\\
\leq& C\left(\frac{r}{R}\right)^4\int_{Q_R(z_1)}|\nabla d_1|^4dxdt+C\int_{Q_R(z_1)}|\nabla d_2|^4dxdt\nonumber\\
\leq& C\left(\frac{r}{R}\right)^4\int_{Q_R(z_1)}|\nabla d|^4dxdt+C\int_{Q_R(z_1)}|\nabla d_2|^4dxdt\nonumber\\
\leq&C\left(\frac{r}{R}\right)^4\int_{Q_R(z_1)}|\nabla d|^4dxdt+C\int_{Q_R(z_1)}|\nabla d|^4dxdt\int_{Q_R(z_1)}(|u|^4+|\nabla d|^4)dxdt\nonumber\\
\leq&C\left[\left(\frac{r}{R}\right)^4+\varepsilon_0^2e_0\right]\int_{Q_R(z_1)}|\nabla d|^4dxdt,\label{T14.17-5}
\end{align}
and
\begin{align}
&\int_{Q_r(z_1)}|\nabla d-(\nabla d)_r|^4dxdt\nonumber\\
\leq& C\int_{Q_r(z_1)}(|\nabla d_1-(\nabla d_1)_r|^4+|\nabla d_2-(\nabla d_2)_r|^4)dxdt\nonumber\\
\leq&\left(\frac{r}{R}\right)^8\int_{Q_R(z_1)}|\nabla d_1-(\nabla d_1)_R|^4dxdt+C\int_{Q_R(z_1)}|\nabla d_2-(\nabla d_2)_R|^4dxdt\nonumber\\
\leq&\left(\frac{r}{R}\right)^8\int_{Q_R(z_1)}|\nabla d-(\nabla d)_R|^4dxdt+C\int_{Q_R(z_1)}|\nabla d_2-(\nabla d_2)_R|^4dxdt\nonumber\\
\leq&C\int_{Q_R(z_1)}(|u|^4+|\nabla d|^4)dxdt\int_{Q_R(z_1)}|\nabla d|^4dxdt\nonumber\\
&+C\left(\frac{r}{R}\right)^8\int_{Q_R(z_1)}|\nabla d-(\nabla d)_R|^4dxdt, \label{T14.17-6}
\end{align}
for any $0<r<R\leq\tfrac{T}{2}$. Choose  $0<\theta_0<<1,$ and $\varepsilon_0<<\varepsilon_1$ such that $2C\theta_0^4\leq\theta_0^{4\alpha}$ with $\alpha\in(\tfrac{1}{10}, 1)$ and $\varepsilon_0^2e_0\leq\theta_0^4$.
Then it follows from (\ref{T14.17-5}) that
$$
\int_{Q_{\theta_0R}(z_1)}|\nabla d|^4dxdt\leq\theta_0^{4\alpha}\int_{Q_R(z_1)}|\nabla d|^4dxdt,\quad \forall 0<R<\frac{T}{2},
$$
from which, by iterating, one can obtain
$$
\int_{Q_r(z_1)}|\nabla d|^4dxdt\leq C\left(\frac{r}{R}\right)^{4\alpha}\int_{Q_R(z_1)}|\nabla d|^4dxdt,\quad\forall0<r<R\leq\frac{T}{2},
$$
and thus, recalling (\ref{a}), we get
\begin{equation}
\int_{Q_R(z_2)}|\nabla d|^4dxdt\leq C(T, e_0)R^{4\alpha},\quad\forall0<R\leq \frac{T}{2}.\label{T14.17-7}
\end{equation}
By the estimates in Lemma \ref{T1lem4.2} and using the Gagliado-Nirenberg inequality, one can check easily that
$$
\int_{Q_{T_0}}|u|^{\frac{40}{9}}dxdt\leq C(N,e_0)
$$
and thus, it follows from the H\"older's inequality that
\begin{align*}
\int_{Q_R(z_1)}|u|^4dxdt\leq\left(\int_{Q_R(z_1)}|u|^{\frac{40}{9}}dxdt\right)^{{\frac{9}{10}}}R^{\frac{2}{5}}\leq C(N, e_0)R^{\frac{2}{5}}.
\end{align*}
Substituting this inequality and (\ref{T14.17-7}) into (\ref{T14.17-6}) leads to
\begin{align*}
\int_{Q_r(z_1)}|\nabla d-(\nabla d)_r|^4dxdt\leq& C\left(\frac{r}{R}\right)^8\int_{Q_R(z_1)}|\nabla d-(\nabla d)_R|^4dxdt\\
&+C(N,T,e_0)R^{4\alpha+{\frac{2}{5}}}
\end{align*}
for any $0<r<R\leq \frac{T}{2}$. From which, by iterating, one gets
\begin{align*}
\int_{Q_r(z_1)}|\nabla d-(\nabla d)_r|^4dxdt\leq& C(N,T,e_0)\left(\frac{r}{R}\right)^{4\alpha+{\frac{2}{5}}}\int_{Q_R(z_1)}|\nabla d-(\nabla d)_R|^4dxdt\\
&+C(N,T,e_0)r^{4\alpha+{\frac{2}{5}}},
\end{align*}
for any $0<r<R\leq\tfrac{T}{2}$, and thus
$$
\int_{Q_r(z_1)}|\nabla d-(\nabla d)_R|^4dxdt\leq C(N,T,e_0)R^{4\alpha+{\frac{2}{5}}},\quad\forall 0<R\leq\frac{T}{2}.
$$
This implies that $\nabla d\in C^{\beta,\beta/2}(\overline\Omega[T,T_0])$, and
$$
\|\nabla d\|_{C^{\beta,\beta/2}(\bar\Omega\times[T, \tfrac{T_1}{2}])}\leq C(N,T,e_0)
$$
for some $\beta\in(0,1)$.

Similarly, we can deduce the $C^{\beta,\beta/2}$ estimates up to the initial time, and finally obtain
$$
\|\nabla d\|_{C^{\beta,\beta/2}(\bar\Omega\times[T, \tfrac{T_0}{2}])}\leq C(N,e_0,\|d\|_{C^2(\bar\Omega)}),
$$
which yields the desired conclusion.
\end{proof}

Now, we can prove Proposition \ref{T1prop4.1}.

\textbf{Proof of Proposition \ref{T1prop4.1}.} Denote by $\{(u_M, d_M, \theta_M, p_M)\}$ the weak solutions obtained by Proposition \ref{T1prop3.1}. By Lemma \ref{T1lem4.2} and Lemma \ref{T1lem4.3}, there exist two positive constants $\varepsilon_0$ depending only on $e_0$ and $\tau_0=\tau_0(\varepsilon_0, e_0)$, such that
\begin{eqnarray}
&\sup_{0\leq t\leq T_0}\int_\Omega(|u_M|^2+|\nabla d_M|^2)dx+\int_{Q_{T_0}}(|\nabla u_M|^2+\frac{1}{N}|\nabla u_M|^{\frac{20}{9}}\nonumber\\
&+|\Delta d_M|^2+|\partial_td_M|^2+|p_M|^{\frac{20}{11}})dxdt\leq CE_0,\label{T14.18}\\
&\|\nabla d_M\|_{C^{\beta,\frac{\beta}{2}}(\bar\Omega\times[0, \tfrac{T_0}{2}])}\leq C(N, E_0),\quad\inf_{(x, t)\in Q_{T_0}}\theta_M\geq\underline\theta_0,\label{T14.19-1}\\
&\vspace{-8mm}\sup_{0\leq t\leq T_0}\int_\Omega\theta_M(t)dx\leq CE_0,\quad\int_{Q_{T_0}}|\nabla\theta_M|^qdxdt\leq CE_0^q,~~ q\in(1, \frac{4}{3}),\label{T14.21}
\end{eqnarray}
where $T_0=\tau_0R_0^3$.

By the Gagliado-Nirenberg inequality, we can deduce from (\ref{T14.18}) and (\ref{T14.21}) that
\begin{equation}\label{T14.22}
\int_0^{T_0}\int_\Omega(|u_M|^{\frac{40}{9}}+|\nabla d_M|^4+\theta_M^r)dxdt\leq C(r, E_0),\quad r\in(1,2).
\end{equation}

In view of the estimates in the above, there is a subsequence, denoted still by $\{(u_M,d_M,$ $\theta_M, p_M)\}$, and $(u, d, \theta, p)$, such that
\begin{eqnarray}
&&u_M\rightarrow u,\quad\mbox{ weakly in }L^{\frac{20}{9}}(0,T_0; W^{1,\frac{20}{9}}(\Omega)),\label{T14.25}\\
&&|\nabla u_M|^{\frac{2}{9}}\nabla u_M\rightarrow\overline{|\nabla u|^{\frac{2}{9}}\nabla u},\quad\mbox{ weakly in }L^{\frac{20}{11}}(Q_{T_0}),\label{T14.25-1}\\
&&d_M\rightarrow d,\quad\mbox{ weakly in }L^2(0, T_0; H^2(\Omega)),\label{T14.26}\\
&&d_M\rightarrow d,\quad\mbox{ in }C^1(\overline{Q_{T_0}}),\label{T14.26-0}\\
&&p_M\rightarrow p,\quad\mbox{ weakly in }L^{\frac{20}{11}}(Q_{T_0}),\label{T12.26-1}\\
&&\theta_M\rightarrow\theta,\quad\mbox{ weakly in }L^q(0, T_0; W^{1,q}(\Omega)),\quad q\in (1, \frac{4}{3}),\label{T14.27}
\end{eqnarray}
and the corresponding estimates (\ref{T14.18})--(\ref{T14.21}) hold true for $(u, d, \theta, p)$.

Due to the a priori bounds (\ref{T14.18})--(\ref{T14.21}), similar to (\ref{T13.12})--(\ref{T13.14}), one can show that
\begin{eqnarray}
&&u_M\rightarrow u,\quad\mbox{ strongly in }L^q(Q_{T_0}),\quad\mbox { for }q\in(1, \frac{40}{9}),\label{T14.31}\\
&&d_M\rightarrow d,\quad\mbox{ strongly in }L^q(0, T_0; W^{1,q}(\Omega)),\quad\mbox{ for }q\in(1,4 ),\label{T14.32}\\
&&\theta_M\rightarrow\theta,\quad\mbox{ strongly in }L^r(Q_{T_0}),\quad\mbox{ for }r\in(1,2),\label{T14.33}
\end{eqnarray}
and $(u, d, \theta, p)$ satisfies the system
\begin{eqnarray}
&&u_t+(u\cdot\nabla)u+\nabla p=\textmd{div}\left(\mu(\theta)(\nabla u+\nabla u^T)\right.\nonumber\\
&&~~~~~~~~~~~~~~~~~+\frac{1}{N}\overline{|\nabla u|^{\frac{2}{9}}\nabla u}-\nabla d\odot\nabla d),\label{T14.33-1}\\
&&\textmd{div}u=0,\nonumber\\
&&d_t+(u\cdot\nabla) d=\Delta d+|\nabla d|^2d.\label{T14.33-3}
\end{eqnarray}
Moreover, using equation (\ref{T14.33-3}) and the initial condition $|d_0|=1$, by maximal principle for parabolic equations, it holds that $|d|=1$ in $Q_{T_0}$.

Testing (\ref{T11.6}) by $u_M$, multiplying equation (\ref{T11.7}) by $-\Delta d_M$ and integrating over $\Omega$, then summing up the resulting identities, one gets that
\begin{align}
\int_\Omega&\left(\frac{|u_M(t)|^2}{2}+\frac{|\nabla d_M(t)|^2}{2}\right)dx+\int_{Q_t}\left(\frac{\mu(\theta_M)}{2}|\nabla u_M+\nabla u_M^T|^2\right.\nonumber\\
&\left.+\frac{1}{N}|\nabla u_M|^{\frac{20}{9}}+|\Delta d_M|^2\right)dxds\nonumber\\
=&-\int_{Q_t}\chi_M(|\nabla d_M|^2)d_M\cdot\Delta d_Mdxds+\int_\Omega\left(\frac{|u_0|^2}{2}+\frac{|\nabla d_0|^2}{2}\right)dx\label{T14.33-4}
\end{align}
for $t\in[0, T_0]$.
Similarly, it follows from (\ref{T14.33-1}) and (\ref{T14.33-3}) that
\begin{align}
\int_\Omega&\left(\frac{|u(t)|^2}{2}+\frac{|\nabla d(t)|^2}{2}\right)dx+\int_{Q_t}\left(\frac{\mu(\theta)}{2}|\nabla u+\nabla u|^2\right.\nonumber\\
&\left.+\frac{1}{N}\overline{|\nabla u|^{\frac{2}{9}}\nabla u}:\nabla u+|\Delta d|^2\right)dxds\nonumber\\
=&-\int_{Q_t}|\nabla d|^2d\cdot\Delta ddxds+\int_\Omega\left(\frac{|u_0|^2}{2}+\frac{|\nabla d_0|^2}{2}\right)dx\label{T14.33-5},
\end{align}
for $t\in[0, T_0]$.
Since $u_M\rightarrow u$ strongly in $L^q(Q_{T_0})$ for $1<q<\frac{40}{9}$ and $\nabla d\rightarrow\nabla d$ in $C(\overline{Q_{T_0}})$, it holds that
\begin{eqnarray*}
&&\int_\Omega(|u_M(t)|^2+|\nabla d_M(t)|^2)dx\rightarrow\int_\Omega(|u(t)|^2+|\nabla d(t)|^2)dx,~~\mbox{ a.e. }t\in[0, T_0],\label{T14.33-6}\\
&&\int_{Q_t}\chi_M(|\nabla d_M|^2)d_M\cdot\Delta d_Mdxds\rightarrow\int_{Q_t}|\nabla d|^2d\cdot\Delta ddxds.\label{T14.33-7}
\end{eqnarray*}
It follows from this and (\ref{T14.33-4}) and (\ref{T14.33-5}) that
\begin{align*}
&\lim_{n\rightarrow\infty}\int_0^t\int_\Omega\left(\left|\sqrt{\frac{\mu(\theta_M)}{2}}(\nabla u_M+\nabla u_M^T)\right|^2+\frac{1}{N}|\nabla u_M|^{\frac{20}{9}}+|\Delta d_M|^2\right)dxds\nonumber\\
=&\int_0^t\int_\Omega\left(\left|\sqrt{\frac{\mu(\theta)}{2}}(\nabla u+\nabla u^T)\right|^2+\frac{1}{N}\overline{|\nabla u|^{\frac{2}{9}}\nabla u}:\nabla u+|\Delta d|^2\right)dxds,
\end{align*}
from which, one can apply Lemma \ref{T1lem2.7} to infer that
\begin{eqnarray*}
&&\nabla u_M\rightarrow \nabla u,\quad\mbox{ strongly in }L^{\frac{20}{9}}(Q_{T_0}),\label{T14.33-8}\\
&&\Delta d_M\rightarrow \Delta d,\quad\mbox{ strongly in }L^2(Q_{T_0}),\label{T14.33-9}
\end{eqnarray*}
and thus, by taking limit $M\rightarrow\infty$, we can derive
$$
\theta_t+u\nabla\theta=\Delta\theta+S_N:\nabla u+|\Delta d+|\nabla d|^2d|^2,\quad\mbox{ in }\mathcal D'(Q_{T_0}).
$$

Finally, we show that the temperature $\theta$ satisfies the entropy inequality in the sense of distribution. Let $w_\varepsilon$ be the standard modifier and $\theta_\varepsilon=\theta*w_\varepsilon$. Then it holds that
\begin{equation}
\partial\theta_\varepsilon+\text{div}(u\theta_\varepsilon)=\Delta\theta_\varepsilon+(S_N:\nabla u+|\Delta d+|\nabla d|^2d|^2)*w_\varepsilon+r_\varepsilon,\label{T1A.1}
\end{equation}
where $r_\varepsilon=\text{div}(u\theta_\varepsilon)-\text{div}(u\theta)*w_\varepsilon$. Recall that $u\in L^{\frac{20}{9}}(0, T_0; W^{1,\frac{20}{9}}(\Omega))$ and $\theta\in L^r(Q_{T_0})$ for $r\in(1, 2)$. It follows from  Lemma 2.3 in Lions \cite{Lions} that
\begin{equation}\label{T1A.2}
r_\varepsilon\rightarrow0, \quad\mbox{ in }L^{\frac{20r}{9r+20}}(Q_{T_0})\mbox{ as }\varepsilon\rightarrow0,\quad\mbox{ for }r\in (\tfrac{20}{11}, 2).
\end{equation}
Multiplying equation (\ref{T1A.1}) by $\alpha\theta_\varepsilon^{\alpha-1}$ yields
\begin{align}
\partial_t\theta_\varepsilon^\alpha+\text{div}(u\alpha_\varepsilon^\alpha)=&\Delta\theta_\varepsilon^\alpha+\alpha\theta_\varepsilon^{\alpha-1}(S_N:\nabla u+|\Delta d+|\nabla d|^2d|^2)*w_\varepsilon\nonumber\\
&+\alpha(1-\alpha)\theta_\varepsilon^{\alpha-2}|\nabla\theta_\varepsilon|^2+\alpha r_\varepsilon\theta_\varepsilon^{\alpha-1}.\label{T1A.3}
\end{align}
Due to (\ref{T1A.2}) and $\theta\geq\underline\theta_0>0$, it holds that
\begin{eqnarray}
&&\theta_\varepsilon^\alpha\rightarrow\theta^\alpha,\quad\mbox{ in }L^{\frac{r}{\alpha}}(Q_{T_0}),\label{T1A.4}\\
&&u\theta_\varepsilon^\alpha\rightarrow u\theta^\alpha,\quad\mbox{ in }L^{\frac{4r}{4\alpha+r}}(Q_{T_0}),\label{T1A.5}\\
&&\alpha r_\varepsilon\theta_\varepsilon^{\alpha-1}\rightarrow0,\quad\mbox{ in }\mathcal D'(Q_{T_0}),\label{T1A.6}\\
&&(S_N:\nabla u+|\Delta d+|\nabla d|^2d|^2)*w_\varepsilon\theta_\varepsilon^{\alpha-1}\nonumber\\
&&\qquad\rightarrow(S_N:\nabla u+|\Delta d+|\nabla d|^2d|^2)\theta^{\alpha-1},\quad\mbox{ in }\mathcal D'(Q_{T_0}).\label{T1A.7}
\end{eqnarray}
In (\ref{T1A.7}), we have used the fact that
$$
(S_N:\nabla u+|\Delta d+|\nabla d|^2d|^2)*w_\varepsilon\rightarrow (S_N:\nabla u+|\Delta d+|\nabla d|^2d|^2),\quad\mbox{ in }L^1(Q_{T_0}),
$$
and that $\theta_\varepsilon^{\alpha-1}$ is uniformly bounded and converges to $\theta^{\alpha-1}$ a.e. in $Q_{T_0}$. For the term $\theta_\varepsilon^{\alpha-2}|\nabla\theta_\varepsilon|^2$, since $\theta_\varepsilon\rightarrow\theta$ in $L^r(Q_{T_0})$ for $r\in(1,2)$, it follows that $\nabla\theta_\varepsilon^{\frac{\alpha}{2}}\rightarrow\nabla\theta^{\frac{\alpha}{2}}$ weakly in $L^2(Q_{T_0})$, and consequently, by the weakly lower semicontinuity of norms, for any $0\leq\varphi\in C_0^\infty(Q_{T_0})$, we have
\begin{align*}
&\int_0^{T_0}\int_\Omega\theta^{\alpha-2}|\nabla\theta|^2\varphi dxdt=\left(\frac{2}{\alpha}\right)^2\int_0^{T_0}\int_\Omega|\nabla\theta^{\frac{\alpha}{2}}\sqrt\varphi|^2dxdt\nonumber\\
\leq&\liminf_{\varepsilon\rightarrow0}\left(\frac{2}{\alpha}\right)^2\int_0^{T_0}\int_\Omega|\nabla\theta_\varepsilon^{\frac{\alpha}{2}}\sqrt\varphi|^2dxdt=
\liminf_{\varepsilon\rightarrow0}\int_0^{T_0}\int_\Omega\theta_\varepsilon^{\alpha-2}|\nabla\theta_\varepsilon|^2\varphi dxdt.
\end{align*}
On account of this, together with (\ref{T1A.4})--(\ref{T1A.7}), one can take the limit in (\ref{T1A.3}) to conclude that the entropy inequality holds true in the sense of distribution. The proof is completed.

\section{The limit $N\rightarrow\infty$ and the local existence}\label{T1sec6}

We have shown in the previous section that, for any given $N$, the system (\ref{T14.0-1})--(\ref{T14.0-5}) with (\ref{T11.4})--(\ref{T11.5}) has a weak solution $(u_N, d_N, \theta_N, p_N)$. We will study the limit of such a sequence as $N$ goes to infinity to prove the local existence of weak solutions to the original system (\ref{T11.1})--(\ref{T11.5}). In particular, we will prove the following result.

\begin{proposition}\label{T1prop5.1}
(Local existence) Assume that the $2\pi$ periodic functions $u_0, d_0$ and $\theta_0$ satisfy
$$
u_0\in L^2_\sigma(\Omega),\quad d_0\in H^1(\Omega;S^2),\quad \theta_0\in L^1(\Omega),\quad\inf_{x\in\overline\Omega}\theta_0\geq\underline\theta_0
$$
for some positive constant $\underline\theta_0$. Let $E_0\geq1$ be an arbitrary constant such that
$$
\int_\Omega(|u_0|^2+|\nabla d_0|^2+\theta_0)dx\leq E_0.
$$
Let $\varepsilon_0$ and $\tau_0$ be the constants stated in Proposition \ref{T1prop4.1}. Suppose that
$$
\sup_{x\in\overline\Omega}\int_{B_{2R_0}(x)}(|u_0|^2+|\nabla d_0|^2)dy\leq\varepsilon_0^2,\quad\mbox{ for some }R_0\in(0, 1].
$$

Then there exists a weak solution $(u, d,\theta,p)$ to the system (\ref{T11.1})--(\ref{T11.5}) on $Q_{T_0}=\Omega\times(0, T_0)$ with $T_0=\tau_0R_0^3$, satisfying  $\inf_{(x,t)\in Q_{T_0}}\theta\geq\underline\theta_0$,
\begin{eqnarray*}
&\sup_{0\leq t\leq T_0}\int_\Omega(|u|^2+|\nabla d|^2)dx+\int_{Q_{T_0}}(|\nabla u|^2
+|\Delta d|^2+|\partial_td|^2+|p|^2)dxdt\leq CE_0,\\
&\sup_{0\leq t\leq T_0}\int_\Omega\theta_N(t)dx\leq CE_0,\quad\int_{Q_{T_0}}|\nabla\theta_N|^qdxdt\leq CE_0^q,\quad q\in(1, \frac{4}{3}),
\end{eqnarray*}
and
\begin{eqnarray*}
&\int_\Omega\left(\frac{|u(t)|^2}{2}+\frac{|\nabla d(t)|^2}{2}+\theta(t)\right)dx=\int_\Omega\left(\frac{u_0|^2}{2}+\frac{|\nabla d_0|^2}{2}+\theta_0\right)dx,
\end{eqnarray*}
for any $t\in[0, T_0]$, and the temperature $\theta$ satisfies the following entropy inequality
$$
\partial\theta^\alpha+\text{div}(u\theta^\alpha)\geq\Delta\theta^\alpha+\alpha\theta^{\alpha-1}(S:\nabla u+|\Delta d+|\nabla d|^2d|^2)+\alpha(1-\alpha)\theta^{\alpha-2}|\nabla\theta|^2
$$
in $\mathcal D'(Q_{T_0})$ for $\alpha\in(0, 1)$.
\end{proposition}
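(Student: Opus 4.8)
The plan is to pass to the limit $N\to\infty$ in the sequence of weak solutions $(u_N,d_N,\theta_N,p_N)$ furnished by Proposition \ref{T1prop4.1}, all of which are defined on the common cylinder $Q_{T_0}$ with $T_0=\tau_0R_0^3$ and satisfy the uniform bounds displayed there. First I would collect the $N$-uniform a priori estimates: from Proposition \ref{T1prop4.1} we have $\sup_t\int_\Omega(|u_N|^2+|\nabla d_N|^2)\,dx+\int_{Q_{T_0}}(|\nabla u_N|^2+|\Delta d_N|^2+|\partial_t d_N|^2)\,dxdt\le CE_0$, together with $\frac1N\int_{Q_{T_0}}|\nabla u_N|^{20/9}\,dxdt\le CE_0$, the temperature bounds $\sup_t\int_\Omega\theta_N\,dx\le CE_0$ and $\int_{Q_{T_0}}|\nabla\theta_N|^q\,dxdt\le CE_0^q$ for $q\in(1,\tfrac43)$, the lower bound $\theta_N\ge\underline\theta_0$, and $|d_N|=1$. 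By Gagliardo--Nirenberg these yield $\int_{Q_{T_0}}(|u_N|^4+|\nabla d_N|^4+\theta_N^r)\,dxdt\le C(r,E_0)$ for $r\in(1,2)$. For the pressure, the elliptic problem $\Delta p_N=\mathrm{div}\,\mathrm{div}(S_N-\nabla d_N\odot\nabla d_N-u_N\otimes u_N)$ gives, using that $\tfrac1N|\nabla u_N|^{2/9}\nabla u_N\to0$ in $L^{20/11}(Q_{T_0})$ and $\mu(\theta_N)(\nabla u_N+\nabla u_N^T)$ is bounded in $L^2$, the bound $\|p_N\|_{L^2(Q_{T_0})}\le CE_0^{1/2}$; note that here, with $N\to\infty$, we no longer need the higher integrability that the $S_N$ term provided in the previous section, so $L^2$ for $p$ suffices.

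Next I would extract weakly/weakly-$*$ convergent subsequences: $u_N\rightharpoonup u$ in $L^2(0,T_0;H^1)\cap L^\infty(0,T_0;L^2)$, $d_N\rightharpoonup d$ in $L^2(0,T_0;H^2)$ with $\partial_t d_N\rightharpoonup\partial_t d$ in $L^2(Q_{T_0})$, $\theta_N\rightharpoonup\theta$ in $L^q(0,T_0;W^{1,q})$, $p_N\rightharpoonup p$ in $L^2(Q_{T_0})$, and $\tfrac1N|\nabla u_N|^{2/9}\nabla u_N\to0$ strongly in $L^{20/11}$. Then I would upgrade to strong convergence exactly as in the steps $n\to\infty$ and $M\to\infty$: using the momentum equation (\ref{T14.0-1}) to bound $\partial_t u_N$ in a negative Sobolev space in time and Lemma \ref{T1lem2.2}/\ref{T1lem2.3}, get $u_N\to u$ strongly in $L^q(Q_{T_0})$ for $q<\tfrac{40}{9}$ (in particular in $L^4$); using the $d$-equation and the $H^2$-bound plus Aubin--Lions, get $d_N\to d$ strongly in $L^q(0,T_0;W^{1,q})$ for $q<4$ and $\nabla d_N\to\nabla d$ a.e.; using $\partial_t\theta_N$ bounded in $L^1(0,T_0;W^{-1,q}_{\mathrm{per}})$, get $\theta_N\to\theta$ strongly in $L^r(Q_{T_0})$ for $r\in(1,2)$ and a.e. These convergences let me pass to the limit in every term of (\ref{T14.0-1})--(\ref{T14.0-5}), recovering $|\nabla d|^2d$ as the a.e. limit of $|\nabla d_N|^2 d_N$, and $|d|=1$ by the maximum principle for (\ref{T11.2}); the $\tfrac1N$-terms vanish. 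The entropy inequality passes to the limit just as in the proof of Proposition \ref{T1prop4.1}: mollify, multiply by $\alpha\theta_\varepsilon^{\alpha-1}$, use the commutator lemma of Lions, and keep the good sign of $\alpha(1-\alpha)\theta^{\alpha-2}|\nabla\theta|^2$ via weak lower semicontinuity of $\nabla\theta^{\alpha/2}$ in $L^2$.

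The main obstacle, and the one requiring real care, is the \emph{exact} energy identity
\[
\int_\Omega\Big(\tfrac{|u(t)|^2}{2}+\tfrac{|\nabla d(t)|^2}{2}+\theta(t)\Big)dx=\int_\Omega\Big(\tfrac{|u_0|^2}{2}+\tfrac{|\nabla d_0|^2}{2}+\theta_0\Big)dx,
\]
which demands that no energy be lost in the limit on $[0,T_0]$. The strategy is to first establish strong convergence $\nabla u_N\to\nabla u$ in $L^2(Q_{T_0})$ and $\Delta d_N\to\Delta d$ in $L^2(Q_{T_0})$. This is obtained by writing the basic energy law for $(u_N,d_N)$ — testing (\ref{T14.0-1}) by $u_N$ and (\ref{T14.0-3}) by $-\Delta d_N$, summing, and using $\mathrm{div}(\nabla d_N\odot\nabla d_N)=\nabla(\tfrac{|\nabla d_N|^2}{2})+\Delta d_N\cdot\nabla d_N$ so the cross terms cancel — and comparing with the corresponding law for the limit $(u,d)$; since $\int_{Q_t}|\nabla d_N|^2 d_N\cdot\Delta d_N\to\int_{Q_t}|\nabla d|^2d\cdot\Delta d$ (using $\nabla d_N\to\nabla d$ in $L^4$ and $\Delta d_N\rightharpoonup\Delta d$ in $L^2$) and $\tfrac1N\int|\nabla u_N|^{20/9}\to0$, one gets convergence of the dissipation integrals $\int_{Q_t}\big(\tfrac{\mu(\theta_N)}{2}|\nabla u_N+\nabla u_N^T|^2+|\Delta d_N|^2\big)dxds$ to the limit quantity, whence Lemma \ref{T1lem2.7} gives the desired strong convergence (here the a.e. convergence $\theta_N\to\theta$ is used to identify $\mu(\theta_N)\to\mu(\theta)$ a.e. together with (\ref{T11.10})). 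Granted this, $S_N:\nabla u_N+|\Delta d_N+|\nabla d_N|^2d_N|^2\to S:\nabla u+|\Delta d+|\nabla d|^2d|^2$ in $L^1(Q_{T_0})$, so integrating (\ref{T14.0-5}) over $\Omega$ and passing to the limit gives $\tfrac{d}{dt}\int_\Omega\theta\,dx=\int_\Omega(S:\nabla u+|\Delta d+|\nabla d|^2d|^2)\,dx$; adding this to the limiting $(u,d)$-energy law makes the dissipative and source terms cancel identically, yielding the total energy conservation for all $t\in[0,T_0]$. The continuity in time of each energy piece (needed to make sense of the identity pointwise in $t$, not just a.e.) follows from $u\in C_w([0,T_0];L^2)$, $d\in C([0,T_0];H^1)$, and $\theta\in C_w([0,T_0];L^1)$, which come from the uniform bounds and the equations.
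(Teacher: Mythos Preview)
Your overall strategy is correct and matches the paper's, but there are two genuine gaps.

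First, you apply Proposition~\ref{T1prop4.1} directly with the given initial director $d_0\in H^1(\Omega;S^2)$, but that proposition requires $d_0\in C^2(\overline\Omega)$ with $|d_0|=1$. The paper handles this by invoking Lemma~\ref{T1lem2.8} (Schoen--Uhlenbeck density of smooth $S^2$-valued maps in $H^1$) to produce a sequence $d_{0,N}\in C^\infty(\overline\Omega;S^2)$ with $d_{0,N}\to d_0$ in $H^1$, and then checks that the small-energy condition on balls still holds (with $2\varepsilon_0^2$) uniformly in $N$. This step is not optional: smooth approximation of manifold-valued $H^1$ maps is a nontrivial fact, and without it Proposition~\ref{T1prop4.1} is not applicable.

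Second, in your energy-comparison argument you assert $\nabla d_N\to\nabla d$ strongly in $L^4(Q_{T_0})$ in order to pass to the limit in $\int_{Q_t}|\nabla d_N|^2 d_N\cdot\Delta d_N$. But your own compactness step only gives $\nabla d_N\to\nabla d$ in $L^q$ for $q<4$; boundedness in $L^4$ plus a.e.\ convergence does not upgrade this to $L^4$. The paper avoids this difficulty by using the algebraic identity
\[
(\Delta d_N+|\nabla d_N|^2 d_N)\cdot\Delta d_N=|\Delta d_N+|\nabla d_N|^2 d_N|^2,
\]
valid because $|d_N|=1$, which rewrites the $(u_N,d_N)$ energy law with dissipation $|\Delta d_N+|\nabla d_N|^2 d_N|^2$ and \emph{no} right-hand side. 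Then weak $L^2$ convergence of $\Delta d_N+|\nabla d_N|^2 d_N$ (which you do have) plus lower semicontinuity is enough to close the Lemma~\ref{T1lem2.7} argument and obtain the strong convergences of $\nabla u_N$ and $\Delta d_N$ in $L^2$.

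A minor point: for the entropy inequality you propose to mollify the limiting $\theta$-equation and reuse the Lions commutator lemma. In the limit you only have $u\in L^2_tH^1_x$ and $\theta\in L^r$ for $r<2$, so the exponents no longer satisfy the hypothesis used in the proof of Proposition~\ref{T1prop4.1}. The paper instead passes to the limit directly in the $N$-level entropy inequality using weak lower semicontinuity of the quadratic terms, which requires no commutator estimate.
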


\begin{proof}
Since the torus $\mathbb T^2$ can be viewed as a two dimensional compact surface without boundary. By Lemma \ref{T1lem2.8}, there is a sequence of $2\pi$ periodic functions $\{d_{0,N}\}\subseteq C^\infty(\overline\Omega; S^2)$, such that
$$
d_{0,N}\rightarrow d_0,\quad\mbox{ strongly in }H^1(\Omega).
$$
Due to the absolute continuity of integrals, for $\varepsilon_0$ there is a $R_0$ independent of $N$, such that
$$
\sup_{x\in\overline\Omega}\int_{B_{2R_0}(x)}(|u_0|^2+|\nabla d_{0,N}|^2)dy\leq2\varepsilon_0^2,\quad\mbox{ for any }N.
$$
By Proposition \ref{T1lem4.1}, there is a weak solution $\{(u_N, d_N, \theta_N, p_N)\}$ on $Q_{T_0}$ to the system (\ref{T14.0-1})--(\ref{T14.0-5})
with initial data $(u_0, d_{0,N},\theta_0)$, such that $\inf_{(x,t)\in Q_{T_0}}\theta_N\geq\underline\theta_0$,
\begin{align}
&\sup_{0\leq t\leq T_0}\int_\Omega(|u_N|^2+|\nabla d_N|^2)dx+\int_{Q_{T_0}}(|\nabla u_N|^2+\frac{1}{N}|\nabla u_N|^{\frac{20}{9}}\nonumber\\
&\qquad \qquad+|\Delta d_N|^2+|\partial_td_N|^2+|p_N|^{\frac{20}{11}})dxdt\leq CE_0,\label{T15.2}\\
&\sup_{0\leq t\leq T_0}\int_\Omega\theta_N(t)dx\leq CE_0,\quad\int_{Q_{T_0}}|\nabla\theta_N|^qdxdt\leq CE_0^q,\quad q\in(1, \frac{4}{3}).\label{T15.4}
\end{align}

Note that (\ref{T15.2}) implies
$$
\frac{1}{N}|\nabla u_N|^{\frac{2}{9}}\nabla u_N\rightarrow0\quad\mbox{ in }L^{\frac{20}{11}}(Q_{T_0}).
$$
Using (\ref{T15.2})--(\ref{T15.4}) and (\ref{T14.0-1})--(\ref{T14.0-3}), one can argue as in the proof of Proposition
\ref{T1prop4.1} to show that
\begin{eqnarray}
&&u_N\rightarrow u\quad\mbox{ strongly in }L^q(Q_{T_0}),\quad q\in(1, 4),\label{T15.7}\\
&&d_N\rightarrow d\quad\mbox{ strongly in }L^q(0, T_0; W^{1,q}(\Omega)),\quad q\in(1,4),\label{T15.8}\\
&&\theta_N\rightarrow\theta\quad\mbox{ strongly in }L^r(Q_{T_0}),\quad r\in(1,2),\label{T15.9}
\end{eqnarray}
and $(u, d,\theta, p)$ satisfies $\inf_{(x,t)\in Q_{T_0}}\theta\geq\underline\theta_0$,
\begin{align}
&u_t+(u\cdot\nabla)u+\nabla p=\textmd{div}(\mu(\theta)(\nabla u+\nabla u^T)-\nabla d\odot\nabla d),\quad\textmd{div}u=0,\label{T1C.1}\\
&d_t+(u\cdot\nabla)d=\Delta d+|\nabla d|^2d,\qquad|d|=1.\label{T1C.2}
\end{align}
Besides, inequality (\ref{T15.4}) also holds true for $\theta$, while (\ref{T15.2}) is replaced by
\begin{equation*}
\sup_{0\leq t\leq T_0}\int_\Omega(|u|^2+|\nabla d|^2)dx+\int_{Q_{T_0}}(|\nabla u|^2
+|\Delta d|^2+|\partial_td|^2+|p|^2)dxdt\leq CE_0.\label{T15.10}
\end{equation*}

Testing (\ref{T14.0-1}) by $u_N$ and (\ref{T14.0-3}) by $-\Delta d_N$, respectively, summing up the resulting identities, and using the identity
\begin{equation*}
(\Delta d_n+|\nabla d_N|^2d_N)\cdot\Delta d_N=|\Delta d_N+|\nabla d_N|^2d_N|^2
\end{equation*}
which follows from $|d_N|=1$, one gets
\begin{align*}
\int_\Omega&\left(\frac{|u_0|^2}{2}+\frac{|\nabla d_{0,N}|^2}{2}\right)dx=\int_\Omega\left(\frac{|u_N(t)|^2}{2}+\frac{|\nabla d_N(t)|^2}{2}\right)dx\nonumber\\
&+\int_0^t\int_\Omega\left(\frac{\mu(\theta_N)}{2}|\nabla u_N+\nabla u_N^T|^2+\frac{|\nabla u_N|^{\frac{20}{9}}}{N}+|\Delta d_N+|\nabla d_N|^2d_N|^2\right)dxds, \end{align*}
for any $t\in[0, T_0]$. Similarly, one deduces from equations (\ref{T1C.1}) and (\ref{T1C.2}) that
\begin{align}
\int_\Omega&\left(\frac{|u_0|^2}{2}+\frac{|\nabla d_{0}|^2}{2}\right)dx=\int_\Omega\left(\frac{|u(t)|^2}{2}+\frac{|\nabla d(t)|^2}{2}\right)dx\nonumber\\
&+\int_0^t\int_\Omega\left(\frac{\mu(\theta)}{2}|\nabla u+\nabla u^T|^2+|\Delta d+|\nabla d|^2d|^2\right)dxds,\label{B.3}
\end{align}
for any $t\in[0, T_0]$. Combining these two identities with (\ref{T15.7}) and (\ref{T15.8}), we can apply Lemma \ref{T1lem2.7} to infer that
$$
\nabla u_N\rightarrow\nabla u\quad\mbox{and}\quad\Delta d_N\rightarrow\Delta d,\quad\mbox{strongly in }L^2(Q_{T_0}).
$$
Therefore, one can take the limit $N\rightarrow\infty$ to conclude that
$$
\theta_t+\text{div}(u\theta)=\frac{\mu(\theta)}{2}|\nabla u+\nabla u^T|^2+|\Delta d+|\nabla d|^2d|^2\quad\mbox{ in }\mathcal D'(Q_{T_0}),
$$
from which, by a standard regularization argument, one deduces
$$
\int_\Omega\theta(t)dx+\int_0^t\int_\Omega\left(\frac{\mu(\theta)}{2}|\nabla u+\nabla u^T|^2+|\Delta d+|\nabla d|^2d|^2\right)dxds=\int_\Omega\theta_0dx,
$$
for any $t\in[0, T_0]$. This, combined with (\ref{B.3}), gives
$$
\int_\Omega\left(\frac{|u(t)|^2}{2}+\frac{|\nabla d(t)|^2}{2}+\theta(t)\right)dx=\int_\Omega\left(\frac{|u_0|^2}{2}+\frac{|\nabla d_0|^2}{2}+\theta_0\right)dx,
$$
for any $t\in[0, T_0]$.

Finally, we verify the entropy inequality for the temperature. For arbitrary test function $0\leq\varphi\in C_0^\infty(Q_{T_0})$, by Proposition \ref{T1prop4.1},
it holds that
\begin{align}
&\int_0^{T_0}\int_\Omega(\theta_N^\alpha\varphi_t+u\theta_N^\alpha\nabla\varphi)dxdt\nonumber\\
\leq&\int_{Q_{T_0}}\nabla\theta_N^\alpha\nabla\varphi dxdt-\int_{Q_{T_0}}\Big[\alpha\theta_N^{\alpha-1}\Big(\frac{\mu(\theta_N)}{2}|\nabla u_N+\nabla u_N^T|^2+\frac{1}{N}|\nabla u|^{\frac{20}{9}}\nonumber\\
&+|\Delta d_N+|\nabla d_N|^2d_N|^2\Big)+\alpha(1-\alpha)\theta_N^{\alpha-2}|\nabla\theta_N|^2\Big]\varphi dxdt.\label{T1B.1}
\end{align}
Note that
\begin{eqnarray*}
&\vspace{-5mm}\sqrt{\theta_N^{\alpha-2}\mu(\theta_N)}(\nabla u_N+\nabla u_N^T)\rightarrow\sqrt{\theta^{\alpha-2}\mu(\theta)}(\nabla u+\nabla u^T),\quad\mbox{ weakly in }L^2(Q_{T_0}),\\
&\vspace{-5mm}\sqrt{\theta_N^{\alpha-1}}(\Delta d_N+|\nabla d_N|^2d_N)\rightarrow\sqrt{\theta^{\alpha-1}}(\Delta d+|\nabla d|^2d),\mbox{ weakly in }L^2(Q_{T_0}),\\
&\vspace{-5mm}\sqrt{\theta_N^{\alpha-1}}\nabla\theta_N\rightarrow\sqrt{\theta^{\alpha-1}}\nabla\theta,\quad\mbox{ weakly in }L^2(Q_{T_0}).
\end{eqnarray*}
It follows from the weakly lower semicontinuity of norms and (\ref{T1B.1}) that
\begin{align*}
&\int_0^{T_0}\int_\Omega(\theta^\alpha\varphi_t+u\theta^\alpha\nabla\varphi)dxdt\nonumber\\
\leq&\int_0^{T_0}\int_\Omega\nabla\theta^\alpha\nabla\varphi dxdt
-\liminf_{N\rightarrow\infty}\int_0^{T_0}\int_\Omega\left[\alpha\theta_N^{\alpha-1}\left(\frac{\mu(\theta_N)}{2}|\nabla u_N+\nabla u_N^T|^2\right.\right.\nonumber\\
&\left.\left.+\frac{1}{N}|\nabla u|^{\frac{20}{9}}+|\Delta d_N+|\nabla d_N|^2d_N|^2\right)+\alpha(1-\alpha)\theta_N^{\alpha-2}|\nabla\theta_N|^2\right]\varphi dxdt\\
\leq&\int_0^{T_0}\int_\Omega\nabla\theta^\alpha\nabla\varphi dxdt
-\int_\Omega[\alpha\theta^{\alpha-1}(S:\nabla u+|\Delta d+|\nabla d|^2d|^2\\
&+\alpha(1-\alpha)\theta^{\alpha-2}|\nabla\theta|^2]\varphi dxdt,
\end{align*}
which shows that the entropy inequality holds true in the sense of distribution.
The proof is completed.
\end{proof}

\section{The global existence}\label{T1sec7}

In this section, we will show the global existence of weak solution to the system (\ref{T11.1})--(\ref{T11.5}) and thus
establish Theorem \ref{T1them1.2}.

\begin{lemma}\label{T1lem7.1}
Let $0=T_0<T_1<T_2<\cdots<T_N<T_{N+1}=\infty$ and the $2\pi$ periodic functions $u, d$ and $\theta$ satisfy
\begin{eqnarray*}
&&u\in L^\infty(0, T; L^2)\cap L^2(0, T; H^1(\Omega))\cap C([0, T]; L^2_w),\\
&&d\in L^\infty(0, T; H^1)\cap L^2(T_i, T_{i+1}-\delta; H^2)\cap L^2(T_N, T; H^2)\cap C([0, T]; H^1_w),\\
&&\theta\in L^\infty(0, T; L^1)\cap L^q(0, T; W^{1,q})\cap C([0,T]; W^{-1,q}_{per}),\quad q\in(1, \frac{4}{3}),
\end{eqnarray*}
for any small $\delta>0$ and $T>T_N$, $0\leq i\leq N-1$. Suppose that $(u, d, \theta)$ satisfies the system (\ref{T11.1})--(\ref{T11.3}) and the entropy inequality \begin{equation}\label{T18.0}
\partial_t\theta^\alpha+\textmd{div}(u\theta^\alpha)\geq\Delta\theta^\alpha+\alpha\theta^{\alpha-1}(S:\nabla u+|\Delta d+|\nabla d|^2d|^2)+\alpha(1-\alpha)\theta^{\alpha-2}|\nabla\theta|^2
\end{equation}
for any $\alpha\in(0, 1)$, in the sense of $\mathcal D'(\Omega\times(T_i, T_{i+1})$, $0\leq i\leq N$.

Then $(u, d, \theta)$ satisfies the system (\ref{T11.1})--(\ref{T11.3}) and the entropy inequality in the sense of $\mathcal D'(Q_T)$.
\end{lemma}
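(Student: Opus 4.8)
The plan is to localize in time and patch across the singular times. It suffices to upgrade each weak relation from the class of test functions supported in a single slab $\Omega\times(T_i,T_{i+1})$ to the class of test functions supported in $\Omega\times(0,T)$; the endpoints $t=0$ and $t=T$ need no attention since they already lie in the first and last slabs. So I would fix a test function $\varphi$ (divergence free for the momentum equation, nonnegative for the entropy inequality), pick cut-offs $\zeta_\varepsilon\in C^\infty([0,T])$ with $0\le\zeta_\varepsilon\le1$, $\zeta_\varepsilon\equiv1$ outside the $\varepsilon$-neighbourhood of $\{T_1,\dots,T_N\}$, $\zeta_\varepsilon\equiv0$ on the $\tfrac\varepsilon2$-neighbourhood, and $|\zeta_\varepsilon'|\le C\varepsilon^{-1}$, and apply the assumed $\mathcal D'(\Omega\times(T_i,T_{i+1}))$-relations to $\varphi\zeta_\varepsilon$ on each slab. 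Summing over $i=0,\dots,N$ (the $T_i$ are negligible) produces the global relation with $\varphi$ replaced by $\varphi\zeta_\varepsilon$, and one then lets $\varepsilon\to0$.

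For the director equation $(\ref{T11.2})$, which is assumed to hold a.e.\ on each slab, this gives nothing new — it holds a.e.\ on $Q_T$. For $(\ref{T11.1})$ and $(\ref{T11.3})$ the limiting step is routine. Every nonlinear quantity entering the weak formulations — $\mu(\theta)(\nabla u+\nabla u^T)$, $\nabla d\odot\nabla d$, $u\otimes u$, $|\Delta d+|\nabla d|^2d|^2$, $u\cdot\nabla\theta=\textmd{div}(u\theta)$, $\nabla\theta$ — lies in $L^1(Q_T)$ or better: this combines the assumed regularity with Lemma $\ref{T1lem2.1}$, which upgrades $u\in L^\infty(0,T;L^2)\cap L^2(0,T;H^1)$ and $\nabla d\in L^\infty(0,T;L^2)\cap L^2_{\textmd{loc}}(0,T;H^2)$ to $u,\nabla d\in L^4(Q_T)$. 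Since $\partial_t(\varphi\zeta_\varepsilon)=\varphi_t\zeta_\varepsilon+\varphi\zeta_\varepsilon'$ while the spatial derivatives only carry the bounded factor $\zeta_\varepsilon$, dominated convergence disposes of all $\zeta_\varepsilon$-weighted terms, so the only genuinely new term is $\int_{Q_T}(\text{state variable})\,\varphi\,\zeta_\varepsilon'\,dxdt$. Because $u\in C([0,T];L^2_w)$, $d\in C([0,T];H^1_w)$ and $\theta\in C([0,T];W^{-1,q}_{\textmd{per}})$, the maps $t\mapsto\langle u(t),\varphi(t)\rangle$, $t\mapsto\langle\nabla d(t),\nabla\varphi(t)\rangle$ and $t\mapsto\langle\theta(t),\varphi(t)\rangle$ are continuous on $[0,T]$, and $\zeta_\varepsilon'$ has zero net mass, $\zeta_\varepsilon(T_i+\varepsilon)-\zeta_\varepsilon(T_i-\varepsilon)=0$, on each neighbourhood of $T_i$; hence the new term tends to $0$ and $(\ref{T11.1})$, $(\ref{T11.3})$ hold in $\mathcal D'(Q_T)$.

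The hard part will be the entropy inequality $(\ref{T18.0})$, precisely because it is one-sided: the leftover $\varphi(1-\zeta_\varepsilon)$ straddles the $T_i$ and is not admissible slab by slab, so one cannot discard the term $-\int_{Q_T}\theta^\alpha\varphi\,\zeta_\varepsilon'\,dxdt$. Using the slab inequality together with $(\ref{T11.1})$--$(\ref{T11.3})$ on $(T_i,T_{i+1})$, the function $g(t):=\int_\Omega\theta^\alpha(t)\varphi(t)\,dx$ is seen to admit one-sided limits $g(T_i^\pm)$ (these can be read off from the slab relations), and $\int_{Q_T}\theta^\alpha\varphi\,\zeta_\varepsilon'\,dxdt\to\sum_{i=1}^N\big(g(T_i^+)-g(T_i^-)\big)$ as $\varepsilon\to0$. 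Thus, taking the limit in the sum of the slab inequalities shows that the distribution appearing in $(\ref{T18.0})$, tested against any nonnegative $\varphi$, is bounded below by $\sum_{i=1}^N\big(g(T_i^+)-g(T_i^-)\big)$; the inequality $(\ref{T18.0})$ therefore follows once one establishes $g(T_i^+)\ge g(T_i^-)$ for all $i$ and all nonnegative $\varphi$, equivalently $\theta(T_i^+)\ge\theta(T_i^-)$ a.e.\ in $\Omega$ — that the temperature cannot drop across a singular time. This is where the structure of the problem enters: on the two slabs meeting at $T_i$ the total energy $\int_\Omega(\tfrac12|u|^2+\tfrac12|\nabla d|^2+\theta)\,dx$ is conserved, while weak lower semicontinuity along $u(t)\rightharpoonup u(T_i)$, $\nabla d(t)\rightharpoonup\nabla d(T_i)$ forces $\int_\Omega(|u(t)|^2+|\nabla d(t)|^2)\,dx$ to be left-continuous and to jump down at $T_i$; since the heat source $S:\nabla u+|\Delta d+|\nabla d|^2d|^2$ is nonnegative and $\theta$ solves $(\ref{T11.3})$ on each slab, the kinetic and potential energy shed at $T_i$ is transferred into $\theta$, which yields the required sign. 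I expect this jump analysis at the singular times, rather than the cut-and-paste, to be the real obstacle.
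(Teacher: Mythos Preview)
Your treatment of the momentum, director, and temperature equations matches the paper's proof exactly: introduce a temporal cut-off localized near each $T_i$, use the slab relations on the complementary piece, and kill the remaining $\zeta_\varepsilon'$-term via the assumed weak continuities $u\in C([0,T];L^2_w)$, $d\in C([0,T];H^1_w)$, $\theta\in C([0,T];W^{-1,q}_{\mathrm{per}})$. The paper is terser --- it works through the momentum equation in detail and then dispatches (\ref{T11.2}), (\ref{T11.3}) and the entropy inequality with the phrase ``similar arguments'' --- but the mechanism is identical.

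Where you diverge is in the entropy inequality, and here your diagnosis is more careful than the paper's but your proposed fix has a gap. You are right that the one-sidedness prevents a direct cut-and-paste: the map $t\mapsto g(t)=\int_\Omega\theta^\alpha\varphi\,dx$ is not obviously continuous at $T_i$, since $\theta\in C([0,T];W^{-1,q}_{\mathrm{per}})$ says nothing about $\theta^\alpha$. But your plan to establish the pointwise inequality $\theta(T_i^+)\ge\theta(T_i^-)$ a.e.\ from energy conservation does not work as stated. Total energy conservation only controls $\int_\Omega\theta\,dx$, not $\theta$ pointwise; a jump up in $\int_\Omega\theta$ is perfectly compatible with $\theta$ dropping on part of $\Omega$. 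Worse, the hypothesis $\theta\in C([0,T];W^{-1,q}_{\mathrm{per}})$ already forces $\theta(T_i^+)=\theta(T_i^-)$ as distributions, so if one-sided $L^1$-limits exist they coincide --- there is no jump in $\theta$ to exploit, and the question is really about $\theta^\alpha$.

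A much cleaner route avoids the jump analysis entirely. Once you have patched (\ref{T11.3}) across the $T_i$ (which you do correctly, using $\theta\in C([0,T];W^{-1,q}_{\mathrm{per}})$), the entropy inequality on all of $Q_T$ follows by rerunning the regularization argument from the end of the proof of Proposition~\ref{T1prop4.1}: mollify $\theta$, use the commutator lemma, multiply by $\alpha\theta_\varepsilon^{\alpha-1}$, and pass to the limit using weak lower semicontinuity of $\int\theta_\varepsilon^{\alpha-2}|\nabla\theta_\varepsilon|^2\varphi$. The required ingredients --- $u\in L^2(0,T;H^1)$, $\theta\in L^r(Q_T)$ for some $r>1$, $\theta$ bounded below --- are available globally. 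This is presumably what the paper's ``similar arguments'' gestures at, though it does not say so.
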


\begin{proof}
Let $\varphi\in C_0^\infty(\Omega\times(0, T_2))$ with $\textmd{div}\varphi=0$ be any given test function, and $\chi(s)\in C_0^\infty((-1,1))$ such that $\chi\equiv1$ on $(-1/2, 1/2)$. For any $\varepsilon>0$, define $\chi_\varepsilon(s)=\chi\left(\frac{s-T_1}{\varepsilon}\right)$.  Then $\chi_\varepsilon\in C_0^\infty((T_1-\varepsilon, T_1+\varepsilon))$ and $\chi_\varepsilon\equiv1$ on $(T_1-\varepsilon/2, T_1+\varepsilon/2)$. Recalling that $(u, d, \theta)$ is a weak solution in the time interval $(0, T_1)\cup(T_1, T_2)$, we can deduce that
\begin{align*}
&\int_0^{T_2}\int_\Omega(u\varphi_t+u\otimes u:\nabla\varphi-(S+\sigma^{\text{nd}}):\nabla\varphi)dxdt\\
=&\int_0^{T_2}\int_\Omega(u(\varphi\chi_\varepsilon)_t+u\otimes u:\nabla(\varphi\chi_\varepsilon)-(S+\sigma^{\text{nd}}):\nabla(\varphi\chi_\varepsilon))dxdt\\
=&\int_0^{T_2}\int_\Omega(u\varphi_t+u\otimes u:\nabla\varphi-(S+\sigma^{\text{nd}}):\nabla\varphi)\chi_\varepsilon dxdt\\
&+\int_0^{T_2}\int_\Omega u\varphi\chi_\varepsilon'(t)dxdt=I_1+I_2.
\end{align*}
It follows from the definition of $\chi_\varepsilon$ and the regularities of $(u, d)$ that
\begin{align*}
|I_1|=&\left|\int_{T_1-\varepsilon}^{T_1+\varepsilon}\int_\Omega(u\varphi_t+u\otimes u:\nabla\varphi-(S+\sigma^{\text{nd}}):\nabla\varphi)\chi_\varepsilon dxdt\right|\\
\leq&C\int_{T_1-\varepsilon}^{T_1+\varepsilon}\int_\Omega(|u|+|u|^2+|\nabla u|+|\nabla d|^2)dxdt\rightarrow0,\quad\mbox{ as }\varepsilon\rightarrow0.
\end{align*}
For $I_2$, one has
\begin{align*}
I_2=\int_{T_1-\varepsilon}^{T_1+\varepsilon}\int_\Omega u\varphi\frac{1}{\varepsilon}\chi'\left(\frac{t-T_1}{\varepsilon}\right)dxdt=\int_{T_1-\varepsilon}^{T_1+\varepsilon}f(t)\frac{1}{\varepsilon}\chi'
\left(\frac{t-T_1}{\varepsilon}\right)dxdt,
\end{align*}
where
$$
f(t)=\int_\Omega u\varphi dx.
$$
Since $u\in C([0, T]; L^2_w)$, thus $f\in C([0, T])$. It follows that
\begin{align*}
|I_2|=&\left|\int_{T_1-\varepsilon}^{T_1+\varepsilon}f(t)\frac{1}{\varepsilon}\chi'
\left(\frac{t-T_1}{\varepsilon}\right)dt\right|\\
=&\left|\int_{T_1-\varepsilon}^{T_1+\varepsilon}(f(t)-f(T_1))\frac{1}{\varepsilon}\chi'
\left(\frac{t-T_1}{\varepsilon}\right)dt\right|\\
\leq&C\max_{t\in[T_1-\varepsilon, T_1+\varepsilon]}|f(t)-f(T_1)|\rightarrow0,\quad\mbox{ as }\varepsilon\rightarrow0,
\end{align*}
since $f(t)$ is continuous on $[0, T]$. Combining the above statements shows that
\begin{equation*}\label{T17.1}
\int_0^{T_2}\int_\Omega(u\varphi_t+u\otimes u:\nabla\varphi-(S+\sigma^{\text{nd}}):\nabla\varphi)dxdt=0
\end{equation*}
for any function $\varphi\in C_0^\infty(\Omega\times(0, T_2))$ with $\text{div}\varphi=0$. Similarly, one can show
 that the above identity holds true for any $\varphi\in C_0^\infty(\Omega\times(0, T))$ with $\textmd{div}\varphi=0$.
 Similar arguments as above show that
$(u, d, \theta)$ satisfies (\ref{T11.2}), (\ref{T11.3}) and the entropy inequality (\ref{T18.0}) in the sense of $\mathcal D'(Q_T)$. The proof is completed.
\end{proof}

\textbf{Proof of Theorem \ref{T1them1.2}.} The proof is divided into several steps as follows.

\textbf{Step 1. Extend  the local solution to the first "singular" time.} Set
$$
\mathcal E_0=\mathcal E_{0,1}+\int_\Omega\theta_0dx,\quad\mathcal E_{0,1}=\int_\Omega\left(\frac{|u_0|^2}{2}+\frac{|\nabla d_0|^2}{2}\right)dx.
$$
Let $\varepsilon_0$ and $\tau_0$ be the constants stated in Proposition \ref{T1prop5.1}. By the absolute continuity of integrals, we can define $R_0$ as
$$
R_0=\sup\left\{R\left|\sup_{x\in\Omega}\int_{B_{2R}(x)}(|u_0|^2+|\nabla d_0|^2)dy\leq\varepsilon_0^2, R\in(0, 1]\right.\right\}.
$$
Then one can verify easily, by the definition of $R_0$, that
$$
\sup_{x\in\Omega}\int_{B_{2R_0}(x)}(|u_0|^2+|\nabla d_0|^2)dy\leq\varepsilon_0^2.
$$

By Proposition \ref{T1prop5.1}, there is a weak solution $(u, d, \theta, p)$ in $Q_{t_1}=\Omega\times(0,\tau_0R_0^3)$ to the system (\ref{T11.1})--(\ref{T11.5}), satisfying
\begin{equation}\label{T16.1}
\int_\Omega\left(\frac{|u(t)|^2}{2}+\frac{|\nabla d(t)|^2}{2}+\theta(t)\right)dx=\mathcal E_0
\end{equation}
for $t\in(0, t_1)$,
\begin{eqnarray*}
&&u\in L^\infty(0, t_1; L^2)\cap L^2(0, t_1; H^1),\quad p\in L^2(Q_{t_1}),\\
&&d\in L^\infty(0, t_1; H^1)\cap L^2(0, t_1; H^2),\\
&&\theta\in L^\infty(0, t_1; L^1)\cap L^q(0, t_1; W^{1,q}),\quad q\in(1,\frac{4}{3}),\\
&&\inf_{(x, t)\in Q_{t_1}}\theta\geq\underline\theta_0,
\end{eqnarray*}
and the following entropy inequality holds true
\begin{equation}\label{T16.1-1}
\partial_t\theta^\alpha+\textmd{div}(u\theta^\alpha)\geq\Delta \theta^\alpha+\alpha\theta^{\alpha-1}(S:\nabla u+|\Delta d+|\nabla d|^2d|^2)+\alpha(1-\alpha)\theta^{\alpha-2}|\nabla\theta|^2
\end{equation}
in $\mathcal D'(\Omega\times(0, t_1))$.

 It follows from (\ref{T11.1}), (\ref{T11.2}),(\ref{T11.3}), and  the above regularities of $(u, d, \theta, p)$ that
$$
u\in C([0, t_1]; L^2),\quad d\in C([0, t_1]; H^1),\quad\theta\in C([0, t_1]; W^{-1,q}_{per}(\Omega)),\quad q\in(1, \frac{4}{3}),
$$
and thus $(u, d, \theta)$ can be uniquely defined at $t=t_1$.

We extend $(u, d, \theta)$ to another time $t_2>t_1$ as follows. Define
$$
R_1=\sup\left\{R\left|\sup_{x\in\Omega}\int_{B_{2R}(x)}(|u(t_1)|^2+|\nabla d(t_1)|^2)dy\leq\varepsilon_0^2, R\in(0, 1]\right.\right\},
$$
and thus, by the definition of $R_1$, it holds that
$$
\sup_{x\in\Omega}\int_{B_{2R_1}(x)}(|u(t_1)|^2+|\nabla d(t_1)|^2)dy\leq\varepsilon_0^2.
$$
Recalling (\ref{T16.1}), by Proposition \ref{T1prop5.1}, we can extend $(u, d, \theta)$ to the time
$$
t_2=\tau_0(R_0^3+R_1^3),
$$
such that
\begin{eqnarray*}
&&u\in L^\infty(t_1, t_2; L^2)\cap L^2(t_1, t_2; H^1)\cap C([t_1, t_2]; L^2),\quad p\in L^2(\Omega\times(t_1, t_2),\\
&&d\in L^\infty(t_1, t_2; H^1)\cap L^2(t_1, t_2; H^2)\cap C([t_1, t_2]; H^1),\\
&&\theta\in L^\infty(t_1, t_2; L^1)\cap L^q(t_1, t_1; W^{1,q})\cap C([t_1, t_2]; W^{-1,q}_{\text{per}}),\quad q\in(1,\frac{4}{3}),\\
&&\inf_{(x, t)\in \Omega\times[t_1, t_2]}\theta\geq\underline\theta_0,
\end{eqnarray*}
the energy identity (\ref{T16.1}) holds true for $t\in[0, t_2)$, and the entropy inequality (\ref{T16.1-1}) holds true in $\mathcal D'(\Omega\times(t_1, t_2))$. Applying Lemma \ref{T1lem7.1}, we infer that $(u, d, \theta)$ is a weak solution in $Q_{t_2}$, and the entropy inequality (\ref{T16.1-1}) holds true in $\mathcal D'(Q_{t_2})$.

Repeat the above procedure, we obtain two sequences $\{R_i\}_{i=0}^\infty$ and $\{t_i\}_{i=0}^\infty$ with $t_0=0$
$$
t_i=\tau_0(R_0^3+R_1^3+\cdots+R_i^3),\quad i=1,2,\cdots,
$$
such that $(u, d, \theta, p)$ can be extended to be a weak solution in $Q_{t_i}$ with
\begin{eqnarray*}
&&u\in L^\infty(0, t_i; L^2)\cap L^2(0, t_i; H^1)\cap C([0, t_i]; L^2),\quad p\in L^2(Q_{t_i}),\\
&&d\in L^\infty(0, t_i; H^1)\cap L^2(0, t_i; H^2)\cap C([0, t_i]; H^1),\\
&&\theta\in L^\infty(0, t_i; L^1)\cap L^q(0, t_i; W^{1,q})\cap C([0, t_i]; W^{-1,q}_{\text{per}}),\quad q\in(1,\frac{4}{3}),\\
&&\inf_{(x, t)\in Q_{t_i}}\theta\geq\underline\theta_0,
\end{eqnarray*}
the energy identity (\ref{T16.1}) holds true for $t\in[0, t_i)$, the entropy inequality (\ref{T16.1-1}) holds true in $\mathcal D'(\Omega\times(0, t_i))$, and
$$
R_i=\sup\left\{R\in(0,1]\left|\sup_{x\in\Omega}\int_{B_{2R}(x)}(|u(t_i)|^2+|\nabla d(t_i)|^2)dy\leq\varepsilon_0^2\right.\right\}.
$$

Set
$$
T_1=\sum_{i=0}^\infty\tau_0R_i^3.
$$
Note that
$$
u\in L^\infty(0, T; L^2)\cap L^2(0, T; H^1),\quad d\in L^\infty(0, T; H^1)\cap L^2(0, T; H^2)
$$
for any $T<T_1$. Testing (\ref{T11.1}) by $u$, multiplying equation (\ref{T11.2}) by $-\Delta d$ and integrating over $\Omega$, then summing up the resulting identities,  one gets
\begin{align}
\int_\Omega(|u(t)|^2+|\nabla d(t)|^2)dx+&2\int_s^t\int_\Omega(S:\nabla u+|\Delta d+|\nabla d|^2d|^2)dxd\tau\nonumber\\
=&\int_\Omega(|u(s)|^2+|\nabla d(s)|^2)dx\label{T16.4}
\end{align}
for all $0\leq s\leq t<T_1$. Note that the entropy inequality (\ref{T16.1-1}) holds true in $\mathcal D'(\Omega\times(0, T_1))$. Thus,  by a standard regularization argument, it follows from (\ref{T16.1-1}) and (\ref{T16.4}) that
$$
\int_0^{T_1}\int_\Omega|\nabla\theta^{\frac{\alpha}{2}}|^2dxdt\leq C\mathcal E_0^\alpha,\qquad \alpha\in(0,1),
$$
from which, using (\ref{T16.1}) and the fact that $\inf_{(x,t)\in Q_{T_1}}\theta\geq\underline\theta_0>0$, similar to (\ref{T13.8-1}), one can deduce
\begin{equation}\label{T16.4-1}
\int_0^{T_1}\int_\Omega|\nabla\theta|^qdxdt\leq C\mathcal E_0^q,\qquad q\in(1, \frac{4}{3}).
\end{equation}

If $T_1=\infty$, then we have obtained a weak solution $(u, d, \theta, p)$ defined for all finite time. Suppose that $T_1<\infty,$ then $\lim_{i\rightarrow\infty}R_i=0$. Take an integer $i_0$ such that $R_i\leq\frac{1}{2}$ for $i\geq i_0$. Then for each $i\geq i_0$, we can pick $x_i\in\bar\Omega$, such that
\begin{equation}\label{T16.2}
\int_{B_{2R_i}(x_i)}(|u(t_i)|^2+|\nabla d(t_i)|^2)dy\geq\frac{\varepsilon_0^2}{2}.
\end{equation}
In fact, suppose that this is not true, then there is $i'\geq i_0$, such that
$$
\sup_{x\in\Omega}\int_{B_{2R_{i'}}(x)}(|u(t_{i'})|^2+|\nabla d(t_{i'})|^2)dy\leq\frac{\varepsilon_0^2}{2}.
$$
Due to this, by the absolute continuity of integrals, we can find $R'>R_{i'}$, such that
$$
\sup_{x\in\Omega}\int_{B_{2R'}(x)}(|u(t_{i'})|^2+|\nabla d(t_{i'})|^2)dy\leq{\varepsilon_0^2},
$$
contradicting to the definition of $R_{i'}$, and hence (\ref{T16.2}) holds true. Let $x_0\in\overline \Omega$ be the limit of $\{x_i\}$ (possibly by taking subsequence). Then for any $r>0$, it holds that
\begin{equation}\label{T16.3}
B_{2R_i}(x_i)\subseteq B_r(x_0)\quad\mbox{ for large }i.
\end{equation}
By the aid of (\ref{T16.2}) and (\ref{T16.3}), for any given $r>0$, it holds that
\begin{align}
&\sup_{(x, t)\in\overline{Q_{T_1}}}\int_{B_r(x)}(|u(t)|^2+|\nabla d(t)|^2)dy\nonumber\\
\geq&\limsup_{i\rightarrow\infty}\int_{B_{2R_i}(x_i)}(|u(t_i)|^2+|\nabla d(t_i)|^2)dy\geq\frac{\varepsilon_0^2}{2}.\label{C.1}
\end{align}

Note that (\ref{T11.1}) can be rewritten as
$$
u_t+\textmd{div}(u\otimes u)+\nabla\left(p+\frac{|\nabla d|^2}{2}\right)=\textmd{div}S-(\Delta d+|\nabla d|^2d)\cdot\nabla d,
$$
from which, using (\ref{T16.4}), one can deduce easily that $u\in C([0, T_1]; L^2_w(\Omega))$. It follows from (\ref{T11.2}), (\ref{T11.3}), and (\ref{T16.4}) that $d\in C([0, T_1]; H^1_w(\Omega))$ and $\theta\in C([0, T_1]; W^{-1, q}_{\text{per}}(\Omega))$ for $q\in(1, \frac{4}{3})$. Therefore, we can define uniquely the value of $(u, d, \theta)$ as the limit of $(u(t), d(t), \theta(t))$ at $T_1$ as $t\rightarrow T_1^{-}$.
By the aid of (\ref{T16.2}) and (\ref{T16.3}), it follows from weakly lower semi-continuity of norms that
\begin{align*}
\tilde{\mathcal E_1}(T_1):=&\int_{\tilde\Omega}(|u(T_1)|^2+|\nabla d(T_1)|^2)dx\\
=&\lim_{r\rightarrow0}\int_{\tilde\Omega\setminus B_r(x_0)}(|u(T_1)|^2+|\nabla d(T_1)|^2)dx\\
\leq&\lim_{r\rightarrow0}\liminf_{i\rightarrow\infty}\int_{\tilde\Omega\setminus B_r(x_0)}(|u(t_i)|^2+|\nabla d(t_i)|^2)dx\\
\leq&\lim_{r\rightarrow0}\liminf_{i\rightarrow\infty}\int_{\tilde\Omega\setminus B_{2R_i}(x_i)}(|u(t_i)|^2+|\nabla d(t_i)|^2)dx\\
\leq&\tilde{\mathcal E_1}(0)-\frac{\varepsilon_0^2}{2},
\end{align*}
where $\tilde\Omega=(-2\pi,2\pi)\times(-2\pi,2\pi)$ and $(u,d)$ is extended $2\pi$ periodically to the whole space.

\textbf{Step 2. Extend the local solution to be a global one.} Set $T_0=0$. In the same way as in Step 1, starting from $T_1$, we can extend $(u, d, \theta)$ to another time $T_2>T_1$, such that $(u, d, \theta)$ is weak solution in the time intervals $(0, T_1)\cup(T_1, T_2)$,
\begin{eqnarray*}
&&u\in L^\infty(0, T_2; L^2)\cap L^2(0, T_2; H^1)\cap C([0, T_2]; L^2_w),\\
&&d\in L^\infty(0, T_2; H^1)\cap L^2((0, T_1-\delta)\cup(T_1, T_2-\delta); H^2)\cap C([0, T_2]; H^1_w),\\
&&\theta\in L^\infty(0, T_2; L^1)\cap L^q(0, T_2; W^{1,q})\cap C([0,T_2]; W^{-1,q}_{per}),\quad q\in(1, \frac{4}{3}),
\end{eqnarray*}
for any $\delta>0$, the energy identity (\ref{T16.1}) holds true for $t\in[0, T_2)$, inequality (\ref{T16.4-1}) holds true with $T_1$ replaced by $T_2$, the entropy inequality (\ref{T16.1-1}) holds true in $\mathcal D'(\Omega\times(T_i, T_{i+1}))$ and identity (\ref{T16.4}) holds true for $T_i\leq s\leq t<T_{i+1}$, $i=0, 1$.

If $T_2=\infty$, then by Lemma \ref{T1lem7.1}, $(u, d, \theta)$ is a global weak solution. Otherwise, it has $0<T_2<\infty$. In this case, $T_2$ can be characterized by (\ref{C.1}) with $T_1$ replaced by $T_2$, and
$$
\tilde{\mathcal E_1}(T_2)\leq\tilde{\mathcal E_1}(0)-\varepsilon_0^2.
$$

In general, if we obtain $0<T_1<T_2<\cdots<T_k<\infty$, such that:

(i) $(u, d, \theta)$ is a weak solution in the time intervals $\cup_{i=0}^{k-1}(T_i, T_{i+1})$,
\begin{eqnarray*}
&&u\in L^\infty(0, T_k; L^2)\cap L^2(0, T_k; H^1)\cap C([0, T_k]; L^2_w),\\
&&d\in L^\infty(0, T_k; H^1)\cap L^2(\cup_{i=0}^{k-1}(T_i, T_{i+1}-\delta); H^2)\cap C([0, T_k]; H^1_w),\\
&&\theta\in L^\infty(0, T_k; L^1)\cap L^q(0, T_k; W^{1,q})\cap C([0,T_k]; W^{-1,q}_{per}),\quad q\in(1, \frac{4}{3}),
\end{eqnarray*}
for any $\delta>0;$

(ii) the energy identity (\ref{T16.1}) holds true for $t\in[0, T_k)$, inequality (\ref{T16.4-1}) holds true with $T_1$ being replaced by $T_k$, the entropy inequality (\ref{T16.1-1}) holds true in $\mathcal D'(\Omega\times(T_i, T_{i+1}))$ and identity (\ref{T16.4}) holds true for $T_i\leq s\leq t<T_{i+1}$, $i=0, 1,\cdots, k$;

(iii) $\tilde{\mathcal E_1}(t)$ is continuously decreasing on $[0, T_1)\cup[T_1, T_2)\cup\cdots\cup[T_{k-1}, T_k)$ and
$$
\tilde{\mathcal E_1}(T_k)\leq\tilde{\mathcal E_1}(0)-\frac{k}{2}\varepsilon_0^2.
$$
Then,  starting from $T_k$, we can extend $(u, d, \theta)$ to another time $T_{k+1}>T_k$ in the same way as in Step 1, such that (i) and (ii) hold true with $k$ being replaced by $k+1$. If $T_{k+1}=\infty$, then by Lemma \ref{T1lem7.1}, we have obtained the global weak solution. If $T_{k+1}<\infty$, then
$$
\tilde{\mathcal E_1}(T_{k+1})\leq \tilde{\mathcal E_1}(0)-\frac{k+1}{2}\varepsilon_0^2.
$$

Since $\tilde{\mathcal E_1}(0)$ is a finite number, there are at most finite many such $T_i$'s. Denote by $T_N$ the largest one of these $T_i$'s.
Then starting from $T_N$, we can extend $(u, d, \theta)$ to any finite time $T$ in the same way as Step 1. By Lemma \ref{T1lem7.1}, we have obtained the
global weak solution and proved all the conclusions stated in Theorem \ref{T1them1.2} except the $C^{\beta, \beta/2}$ regularity of $d$. While the
$C^{\beta, \beta/2}$ regularity of $d$ can be shown in the same way as in Lemma \ref{T1lem4.3} as long as we observe that $u\in L^4(Q_T)$,
$\nabla d\in L^4(\Omega\times[T_i, T_{i+1}-\delta))$ for any $\delta>0$ and $i=0, 1, \cdots, N-1$ and $\nabla d\in L^4(\Omega\times[T_N, T))$ which guarantee
the smallness condition used in Lemma \ref{T1lem4.3}. Thus the proof is completed.

\section*{Acknowledgments}
{This research is supported
partially by Zheng Ge Ru Foundation, Hong Kong RGC Earmarked
Research Grants CUHK4041/11P and CUHK4048/13P, a Focus Area Grant from The
Chinese University of Hong Kong, and a grant from Croucher Foundation. }
\par

\end{document}